\theoremstyle{plain}
\newtheorem{theorem}{Theorem}[section]
\newtheorem{cor}[theorem]{Corollary}
\newtheorem{prop}[theorem]{Proposition}
\newtheorem{lemma}[theorem]{Lemma}
\newtheorem{definition}[theorem]{Definition}
\theoremstyle{definition}
\newtheorem{ex}[theorem]{Example}
\newtheorem{rmk}[theorem]{Remark}
\numberwithin{equation}{section}
\newtheorem*{theoremA*}{Theorem A}
\newtheorem*{theoremB*}{Theorem B}
\newtheorem*{theoremm1*}{Theorem A'}
\newtheorem*{theoremC*}{Theorem C}
\newtheorem*{theoremD*}{Theorem D}
\newtheorem*{theoremE*}{Theorem E}
\newtheorem*{theoremF*}{Theorem F}
\newtheorem*{theoremE2*}{Theorem E2}
\newtheorem*{theoremE3*}{Theorem E3}
\newcommand{\bs}{\backslash}
\newcommand{\C}{\mathbb{C}}
\newcommand{\Pc}{\mathcal{P}}
\newcommand{\R}{\mathbb{R}}
\newcommand{\N}{\mathbb{N}}
\newcommand{\Aut}{\operatorname{Aut}}
\newcommand{\Gr}{\operatorname{Gr}}
\newcommand{\Sl}{\operatorname{SL}}
\newcommand{\SO}{\operatorname{SO}}
\newcommand{\GL}{\operatorname{GL}}
\newcommand{\Ad}{\operatorname{Ad}}
\newcommand{\ad}{\operatorname{ad}}
\newcommand{\diag}{\operatorname{diag}}
\newcommand{\Span}{\operatorname{span}}
\newcommand{\rank}{\operatorname{rank}}
\newcommand{\Hfx}{H_f^\times}
\newcommand{\hfx}{\hf_f^\times}
\def\hat{\widehat}
\def\af{\mathfrak{a}}
\def\jf{\mathfrak{j}}
\def\ifr{\mathfrak{i}}
\def\gf{\mathfrak{g}}
\def\cf{\mathfrak{c}}
\def\df{\mathfrak{d}}
\def\hf{\mathfrak{h}}
\def\kf{\mathfrak{k}}
\def\lf{\mathfrak{l}}
\def\mf{\mathfrak{m}}
\def\nf{\mathfrak{n}}
\def\pf{\mathfrak{p}}
\def\qf{\mathfrak{q}}
\def\uf{\mathfrak{u}}
\def\zf{\mathfrak{z}}
\def\la{\langle}
\def\ra{\rangle}
\def\1{{\bf1}}
\def\P{\mathcal{P}}
\def\oline{\overline}
\title[Simple compactifications]
{Simple compactifications and polar decomposition of homogeneous real spherical spaces}
\subjclass[2000]{14M27, 22F30, 22E15}
\keywords{spherical space, polar decomposition}
\begin{document}
\date{May 29, 2014}

\begin{abstract}
Let $Z$ be an algebraic  homogeneous space $Z=G/H$ attached to 
real reductive Lie group $G$.
We assume that $Z$ is real spherical, i.e., minimal parabolic subgroups 
have open orbits on $Z$.  For such spaces we investigate 
their large scale geometry  and provide a polar decomposition. 
This is obtained from the existence of simple compactifications  of $Z$ which 
is established in this paper.
\end{abstract}

\author[Knop]{Friedrich Knop}
\email{friedrich.knop@fau.de}
\address{Department Mathematik, Emmy-Noether-Zentrum\\
FAU Erlangen-N\"urnberg, Cauerstr. 11, 91058 Erlangen, Germany} 
\author[Kr\"otz]{Bernhard Kr\"{o}tz}
\email{bkroetz@math.uni-paderborn.de}
\address{Universit\"at Paderborn, Institut f\"ur Mathematik\\Warburger Stra\ss e 100, 
D-33098 Paderborn, Germany}
\thanks{The second named author was supported by ERC Advanced Investigators Grant HARG 268105}
\author[Sayag]{Eitan Sayag}
\email{eitan.sayag@gmail.com}
\address{Department of Mathematics, Ben Gurion University of the Negev\\P.O.B. 653, Be'er Sheva 84105
, Israel}
\author[Schlichtkrull]{Henrik Schlichtkrull}
\email{schlicht@math.ku.dk}
\address{University of Copenhagen, Department of Mathematics\\Universitetsparken 5, 
DK-2100 Copenhagen \O, Denmark}

\maketitle

\section{Introduction}

Our concern is the large scale geometry of algebraic homogeneous spaces $Z=G/H$
attached to an algebraic  real reductive group $G$. Here $H<G$ is an algebraic subgroup. 

\par One approach towards the large scale geometry of $Z$ is to study the double-coset space 
$K\bs G/H$ for $K<G$ a maximal compact subgroup.   
In interesting cases, for example if $H$ is a symmetric subgroup which is compatible with the choice of 
$K$, i.e., $H\cap K$ is maximal compact in $H$, then 
there is a good answer in terms of the  generalized Cartan decomposition for 
$Z=G/H$:  there is a non-compact  torus $A_q$ with 
Lie algebra orthogonal to $\hf +\kf$ such that $G=KA_qH$. 
Here $\hf$ and $\kf$ denote the Lie algebras of $H$ and $K$.

\par The class of homogeneous spaces $Z$ we consider in this paper are those which are 
called {\it real spherical}, i.e., minimal parabolic subgroups of $P$ admit open orbits.
Symmetric spaces are real spherical and basic properties of symmetric spaces 
have been shown to persist in the larger class of real spherical spaces
(see \cite{Bien},  \cite{KSS}, \cite{KS1}, \cite{KKS}, \cite{KS2}).

\par The objective of this paper is to study the large scale geometry of real spherical spaces. 
In the past we looked at many non-symmetric examples and constructed 
non-compact tori $\af$ such that $G=KAH$ holds true, but could not find a general 
construction scheme, see \cite{KSS}, \cite{DKS}. In contrast to symmetric spaces the tori $\af$ are typically 
not orthogonal to $\kf +\hf$ which makes matters rather complicated.

\par In order to discuss the large scale geometry of spherical spaces it thus seems reasonable 
to weaken the concept of the polar decomposition $G=KAH$ and replace $K$ by a compact subset
$\Omega \subset G$. This approach is motivated by the investigations in 
\cite{SV} for a class of p-adic spherical 
spaces.   

\par By definition a  minimal parabolic subgroup $P<G$ is given by $P=G\cap P_\C$ where $P_\C <G_\C$
is a minimal parabolic of $G_\C$ which is defined over $\R$. 
Here $G_\C$ denotes the complexification of $G$ which is a complex reductive algebraic group.

One main geometric result of this paper then is:

\begin{theorem}\label{mainthm}[Polar Decomposition] Let $Z=G/H$ be a real spherical space
attached to an algebraic real reductive group.  Let 
$P$ be a minimal parabolic subgroup of $G$ such that $PH$ is open. 
Then there is a Levi decomposition $P=MA\ltimes N$ 
such that 
$$G=\Omega A F H $$
for a compact set $\Omega\subset G$ and a finite set $F\subset G$. 
\end{theorem}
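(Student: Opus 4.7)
The plan is to derive the polar decomposition from the existence of a simple $G$-equivariant compactification $\hat Z\supset Z$, which is the central algebro-geometric construction of the paper. Recall that \emph{simple} means $\hat Z$ carries a unique closed $G$-orbit $Y$. Write $z_0=eH$; the openness of $PH$ says that $Pz_0$ is open (hence dense) in $Z$, and by the Iwasawa decomposition $G=KP$ with $K$ compact it suffices to produce a decomposition of $Pz_0$ of the form $\Omega_P\cdot AFz_0$ for some compact $\Omega_P\subset P$ and finite $F\subset G$.

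The first step is a local structure theorem along $Y$. Pick $y_0\in Y$; since $\overline{Pz_0}=\hat Z\supset Y$, the point $y_0$ is accessible from the open $P$-orbit. The local structure theorem for spherical embeddings then provides a $P$-stable affine open neighborhood $\U\ni y_0$, an $A$-equivariant retraction $\U\to Y\cap\U$, and a slice whose monoid of $A$-weights is dual to the strictly convex cone $\cc\subset X^*(A)\otimes\R$ defining $\hat Z$. Passing to real points yields a covering
\[
\U\cap Z=\bigcup_{f\in F}\Omega_0\cdot A^-\cdot fz_0,
\]
where $A^-\subset A$ is the closed submonoid attached to $\cc$, $\Omega_0\subset P$ is compact, and the finite set $F\subset G$ records the real components of the slice fiber through $y_0$.

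The second step is a globalization by compactness. Let $C:=Z\setminus\bigcup_{f\in F}\Omega_0A^-fz_0$. One shows that $C$ is relatively compact in $Z$: any sequence $(z_n)\subset C$ leaving every compact subset of $Z$ accumulates, by compactness of $\hat Z$, on the boundary $\partial Z=\hat Z\setminus Z$, and by simplicity the $G$-saturation of any such limit contains $Y$; after passing to a subsequence, applying bounded $P$-translations and then $A^-$-contractions, the sequence must eventually enter $\Omega_0A^-fz_0$ for some $f\in F$, contradicting $z_n\in C$. Hence there exists a compact $\Omega'\subset P$ with $C\subset\Omega'z_0$, and setting $\Omega_P:=\Omega_0\cup\Omega'$, $\Omega:=K\Omega_P$ produces $G=\Omega AFH$.

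The main obstacle is the local structure step along $Y$. In the symmetric case the slice at $y_0$ is orthogonal to $\hf+\kf$ and $F$ can be taken trivial, recovering the classical generalized Cartan decomposition. For general real spherical spaces the tori are tilted, and the real form of the slice may genuinely split into several $A^-$-orbits; controlling these real components is what forces the appearance of the finite set $F$ and constitutes the technical heart of the argument. The role of the simple compactification is precisely to reduce the global polar decomposition on $Z$ to this essentially finite, torus-theoretic local question.
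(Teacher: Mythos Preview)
Your overall strategy—simple compactification, local structure near the closed orbit $Y$, then globalize by compactness—is indeed the paper's, but your globalization step contains a genuine gap. You claim $C=Z\setminus\bigcup_{f}\Omega_0A^-fz_0$ is relatively compact by arguing: if $z_n\to z_\infty\in\partial Z$ then, since $\overline{Gz_\infty}\supset Y$, ``bounded $P$-translations and $A^-$-contractions'' force $z_n$ into $\Omega_0A^-Fz_0$. But the inclusion $\overline{Gz_\infty}\supset Y$ points the wrong way: it says one can degenerate \emph{from} $z_\infty$ \emph{toward} $Y$, not that $z_\infty$ (or $z_n$) lies in your chart $\U$. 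When $\dim\af_Z>1$ the boundary $\partial Z$ contains intermediate $G$-orbits besides $Y$, and $z_\infty$ may lie on one of these, far from $y_0$; translating $z_n$ by some $p_na_n$ says nothing about $z_n$ itself unless one can invert, and $a_n^{-1}\notin A^-$. Relatedly, the claimed \emph{equality} $\U\cap Z=\bigcup_f\Omega_0A^-fz_0$ with $\Omega_0$ compact is not what the local structure theorem delivers: the paper's slice gives $\hat Z_0\cap Z=U\cdot S_Z'$ with $U$ the (non-compact) unipotent radical of $Q$.

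The paper does not try to show such a complement is compact. Its key technical step (Lemma~\ref{lemma2}), which you omit, is that $\Phi:K\times S_Z\to\hat Z$ is \emph{open} at the $\oline Q$-fixed point $\hat z$; this is delicate because $\hat z$ need not be a smooth point of the slice, and the argument passes through the ambient linear slice in $\Pb(V_f)$. Since $K$ is transitive on $Y\simeq G/\oline Q$, $K$-equivariance then promotes this to a genuine open neighborhood $KS_Z^1$ of all of $Y$ in $\hat Z$. Now one uses simplicity in the correct direction: every $z\in\hat Z$ has $Y\subset\overline{Gz}$, hence some $g_z^{-1}z$ lies in the open set $KS_Z^1$; compactness of $\hat Z$ yields finitely many $g_1,\dots,g_N$ with $\hat Z=\bigcup_j g_jKS_Z^1$, and intersecting with $Z$ gives $G=F'KA_ZFH$ with $F'=\{g_j\}$, i.e.\ $\Omega=F'K$. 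Your Iwasawa reduction $G=KP$ is precisely an attempt to avoid this outer finite set $F'$, and that is where the argument cannot close. Two further remarks: the simple compactification is only constructed for $G/J$, so one must first reduce to $H=J$ via Theorem~\ref{J-cpct}, a step you skip; and the contracting semigroup $A^-$ (i.e.\ $A_Z^-$) enters only in the \emph{refined} decomposition of Section~\ref{section cone}, whereas Theorem~\ref{mainthm} itself is with the full torus $A_Z$.
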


\par The main new tool for deriving the polar decomposition 
is the existence of a {\it simple compactification}.
The definition is given in Section \ref{simple c}.
Most importantly, a simple compactification has a unique closed orbit, and
thus the following result is obtained in Section \ref{S:exist simple c}.

\begin{theorem}\label{mainthm2}
Let $Z=G/H$ be a real spherical space and assume  that
$H$ is equal to its normalizer, $H=N_G(H)$.
Let the subgroup 
$J<G$ be defined by $$J=\{g\in G\mid P_\C H_\C g=P_\C H_\C\}.$$
Then there exists a compact subgroup $M_J$ of $J$ such that $J=M_JH$. 
In particular, $J/H$ is compact.

Furthermore, there exists an  irreducible 
rational real representation $V$ of $G$ with $J$-fixed vector $v_J$ such that 
$$ Z_J:=G/J \to \mathbb{P}(V), \ \ gH \mapsto [g\cdot v_J]$$
is an  embedding, and such that the closure of $Z_J$ in $\mathbb{P}(V)$ is a $G$-compactification 
of $Z_J$ with a unique closed $G$-orbit.
\end{theorem}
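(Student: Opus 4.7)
The argument divides naturally into the group-theoretic statement about $M_J$ and $J/H$, and the representation-theoretic construction of the embedding. For the first statement, the first step is to complexify: real sphericality means $\pf + \hf = \gf$, and complexifying gives $\pf_\C + \hf_\C = \gf_\C$, so $P_\C H_\C$ is open in $G_\C$ (i.e.\ $Z_\C = G_\C/H_\C$ is complex spherical). Write $\hat H_\C := \{g \in G_\C : P_\C H_\C g = P_\C H_\C\}$, an algebraic subgroup of $G_\C$ containing $H_\C$, so that $J = \hat H_\C \cap G$. I would next appeal to the standard fact from spherical geometry that $\hat H_\C/H_\C$ is finite under the hypothesis $H = N_G(H)$; morally $\hat H_\C$ is the spherical closure of $H_\C$ (the subgroup fixing all $B_\C$-semi-invariant rational functions on $Z_\C$), which has finite index over $H_\C$ for algebraic spherical subgroups in characteristic zero. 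This yields $J/H$ finite, so in particular $J^0 = H^0$. To construct $M_J$, pick a maximal compact subgroup of the real algebraic group $J$; since $J$ has finitely many connected components, Mostow's structure theorem guarantees that $M_J$ meets every component of $J$, giving $M_J J^0 = J$ and hence $M_J H = J$.

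For the second statement, the plan is to build $V$ and $v_J$ using representation theory tailored to $J$. Chevalley's theorem provides a rational $G$-representation $W$ over $\R$ and a line $L = \R w \subseteq W$ with $J = \{g \in G : gL = L\}$. To upgrade from a line stabilizer to a vector stabilizer, pass to $\operatorname{End}(W) \cong W \otimes W^*$: the rank-one projection $\pi_L$ is fixed by $J$ under conjugation, and its $G$-stabilizer is exactly $J$, since an idempotent preserved up to scalar is preserved exactly. To secure irreducibility, decompose $\operatorname{End}(W)$ as a $G$-module and take $V$ to be a well-chosen irreducible constituent; each component of $\pi_L$ in each summand is automatically $J$-fixed. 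The cleanest way to make this selection is via the framework of simple compactifications from Section~\ref{simple c}: choose a dominant weight $\lambda$ coming from a $B_\C$-semi-invariant rational function on $Z_\C$ whose stabilizer is exactly $J$, and take $V = V_\lambda$ the irreducible $G$-representation with highest weight $\lambda$; the construction then furnishes a canonical $J$-fixed vector $v_J \in V$ whose $G$-line-stabilizer is $J$.

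It remains to check that $\overline{G \cdot [v_J]} \subseteq \mathbb{P}(V)$ is a $G$-compactification of $Z_J$ with a unique closed orbit. Projectivity makes the closure compact and $G$-invariant; any algebraic $G$-orbit in a projective $G$-variety is locally closed, so $Z_J$ embeds as an open subset of its closure. Uniqueness of the closed orbit then follows from the irreducibility of $V$: the only closed $G$-orbit in $\mathbb{P}(V)$ is the orbit through the highest-weight line $[v_\lambda]$, and any $G$-invariant closed subvariety of $\mathbb{P}(V)$ contains a closed orbit. The hardest part will be the middle paragraph---simultaneously securing irreducibility, the existence of a $J$-fixed vector, and the exact equality $\operatorname{Stab}_G(\R v_J) = J$. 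This representation-theoretic core is where the fine structure of real spherical spaces, captured by the theory of simple compactifications, enters essentially.
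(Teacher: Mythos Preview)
Your argument for the first part rests on the claim that $J/H$ is finite, and this is false. The paper itself supplies a counterexample (see the example following Lemma~\ref{J lemma}): take $G=\SO(1,n)$ with $n\ge 3$, $P=MAN$ minimal parabolic, $M'\subset M=\SO(n-1)$ any proper subgroup acting transitively on $S^{n-2}$ (e.g.\ $M'=\Unitary(k)$ inside $\SO(2k)$), and $H=M'A$. Then $H$ is self-normalizing and spherical, yet $J=MA$, so $J/H\cong M/M'$ has positive dimension. The theorem only asserts that $J/H$ is \emph{compact}, and your identification of $\hat H_\C$ with the spherical closure of $H_\C$ is incorrect: the right stabilizer of $P_\C H_\C$ is the stabilizer of the union of all colors, not of each color separately, and it can strictly contain the spherical closure.

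The paper's route to $J=M_JH$ is substantially more delicate. The key intermediate step is Corollary~\ref{J-cor}: one shows $H_{\rm n}=J_{\rm n}$, by proving (Lemma~\ref{J-inv}) that $J_{{\rm n},\C}$ preserves $\C[Z_\C]$ and hence lies in $N_{G_\C}(H_\C)$, combined with the fact that $N_{G_\C}(H_\C)/H_\C$ carries no unipotents. Once $J_{\rm n}=H_{\rm n}\subset H\subset J$ is established, one writes $J=M_J Z_1$ with $M_J$ compact and $Z_1$ the preimage of the center of $J/J_{\rm n}$; any $g\in Z_1$ then normalizes $H$ (since conjugation by $g$ moves each $h\in H$ inside $hJ_{\rm n}\subset H$), so $Z_1\subset N_G(H)=H$ and $J=M_JH$. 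Your Mostow argument does not survive the failure of $J^0=H^0$: knowing only $M_JJ^0=J$ gives no control over $M_JH$ when $H^0\subsetneq J^0$.

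For the second part your outline is in the right spirit but remains a sketch at the crucial point. The paper's construction is concrete: take $f\in\mathcal{P}_{++}$ with $\Hfx=J$ (such $f$ exists by Lemma~\ref{H_f=J}), let $V=V_f$ be the real $G$-span of $f$ under right translation, and set $v_J=f$. Irreducibility of $V_f$, the fact that the line stabilizer of $[f]$ is exactly $J$, and the fact that the highest-weight ray of $V_f^*$ has stabilizer $Q$ (Lemma~\ref{rmk Q}) all come for free from the defining property (\ref{P++ cond}) of $\mathcal{P}_{++}$. Your Chevalley-then-decompose approach does not obviously deliver all three simultaneously, and ``choose a dominant weight $\lambda$ coming from a $B_\C$-semi-invariant with stabilizer exactly $J$'' is precisely the content that needs to be supplied rather than invoked.
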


\par The proof of Theorem \ref{mainthm} follows in Section \ref{pode}.
In Section \ref{section cone} we define the compression (or valuation) cone  
of $Z$. 
We show that the compression cone governs the fine convex geometry near
the closed orbit of a simple compactification and results in a refined polar decomposition,
Theorem \ref{generic RPD}. 
Finally, following Sakellaridis and Venkatesh
\cite{SV} we define a class of real spherical 
spaces which satisfy the wavefront lemma of Eskin-McMullen \cite{EM}. 

\par{\it Acknowledgement:} We thank the anonymous referee for useful suggestions
which led to an improvement of our paper. 

\section{Real spherical spaces}

\subsection{Notation on real spherical spaces}\label{notation}

We will denote Lie groups by upper case Latin letters, e.g $A$, $B$ etc., 
and their Lie algebras by lower case  German  letters, e.g. $\af$, $\mathfrak b$ etc.  

\par Let $G$ be an algebraic  real reductive group by which we understand 
an open subgroup of the real points of a connected complex 
reductive algebraic group $G_\C$.
Further  we let $H<G$ be a closed
subgroup such that there is a  complex algebraic subgroup 
$H_\C <G_\C$ such that $G\cap H_\C=H$. Under these assumptions 
we refer to $Z=G/H$ as a real algebraic homogeneous space.  We set $Z_\C=G_\C/H_\C$ 
and note that there is a natural $G$-equivariant embedding 
$$Z\hookrightarrow Z_\C , \ \ gH \mapsto gH_\C\, .$$
Let us denote by $z_0=H$ the standard base point of $Z$.
We denote by $\C[G_\C]$ the ring of regular functions on $G_\C$.

\par Denote by $\P$ the variety of all minimal parabolics. Then
$\P\simeq G/P$ for any given $P\in\P$.
In this paper we will assume that $Z$ is {\it real spherical}, 
i.e.,~some, and hence all, $P\in\P$ admit an open orbit on $Z$.

\subsection{Embeddings}\label{Embeddings} 
It follows from a theorem of Chevalley that every real algebraic homogeneous space $Z=G/H$ admits a $G$-equivariant embedding 
into the projective space of a rational $G$-module $V$, i.e.,~there is  a vector 
$0\neq v_H\in V$ such that $H$ is the stabilizer of the line $[v_H]\in\mathbb{P}(V)$.
Then the map 
\begin{equation}\label{projective embedding}
 Z =G/H \to \mathbb{P}(V), \ \ gH \mapsto [\pi(g)v_H]
\end{equation}
is a $G$-equivariant embedding. 

It can sometimes be useful to reduce matters to the quasi-affine situation 
of an embedding into $V$. This is achieved with the following standard trick. 
For every algebraic character $\chi: H \to \R^\times$ we set
\begin{equation}\label{reduction quasi-affine}
G_1 := G\times \R^\times,\qquad H_{1,\chi}:=\{ (h, \chi^{-1}(h))\in G_1\mid h\in H\},
\end{equation}
then $Z_{1,\chi}= G_1/H_{1,\chi}$ is a real spherical space.
In particular, if $V$ and $v_H$ are as above,
and if $\chi: H \to \R^\times$ is given by $\pi(h) v_H = \chi(h) v_H$, then
$$(g,t)H_{1,\chi}\mapsto t\,\pi(g)v_H, \qquad
Z_{1,\chi}:= G_1/H_{1,\chi} \to V$$
is an embedding.

\subsection{The local structure theorem}\label{lst}

Let $P\in\P$ be such that $PH$ is open in $G$, that 
is $\gf=\pf +\hf$.

\par If $\lf$ is a real reductive Lie algebra then we denote by $\lf_{\rm n}$, reps. $\lf_{\rm c}$  the union of the 
non-compact, resp. compact,  simple ideals of $\lf$. Note that 
$$\lf = \zf(\lf) \oplus \lf_{\rm n} \oplus \lf_{\rm c}$$
is a direct sum of reductive Lie algebras.

\par According to the local structure theorem of \cite[Thm.~2.2]{KKS} there exists a 
parabolic subgroup $Q\supset P$ 
with Levi decomposition $Q=LU$ such that:

\begin{itemize}
\item $Q\cdot z_0 = P\cdot z_0$ and
\item   $L_{\rm n} < Q\cap H < L$ with $L_n =\la \exp \lf_{\rm n}\ra$.
\end{itemize}

Hence on the level of Lie algebras we have 
\begin{equation}\label{deco} \lf_{\rm n}\subset \qf \cap \hf \subset \lf\, .\end{equation}
A parabolic subgroup $Q$ with the properties listed above is said to be $Z$-{\it adapted}. It is shown 
in \cite[Thm.~2.7]{KKS} that only one parabolic subgroup
containing $P$ is $Z$-adapted .

\par We let $K_L A_L N_L=L$ be an Iwasawa decomposition of $L$ and set 
$A:=A_L$. This way, we obtain a Levi decomposition $P=MAN= MA \ltimes N$ where $M=Z_{K_L}(A)$
and $N=N_LU$. 

\par We let 
$$\zf(\lf)= \zf(\lf)_{np} \oplus \zf(\lf)_{cp}$$
be the decomposition into compact and non-compact part, i.e., 
$\zf(\lf)_{cp}$ is the Lie algebra of the maximal compact 
subgroup of the abelian Lie group $Z(L)$ and $\zf(\lf)_{np} $ is its orthocomplement. 

Let $\df:=\zf(\lf) + \lf_{\rm c}$.
As there is no algebraic homomorphism of a non-compact torus 
into a compact group we obtain that every algebraic subalgebra $\cf$ of $\df$
decomposes
$\cf= [\cf\cap\zf(\lf)_{np} ]\oplus [\cf\cap (\zf(\lf)_{cp} +\lf_c)]$.
As $\hf$ is algebraic we thus get  
$$\hf \cap \lf = \lf_{\rm n} + \af_h +\mf_h $$
with $\af_h\subset\zf(\lf)_{np}$  the image of 
$\hf \cap \lf$  under the orthogonal projection $\lf\to \zf(\lf)_{np}$ and likewise for 
$\mf_h < \zf(\lf)_{cp}+\lf _{\rm c}$.
Let $\af_Z\subset \zf(\lf)_{np}$ be the orthogonal complement of $\af_h$ and $\mf_Z$ 
the orthogonal complement to $\mf_h $ in $\lf_c +\zf(\lf)_c$.
Accordingly the following direct sum holds
\begin{equation}\label{deco2} \gf =\hf \oplus \af_Z \oplus \mf_Z \oplus \uf\, .\end{equation}

\subsection{The open $P$-orbits in $Z=G/H$}\label{open orbits}

Let $P\in\P$ be such that $PH$ is open in $G$ and let $Q\supset P$ be $Z$-adapted.
Our goal in this subsection is to describe all open $P$-orbits on $Z=G/H$. 

Recall that the local structure theorem asserts that $PH=QH$ and that there is an algebraic diffeomorphism

\begin{equation} \label{LSTglob} U \times S \to P\cdot z_0\end{equation} 
where $S=L\cdot z_0$ is a homogeneous space for the group $D:= L/L_{\rm n}$.
As $L$ is reductive we can and will assume that $D<L$. 
Note that the Lie algebra $\df = \zf(\lf) +\lf_{\rm c}$ is compact and contained 
in $\af +\mf$ (recall that a Lie algebra 
is called compact if it is isomorphic to the Lie algebra of a compact Lie group).

\begin{lemma}\label{open orbits lemma} 
$P_\C \cdot z_0 \cap Z$ is the union of the open $P$-orbits in $Z$.
\end{lemma}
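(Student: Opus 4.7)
The plan is to transfer the lemma to a statement about $(P_\C, H_\C)$-double cosets in $G_\C$ via the natural embedding $Z = G/H \hookrightarrow Z_\C$, $gH \mapsto gH_\C$. Under this embedding, $P_\C \cdot z_0 \cap Z$ corresponds to $(G \cap P_\C H_\C)/H$, which is automatically a $P$-invariant open subset of $Z$ (the set $P_\C H_\C$ being open in $G_\C$ by complex sphericity). Thus the lemma reduces to the following equivalence for $g \in G$:
\[
PgH \text{ is open in } G \iff g\in P_\C H_\C.
\]

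For the direction $\Leftarrow$, I would use the standard tangent-space criterion: $PgH$ is open in $G$ iff $\pf + \Ad(g)\hf = \gf$. Since $g\in G$, the subspace $\Ad(g)\hf$ is a real form of $\Ad(g)\hf_\C$, and $\pf$ of $\pf_\C$, so this real identity is equivalent to its complexification $\pf_\C + \Ad(g)\hf_\C = \gf_\C$. If $g = p_0 h_0$ with $p_0\in P_\C$ and $h_0\in H_\C$, then $\Ad(g)\hf_\C = \Ad(p_0)\hf_\C$, and combining $\Ad(p_0)\pf_\C = \pf_\C$ with the sphericity identity $\pf_\C + \hf_\C = \gf_\C$ produces $\pf_\C + \Ad(g)\hf_\C = \Ad(p_0)(\pf_\C + \hf_\C) = \gf_\C$, as required.

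For the direction $\Rightarrow$, complexifying the same tangent-space criterion shows that if $PgH$ is open in $G$ then $P_\C g H_\C$ is open in $G_\C$. Since $G_\C$ is a connected complex algebraic group, it is irreducible as an algebraic variety, so any two non-empty Zariski-open subsets meet; in particular $P_\C g H_\C \cap P_\C H_\C \neq \emptyset$. As distinct double cosets are disjoint, this forces $P_\C g H_\C = P_\C H_\C$, whence $g\in P_\C H_\C$. I expect this last step to be the conceptual heart of the argument: while there may be several open $P$-orbits on the real space $Z$, they all lift to the \emph{unique} open $(P_\C, H_\C)$-double coset in the irreducible complex variety $G_\C$, which is essentially what the lemma records. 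The only subtlety to keep an eye on is that the tangent-space criterion really does pass between $\gf$ and $\gf_\C$ in both directions, which relies on $P_\C$ and $H_\C$ being defined over $\R$ with $P$ and $H$ as their real points.
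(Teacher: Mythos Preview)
Your proof is correct and follows essentially the same approach as the paper: both arguments hinge on the tangent-space criterion for openness passing back and forth between $\gf$ and $\gf_\C$, together with the uniqueness of the open $P_\C$-orbit (equivalently, the open $(P_\C,H_\C)$-double coset) coming from irreducibility. The only cosmetic difference is that the paper phrases everything in terms of orbits in $Z$ and $Z_\C$ rather than double cosets in $G$ and $G_\C$, and handles both implications at once via the single equivalence $\Ad(g)^{-1}\pf+\hf=\gf \Leftrightarrow \Ad(g)^{-1}\pf_\C+\hf_\C=\gf_\C$.
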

\begin{proof}  
We first note that 
$P_\C  \cdot z_0$ is Zariski open, hence dense and thus the unique 
open $P_\C$-orbit in $Z_\C$.  Let now $g\in G$ and $z=g\cdot z_0\in Z$.
Then $\Ad(g)^{-1}\pf + \hf=\gf$ if and only if
$\Ad(g)^{-1} \pf_\C +\hf_\C =\gf_\C$, and hence
$P\cdot z$ is open in $Z$ if and only if $P_\C \cdot z$ is open in $Z_\C$.
The lemma follows immediately.
\end{proof}

The local structure theorem was obtained through the use of a $P$-semi-invariant regular function 
$f$ on $G$ (see also Section \ref{SCII}
where we review this in more detail). If we view $f$ as a regular 
function on $G_\C$ we obtain the complex version of the local structure theorem:   
with $Q_\C$ the Zariski closure of $Q$ and $U_\C:= \exp(\uf_\C)$  we obtain 
a parametrization of the open $P_\C$-orbit $P_\C\cdot z_0\subset Z_\C$: 
\begin{equation} \label{LSTglob2}U_\C \times S_\C  \to P_\C \cdot z_0\, .\end{equation}
The slice $S_\C$ is described as follows: With $L_\C<Q_\C$ the Levi part with $L_\C \supset L$ and 
$L_{n,\C}=\la \exp (\lf_{n,\C})\ra$
we obtain that $S_\C=L_\C\cdot z_0$ is a homogeneous space for the group $D_\C:= L_\C / L_{n,\C}$.

\par Note that the number of open $P$-orbits in $Z$ is finite.  
In view of (\ref{LSTglob2}) we have
$(P_\C\cdot z_0)(\R)= U \times S_\C(\R)$.  Hence all open $P$-orbits 
are given by 
\begin{equation} \label{allorbits}  P t_1\cdot z_0, \ldots, Pt_m \cdot z_0\end{equation}
where $t_j \in \exp(i \df)$ (see also (\ref{tj})).  
It is no loss of generality to assume that $t_1=\1$.
In particular we find $e_j\in G$ and $h_j \in H_\C$ such that $t_j= e_j h_j$. 
Set 
\begin{equation}\label{finite set}  F:= \{e_1, \ldots, e_m\}\end{equation} 
and note that 
\begin{equation}
\label{fs2} P g H \subset G \ \hbox{is open for all $g\in F$}\, .\end{equation}

\subsection{Explicit structure of the slice $S$}

As $D$ is an algebraic group we have 

$$D = Z(L)_{np} \times D_c $$
with $D_c$ a compact subgroup with Lie algebra $\zf(\lf)_c + \lf_c$
and $Z(L)_{np}:= \exp \zf(\lf)_{np}$. With $C<D$ the stabilizer of $z_0$ in 
$D$ we obtain likewise that 

$$C = A_h \times M_h$$
with $M_h$ a compact subgroup of $D_c$ with Lie algebra $\mf_h$. As $Z(L)_{np} = A_Z \times A_h$ we conclude that 
$$ S = D/C \simeq A_Z \times D_c/ M_h\, .$$
Set $M_Z:= D_c/M_h$.
In particular, in (\ref{allorbits}) one can arrange that
\begin{equation}\label{tj}
t_j\in T_Z:=\exp(i\af_Z),\quad(j=1,\dots,m).\end{equation}

\section{Simple compactifications}\label{simple c}

In the sequel we use the term $G$-space for a topological space endowed with a continuous $G$-action.

\par By a {\it compactification} of $Z=G/H$ we understand a compact $G$-space $\hat Z$ 
such that 
\begin{itemize}
\item $\hat Z\supset Z$ as $G$-space.
\item $Z$ is open dense in $\hat Z$.
\end{itemize}

Compactifications of real spherical spaces exist: Recall from (\ref{projective embedding})
the $G$-equivariant embedding in $\mathbb{P}(V)$.  
\par According to \cite{KKS} the closure $\hat Z$  of $Z$ in $\mathbb{P}(V)$ 
has a finite orbit decomposition. However, the orbit structure of such an embedding can be quite complicated, in particular 
it can happen that there are closed $G$-orbits of different orbit type. 

\begin{ex} We consider $G=\Sl(2,\R)$ with $H=N$. Then $G/H\simeq \R^2\bs\{0\}$ which we realize 
in $\mathbb{P}(V)$ where $V=\R^2\oplus\R$
via $v\mapsto [(v, 1)]$. The closure of $Z$ in $\mathbb{P}(V)$ consists of 
$Z$ and two closed orbits: The $G$-fixed point $[(0,1)]$ and the orbit
$\mathbb {P}(\R^2) \simeq G/P$ through $[(v_N,0)]$ where $v_N\in\R^2$ is $N$-fixed. 
\end{ex}

\par The goal of this section is to construct more suitable compactifications with a simple structure of the closed orbits.

\par Recall  the parabolic $Q=LU \supset P$ which we 
attached to $Z$.  In the sequel it is convenient to choose a Cartan involution $\theta$ on $G$ such that 
$L$ is $\theta$-stable. 
The opposite parabolic to $Q$ is then defined by $\oline Q:=\theta(Q) = L \oline U$ with $\oline{U}=\theta(U)$. 

\par If $V$ is a finite dimensional real $G$-module, then we denote by $V^*$ its dual.  We choose an inner product 
$\la\cdot,\cdot\ra$ on $V$ which is $\theta$-covariant, that is $\la g \cdot v, w \ra = \la v, \theta(g)^{-1}\cdot w\ra$ holds 
for all $g\in G$, and $v, w\in V$.  We consider the linear identification 
$$ V\to V^* , \ \ v\mapsto v^*:=\la \cdot, v\ra\, $$ 
and observe that the dual representation $V^*$ can be realized on $V$ but with the twisted $G$-action 
$g* v:=\theta(g)\cdot v$. In particular we see that the ray $\R^+ v\subset V $ 
is stabilized by $\oline Q$ if and only 
if $\R^+ v^*\subset V^*$ is stabilized by $Q$.

\par We call a compactification $\hat Z$ {\it simple} provided that there 
exists only one closed $G$-orbit $Y\subset \hat Z$ and $Y\simeq G/\oline Q$
as $G$-space. 

\par Simple compactifications arise in the following context.

\begin{lemma} \label{c1-lem} Let $V$ be an irreducible finite dimensional real rational $G$-representation with 
the following properties:
\begin{enumerate}
\item There is a non-zero vector $v_H\in V$ such that the stabilizer of the line $\R v_H$ is $H$.
\item There is a non-zero vector $v\in V$ for which the stabilizer of the ray $\R^+ v$ is $\oline Q$. 
\end{enumerate}
Then the closure $\oline{G\cdot [v_H]} \subset \mathbb{P}(V)$ is a simple compactification 
of $Z=G/H$ with closed orbit $Y=G\cdot[v]$. 
\end{lemma}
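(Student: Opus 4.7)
The plan is to verify in sequence that $\hat Z := \oline{G\cdot[v_H]} \subset \mathbb{P}(V)$ is a simple compactification of $Z$ with closed orbit $Y = G\cdot[v]$. Hypothesis (1) gives $\text{Stab}_G([v_H]) = H$, so the orbit map descends to an injective $G$-equivariant embedding $Z = G/H \hookrightarrow \mathbb{P}(V)$ onto the locally closed orbit $G\cdot[v_H]$, which is then open and dense in the compact set $\hat Z$. For the candidate closed orbit, hypothesis (2) gives $\oline Q \subset \text{Stab}_G([v])$; conversely, if $g\cdot v = \lambda v$ with $\lambda<0$ then $g^2\in\oline Q$ and a short computation showing $g\oline Qg^{-1}\subset \oline Q$ combined with $N_G(\oline Q)=\oline Q$ (real parabolics being self-normalizing) forces $g\in\oline Q$, contradicting $\lambda<0$. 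Hence $\text{Stab}_G([v])=\oline Q$ and $G\cdot[v]\cong G/\oline Q$ is a compact real flag variety, so it is closed in $\mathbb{P}(V)$.

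The main step is to place $[v]$ inside $\hat Z$ by an $A$-contraction. Since $\oline U$ is connected unipotent it admits no non-trivial character into $\R_{>0}$, so it fixes $v$, and $v$ lies in an $A$-weight space $V_\chi$ with $\chi$ the lowest weight of $V$ relative to the positive system defined by $\nf$. Pick $H_0\in\af$ with $\alpha(H_0)<0$ for every restricted root $\alpha$ occurring in $\nf$, and set $a_t:=\exp(tH_0)$. Then
\[ e^{-\chi(tH_0)}\, a_t\cdot v' \;\longrightarrow\; \pi_\chi(v') \quad (t\to+\infty), \qquad v'\in V, \]
where $\pi_\chi\colon V\to V_\chi$ is the weight-space projection. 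Irreducibility of $V$ as a $G$-module ensures its $G$-translates span $V$, so $\pi_\chi(g_0 v_H)\neq 0$ for some $g_0\in G$; combining irreducibility with the hypothesis that a whole \emph{ray} $\R^+ v$ (not merely a line) is $\oline Q$-stable should pin down $V_\chi = \R v$, whereupon $[a_t g_0 v_H]\to[v]$ in $\mathbb{P}(V)$ and $G\cdot[v]\subset\hat Z$.

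For uniqueness, let $Y'\subset\hat Z$ be any closed $G$-orbit: as a compact homogeneous projective $G$-variety, the stabilizer of any $[w]\in Y'$ contains a real parabolic subgroup, which after $G$-translation we may take to be $\oline Q$; then $\R w \subset V^{\oline U}\cap V_\chi = \R v$ by the same one-dimensionality, giving $Y'=G\cdot[v]$. The principal obstacle is precisely this one-dimensionality of $V_\chi$, which is needed for both the contraction and the uniqueness argument. In the complex setting it follows immediately from highest-weight theory for irreducible $G_\C$-modules, but in the real case one must carefully handle the scenario where $V_\C$ is not $G_\C$-irreducible, so that $V_\chi$ could a priori be two-dimensional over $\R$; the hypothesis that $\oline Q$ stabilizes an entire ray $\R^+ v$ provides the rigidity that distinguishes $\R v$ as the unique such line and makes the argument go through.
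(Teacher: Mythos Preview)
Your core idea—the $A$-contraction toward the lowest $\af$-weight line—is the paper's as well, but the paper packages it more economically. Rather than first placing $[v]$ inside $\hat Z$ and then separately arguing uniqueness through parabolic stabilizers, the paper shows in one stroke that $G\cdot[v]$ is the \emph{unique} closed $G$-orbit in all of $\mathbb{P}(V)$: for any closed orbit $G\cdot[u]$, irreducibility of $V$ furnishes a translate $gu$ with non-zero component in the lowest weight space, and a sequence $a_n$ going to infinity in the positive Weyl chamber contracts $[a_n g u]\to[v]$, whence $[v]\in G\cdot[u]$ by closedness. Since algebraic orbits are locally closed, $\hat Z$ must contain some closed orbit, and that orbit is then forced to be $Y$. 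This single contraction replaces both your containment step and your uniqueness step, and in particular sidesteps your claim that a point in a compact $G$-orbit has parabolic stabilizer—which, while true, needs its own justification over $\R$ and is unnecessary here.

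Your verification that the stabilizer of the line $\R v$ (not just the ray) equals $\oline Q$, via $g\oline Q g^{-1}\subset\oline Q$ and self-normalization of parabolics, is correct and supplies a detail the paper passes over with ``by assumption''. On the other hand, you rightly flag the one-dimensionality of the lowest weight space $V_\chi$ as the crux of both arguments, yet you do not prove it—you only assert that the ray hypothesis ``provides the rigidity'' without saying how. The paper is equally terse on this point, treating it as standard. Either supply the argument (the ray hypothesis forces the compact group $M$ to fix $v$, and one then rules out a non-trivial $M$-complement inside $V_\chi$ by analyzing $V_\C$), or, following the paper, invoke it as the well-known fact about lowest-weight lines of irreducible real rational representations that it is.
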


\begin{proof} The orbit $Y$ is isomorphic to $G/\oline Q$ by
  assumption. Since $ \oline Q$ is a parabolic subgroup, $Y$ is compact and
  therefore closed in $\mathbb{P}(V)$. The action of $G$ on
  $\mathbb{P}(V)$ is algebraic, hence all orbits are locally
  closed. In particular, $\oline{G\cdot [v_H]}$ must contain a closed
  orbit. Thus it suffices to show that $Y$ is the only closed
  $G$-orbit of $\mathbb{P}(V)$. This is a standard fact whose proof, for the 
convenience to the reader, we briefly recall: Let $G\cdot[u]$ be a closed orbit and  
decompose $u$ in weight spaces for $\af$. Since $V$ is irreducible we may assume
$u$ has a non-trivial component in the lowest weight space $[v]$. A sequence $a_n$ of
elements converging to infinity in the positive Weyl chamber $A^+$ will now exhibit $[v]$ as the 
limit of $a_n\cdot[u]$. 
\end{proof}

\subsection{The structure of $P_\C H_\C$}

Let $P$ be a minimal parabolic subgroup such that $PH$ is open in $G$. Then
$P_\C H_\C$ is open in $G_\C$ and we have:

\begin{lemma} Let $H_{\C,0}$ be the identity component of $H_\C$. Then
 $$P_\C H_\C=P_\C H_{\C,0}.$$
\end{lemma}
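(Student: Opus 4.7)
The plan is to show that every $h\in H_\C$ already lies in $P_\C H_{\C,0}$; the reverse inclusion $P_\C H_{\C,0}\subseteq P_\C H_\C$ is automatic.

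First I would verify that $P_\C H_{\C,0}$ is Zariski open in $G_\C$. The hypothesis that $PH$ is open in $G$ gives $\pf+\hf=\gf$, and complexifying yields $\pf_\C+\hf_\C=\gf_\C$. Consequently the differential of the multiplication map $\mu\colon P_\C\times H_{\C,0}\to G_\C$ at $(1,1)$ is the surjection $(X,Y)\mapsto X+Y$. Translating by left multiplication in $P_\C$ and right multiplication in $H_\C$ (which preserves $\hf_\C$ under the adjoint action, since $H_\C$ normalizes $\hf_\C$) propagates this surjectivity to every point; hence $\mu$ is a smooth morphism of smooth algebraic varieties, and therefore an open map, so $P_\C H_{\C,0}$ is Zariski open in $G_\C$.

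Next, since $H_{\C,0}$ is normal in $H_\C$, for any $h\in H_\C$ we have $hH_{\C,0}=H_{\C,0}h$, and therefore $P_\C hH_{\C,0}=(P_\C H_{\C,0})h$ is the right translate of a Zariski open set, again a nonempty Zariski open subset of $G_\C$.

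Finally I would invoke that $G_\C$, being a connected complex algebraic group, is irreducible as a variety, so any two nonempty Zariski open subsets of $G_\C$ must intersect. Applied to $P_\C H_{\C,0}$ and $P_\C hH_{\C,0}$, this yields $p_1,p_2\in P_\C$ and $h_0,h_1\in H_{\C,0}$ with $p_1 h_0=p_2 h h_1$, so that $h=p_2^{-1}p_1 h_0 h_1^{-1}\in P_\C H_{\C,0}$, as desired. The one point requiring care is the upgrade from analytic to Zariski openness of $P_\C H_{\C,0}$: the submersion property delivers analytic openness immediately, but it is Zariski openness, combined with the Zariski-irreducibility of $G_\C$, that forces the two open subsets to meet; two nonempty analytically open subsets of the merely connected complex Lie group $G_\C$ need not.
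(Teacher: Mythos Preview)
Your argument is correct and follows essentially the same approach as the paper: the paper's proof simply asserts that $P_\C H_{\C,0}$ is Zariski open in $G_\C$ and that $G_\C$ is irreducible, and leaves the remaining deduction to the reader. You have supplied precisely those missing details, including a clean justification of Zariski (not just analytic) openness via smoothness of the multiplication morphism, and the translate-and-intersect step that extracts $h\in P_\C H_{\C,0}$.
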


\begin{proof} This follows from the fact that $P_\C H_{\C,0}$ is Zariski open in $G_\C$ 
and the fact that $G_\C$ is irreducible. 
\end{proof}

We denote by
$\mathcal{P}_+^\C$ the multiplicative monoid of regular functions
on $G_\C$ which have no zero in $P_\C H_\C$. Every $f\in\mathcal{P}_+^\C$ is of the form
\begin{equation}\label{f product}
f(ph)=f(e)\chi(p) \psi(h)\qquad p\in P_\C, h\in H_{\C,0}
\end{equation}
with algebraic  characters
$\chi: P_\C \to \C^*$ and $\psi: H_{\C,0} \to \C^*$. This follows from 
Rosenlicht's theorem, see \cite{KKV} p.~78.  

\begin{lemma}\label{lemma P++} 
The Zariski closed subset $G_\C - P_\C H_\C$ in $G_\C$ is affine and the zero locus 
of a regular function on $G_\C$. 
\end{lemma}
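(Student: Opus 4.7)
The first claim is immediate: $G_\C$ is affine as a complex reductive algebraic group, so any Zariski closed subvariety of it is itself affine. The substance of the lemma is therefore to exhibit a single regular function on $G_\C$ whose zero locus equals $G_\C - P_\C H_\C$. The plan is to decompose the complement into prime divisors, construct a regular function vanishing on each component, and multiply.

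Decompose the closed complement $Y := G_\C - P_\C H_\C$ into irreducible components $D_1,\dots,D_r$. Since $P_\C H_\C = P_\C H_{\C,0}$ and $P_\C \times H_{\C,0}$ is connected, each $D_i$ is $(P_\C \times H_{\C,0})$-stable. The key geometric input is that each $D_i$ is of pure codimension one in $G_\C$, i.e.\ a prime divisor. This rests on the real sphericality of $Z_\C = G_\C/H_\C$: by Brion's theorem, $P_\C \times H_{\C,0}$ has only finitely many orbits on $G_\C$, and the complement of the open orbit $P_\C H_\C$ in the smooth variety $G_\C$ is pure of codimension one. The purity statement is standard in the theory of spherical varieties (the theory of colors), and can also be extracted by a transversal analysis near the open orbit using the local structure theorem (\ref{LSTglob2}).

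Next, since $G_\C$ is a finite central quotient of a product of a simply-connected semisimple group and a torus---both having trivial Picard group---the Picard group $\operatorname{Pic}(G_\C)$ is finite. Choosing a positive integer $n$ with $n \cdot \operatorname{Pic}(G_\C) = 0$, each divisor $n D_i$ is principal, say $n D_i = (\phi_i)$ for some $\phi_i \in \C(G_\C)^\times$. Effectiveness of the divisor $n D_i$ forces $\phi_i$ to be a regular function on $G_\C$ with zero set exactly $D_i$; in particular $\phi_i$ does not vanish on $P_\C H_\C$, so $\phi_i \in \mathcal{P}_+^\C$. The product $f := \phi_1 \cdots \phi_r$ is then a regular function in $\mathcal{P}_+^\C$ whose zero locus equals $D_1 \cup \cdots \cup D_r = G_\C - P_\C H_\C$, proving the lemma. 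The main obstacle I foresee is the purity of codimension in the first step: this does not follow from affineness of $G_\C$ alone, but rather from the spherical structure of $Z_\C$ combined with a careful use of the local structure theorem.
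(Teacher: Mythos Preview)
Your overall architecture---show the complement is pure of codimension one, then kill it with the finiteness of $\operatorname{Pic}(G_\C)$---is exactly the paper's strategy, and your Picard-group step is fine. The gap is in your justification of purity.

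Finiteness of orbits on a smooth variety does \emph{not} force the complement of the open orbit to be pure of codimension one: consider $\SL_2(\C)$ acting on $\mathbb{A}^2$, where there are two orbits and the closed one is the origin, of codimension two. So your appeal to ``Brion's theorem'' in the form ``finitely many orbits $\Rightarrow$ pure codimension one'' is not valid. Moreover, the theory of colors you invoke is a theory about \emph{Borel} orbits on a complex spherical variety; here $P_\C$ is only a minimal $\R$-parabolic, typically strictly larger than a Borel of $G_\C$, and there is no assumption that $Z_\C$ is spherical for $G_\C$. So neither of your two stated reasons actually delivers the codimension-one claim.

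The paper closes this gap by proving that the open set $P_\C H_\C$ (equivalently $P_\C\cdot z_0\subset Z_\C$) is \emph{affine}: the local structure theorem (\ref{LSTglob2}) gives $P_\C\cdot z_0 \simeq U_\C \times S_\C$ with $S_\C = D_\C/C_\C$ a quotient of a reductive group by a reductive subgroup, hence affine by Matsushima's criterion. Then one invokes a general fact (Goodman, Prop.~1): the complement of an affine open subset in any variety is pure of codimension one. Your parenthetical remark that purity ``can also be extracted \dots\ using the local structure theorem'' is pointing at the right tool, but the actual content---affineness of the slice and the Goodman-type codimension argument---needs to be made explicit; that is precisely where the $\SL_2$-on-$\mathbb{A}^2$ obstruction is ruled out.
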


\begin{proof} Recall that a complex homogeneous space $D_\C/C_\C$ of a reductive group $D_\C$ by a reductive 
subgroup $C_\C$ is affine. Hence the local structure theorem (\ref{LSTglob2})
implies that $P_\C\cdot z_0  \subset Z_\C$ is affine.  
It follows that the complement of $P_\C \cdot z_0$ is of pure codimension one in $Z_\C$ and likewise for 
$P_\C H_\C\subset G_\C$ (see \cite{Go}, Prop. 1).      

\par It remains to show that the divisor  ${\mathcal D}:=G_\C - P_\C H_\C$ is the zero locus 
of a regular function. This is a consequence of the fact that the Picard group of $G_\C$ 
is finite (cf. \cite{KKV}, Prop. 4.5). 
\end{proof}

\begin{definition}\label{defi P++}
We denote by $\mathcal{P}_{+}\subseteq\mathcal{P}_+^\C$ the set of regular functions $f$ on $G_\C$ 
for which 
\newcounter{saveenum}
\begin{enumerate}
\item\label{f(G) real} $f(G)\subseteq\R$, 
\item\label{M invariant} $f(mg)=f(g)$ for all $m\in M, g\in G$,
\item\label{P+ cond} $P_\C H_\C\subseteq \{g\in G_\C\mid f(g)\neq 0\}$.
 \setcounter{saveenum}{\value{enumi}}
\end{enumerate}
and by $\mathcal{P}_{++}\subseteq \mathcal{P}_{+}$ the subset with {\rm (\ref{P+ cond})} replaced by
\begin{enumerate}
 \setcounter{enumi}{\value{saveenum}}
\item\label{P++ cond} $P_\C H_\C= \{g\in G_\C\mid f(g)\neq 0\}$.
\end{enumerate}
\end{definition}

\begin{lemma}\label{lemma P++2}
 $\mathcal{P}_{++}$ is not empty.
\end{lemma}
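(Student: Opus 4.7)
The plan is to take the regular function provided by Lemma \ref{lemma P++} whose zero locus is the divisor $\mathcal{D} := G_\C \setminus P_\C H_\C$, and symmetrize it in two steps so that it becomes real-valued on $G$ and left $M$-invariant without altering its zero set.

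First I would pick a regular $f_0$ on $G_\C$ with $\{g \in G_\C \mid f_0(g)=0\} = \mathcal{D}$, which exists by Lemma \ref{lemma P++}. Since $f_0$ has no zero on $P_\C H_\C$, it lies in $\mathcal{P}_+^\C$, and (\ref{f product}) combined with Zariski density of $P_\C H_\C$ in $G_\C$ upgrades to the full semi-invariance
\[
f_0(p g h) \;=\; \chi(p)\, f_0(g)\, \psi(h) \qquad (g \in G_\C,\; p \in P_\C,\; h \in H_{\C,0})
\]
for algebraic characters $\chi : P_\C \to \C^*$ and $\psi : H_{\C,0} \to \C^*$.

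Next I would exploit the real structure. Let $\sigma$ denote the complex conjugation on $G_\C$ with fixed-point set $G$. Since $P_\C$ and $H_\C$ are defined over $\R$, the divisor $\mathcal{D}$ is $\sigma$-stable. Setting $\tilde f_0(g) := \overline{f_0(\sigma(g))}$ produces a regular function on $G_\C$ (obtained by complex-conjugating the coefficients of $f_0$ in any algebraic coordinates adapted to the real form) whose zero locus is $\sigma^{-1}(\mathcal{D}) = \mathcal{D}$. I would then define $f := f_0 \cdot \tilde f_0$. By construction $f$ is regular with zero locus equal to $\mathcal{D}$ as a set, so (\ref{P++ cond}) holds; and for $g \in G$ we have $\sigma(g) = g$, giving $f(g) = |f_0(g)|^2 \in \R_{\geq 0}$, which is (\ref{f(G) real}).

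Finally I would verify left $M$-invariance. From the semi-invariance of $f_0$ and the definition of $\tilde f_0$,
\[
f(pg) \;=\; \chi(p)\,\overline{\chi(\sigma(p))}\; f(g) \qquad (p \in P_\C,\; g \in G_\C).
\]
For $m \in M \subset P \subset P_\C$ we have $\sigma(m) = m$, so $f(mg) = |\chi(m)|^2 f(g)$; since $M$ is compact and $\chi|_M$ is a continuous homomorphism into $\C^*$, its image lies in $S^1$, whence $|\chi(m)| = 1$ and $f(mg) = f(g)$, yielding (\ref{M invariant}). Hence $f \in \mathcal{P}_{++}$. The only point requiring care is the extension of the semi-invariance of $f_0$ from $P_\C H_\C$ to all of $G_\C$ and the verification that the product $f_0 \tilde f_0$ does not enlarge the zero set; both are immediate from Zariski density and the fact that $f_0$ and $\tilde f_0$ share the same zero locus $\mathcal{D}$.
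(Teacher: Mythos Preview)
Your proof is correct and follows essentially the same route as the paper: starting from the regular function furnished by Lemma~\ref{lemma P++}, you form the product $f_0(g)\overline{f_0(\sigma(g))}$ to obtain reality on $G$ and the correct zero locus, and then use compactness of $M$ to get $|\chi(m)|=1$ for left $M$-invariance. Your write-up is somewhat more explicit (e.g.\ extending the semi-invariance by Zariski density and spelling out why $\tilde f_0$ is regular), but the argument is the same.
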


\begin{proof}
We obtain from Lemma \ref{lemma P++}
a
function $f\in\C[G_\C]$ with $f(z)=0$ if and only if $z\notin P_\C H_\C$.
For elements in $P_\C H_\C=P_\C H_{\C,0}$ the identity (\ref{f product}) holds.
Let $g\mapsto \bar g$ denote the complex conjugation of $G_\C$
with respect to $G$. Then the function $F(z)=f(z)\oline{f(\bar z)}$
satisfies (\ref{f(G) real}) and (\ref{P++ cond}). 
As $M$ is compact, we observe that $|\chi(m)|^2 =1$ for all $m \in M$. Hence $F$ 
satisfies (\ref{M invariant}) as well.  
\end{proof}

\subsection{The left and right stabilizers of $P_\C H_\C$}

In this subsection our concern is with the 
left and right stabilizers of the double coset $P_\C H_\C$ in $G_\C$. 
We denote by $L$ and $R$ the left and right regular representations 
of $G_\C$ on $\C[G_\C]$, that is, 
$$(L(g)f)(h)=f(g^{-1}h),\qquad (R(g)f)(h) = f(hg),$$ 
for $g,h\in G_\C$ and $f \in \C[G_\C]$. 

We begin with the discussion of the left stabilizer.

\begin{lemma}\label{rmk Q}
Let $H<G$ be a spherical subgroup 
and let $P$ and $Q$ be as above. Then
\begin{equation}\label{eq QC}
Q_\C=\{ g\in G_\C\mid  gP_\C  H_\C  = P_\C  H_\C \}\,. 
\end{equation}
Furthermore, for $f\in \mathcal{P}_{++}$ one has: 
\begin{equation}\label{eq Q}
Q=\{ g\in G\mid L(g) f \in \R^+ f\}.
\end{equation}
\end{lemma}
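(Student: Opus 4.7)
The plan is to first establish (\ref{eq QC}) and then extract (\ref{eq Q}) by restricting to $G$ and checking positivity of the resulting character. For the forward inclusion in (\ref{eq QC}), I would invoke the complex local structure theorem (\ref{LSTglob2}) to obtain $Q_\C\cdot z_0=P_\C\cdot z_0$, equivalently $Q_\C H_\C=P_\C H_\C$. Then for $q\in Q_\C$ the computation $qP_\C H_\C\subseteq Q_\C\cdot P_\C H_\C=Q_\C H_\C=P_\C H_\C$ (using $P_\C\subseteq Q_\C$), together with the analogous bound for $q^{-1}$, yields equality.

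For the reverse inclusion, I would fix any $f\in\mathcal{P}_{++}$ (available by Lemma \ref{lemma P++2}) and study the left stabilizer $N$ of $P_\C H_\C=\{f\neq 0\}$. As a closed subgroup of $G_\C$ containing $P_\C$, $N$ is parabolic, hence connected. For $g\in N$, since $L(g)f$ and $f$ cut out the same reduced divisor on $G_\C$, we have $L(g)f=u_g\cdot f$ for some unit $u_g\in\C[G_\C]^\times$; via the short exact sequence $1\to\C^\times\to\C[G_\C]^\times\to X^\ast(G_\C)\to 1$, the map $g\mapsto[u_g]$ is a group homomorphism from connected $N$ into the discrete lattice $X^\ast(G_\C)$, hence trivial. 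Thus $L(g)f\in\C^\times f$ for all $g\in N$, identifying $N$ with the $G_\C$-stabilizer of the line $\C f$, which by the construction of $Q$ from a $P$-semi-invariant in the local structure theorem coincides with $Q_\C$. I expect this final identification of the line-stabilizer with $Q_\C$ to be the main technical obstacle, as it requires unpacking the uniqueness of $Q$ from the local structure theorem.

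For (\ref{eq Q}), I would intersect (\ref{eq QC}) with $G$. Since $f|_G$ is real-valued by (\ref{f(G) real}), for $g\in Q_\C\cap G$ the scalar $c(g)$ in $L(g)f=c(g)f$ satisfies $c(g)=f(g^{-1})/f(e)\in\R^\times$, giving $Q=\{g\in G:L(g)f\in\R^\times f\}$ with $c:Q\to\R^\times$ a continuous character. On $P=MAN$ one has $c(p)=\chi(p)^{-1}$; property (\ref{M invariant}) gives $\chi|_M=1$, the unipotence of $N$ gives $\chi|_N=1$, and connectedness of $A$ gives $\chi|_A>0$, so $c|_P>0$. Using the Iwasawa decomposition $L=K_L A N_L$ of the Levi together with $U\subseteq N$, I would rewrite $Q=LU=K_L\cdot P$ and control $c|_{K_L}$: compactness of $K_L$ forces $c(K_L)\subseteq\{\pm 1\}$, connectedness kills $c|_{K_L^0}$, and (\ref{M invariant}) kills $c|_M$; the structural identity $K_L=K_L^0\cdot M$ valid for real reductive Lie groups then forces $c|_{K_L}=1$. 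Combined with $c|_P>0$, this gives $c|_Q>0$ and completes the proof.
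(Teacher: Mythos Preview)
Your forward inclusion for (\ref{eq QC}) and your detour through units in $\C[G_\C]^\times$ to pass from ``preserves the divisor $\{f=0\}$'' to ``preserves the line $\C f$'' are both fine; the latter is a pleasant alternative to applying Rosenlicht's theorem directly to the invertible function $f|_N$ on the connected group $N$. Your positivity argument for (\ref{eq Q}) is also essentially correct, and more explicit than the paper's terse appeal to (\ref{f product}) and Definition~\ref{defi P++}; the structural identity $K_L=K_L^0\cdot M$ you invoke is valid in the present algebraic setting (it is equivalent to the connectedness of the real flag variety $L/(L\cap P)$), though it deserves a word of justification.

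The genuine gap is the step you yourself flag: identifying the line-stabilizer of $\C f$ with $Q_\C$. Appealing to ``uniqueness of $Q$ from the local structure theorem'' is not enough, because uniqueness only says there is exactly one $Z$-adapted parabolic above $P$; it does not tell you that the left stabilizer $N\cap G$ is $Z$-adapted. The paper's proof supplies exactly this missing argument, and it rests on a feature of the \emph{iterative} construction of $Q$ in \cite{KKS}: starting from an arbitrary real parabolic $Q_0\supseteq P$, one produces a strictly decreasing chain $Q_0\supset Q_1\supset\cdots\supset Q_k=Q$, and at each step $(Q_{j+1})_\C H_\C\subsetneq (Q_j)_\C H_\C$. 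Taking $Q_0:=N\cap G$, one already has $Q\subseteq Q_0$ from $QH=PH$; if the inclusion were strict, the strict decrease would force $Q_\C H_\C\subsetneq (Q_0)_\C H_\C=P_\C H_\C$, contradicting $Q_\C H_\C=P_\C H_\C$. Hence $Q_0=Q$, and both (\ref{eq QC}) and the inclusion $\{g\in G:L(g)f\in\R^+f\}\subseteq Q$ follow at once. In short, the core of the lemma is precisely the step you left as an obstacle, and its resolution requires the strict monotonicity of the sets $(Q_j)_\C H_\C$ along the chain, not merely the uniqueness of its endpoint $Q$.
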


\begin{proof} 
In \cite{KKS}, proof of Theorem 2.2, the parabolic subgroup $Q$ is obtained in an iterative
procedure, by which a strictly decreasing sequence of real parabolic subgroups 
$Q_0\supset Q_1\supset \dots \supset Q_k=Q$ is constructed. The parabolic subgroup
$Q$ obtained in the final step is unique, and in particular independent of the initial subgroup 
$Q_0$. Following the proof in \cite{KKS}
it can be seen that in each step of the iteration, the set
$(Q_{j+1})_\C H_\C$ is a proper subset of the previous set $(Q_j)_\C H_\C$. 
In the first step of the iteration, as described in \cite{KKS},
this amounts to the fact that the function
$F$ constructed there must satisfy $F(g)=0$ for some $g\in (G_n)_\C$, since it is a matrix coefficient for a 
non-trivial irreducible representation of this semisimple group.

We apply this procedure with $Q_0:= Q_{0, \C}\cap G$ where
$$Q_{0, \C}=\{ g\in G_\C \mid  gP_\C  H_\C  = P_\C  H_\C \}.$$
Note that $Q_{0, \C}$ 
is a parabolic subgroup of $G_\C$ which is defined over $\R$, and hence
$Q_0$ is a real parabolic subgroup with $(Q_0)_\C=Q_{0,\C}$. 
It follows from $QH=PH$ that $Q\subseteq Q_0$. If $Q$ is strictly smaller than $Q_0$ 
the discussion above implies that $Q_\C H_\C$ is strictly smaller than
$Q_{0,\C} H_\C$, which contradicts the definition of $Q_{0,\C}$. Hence  
$Q=Q_0$ and (\ref{eq QC}) is valid.

Finally let $f\in\P_{++}$ and note that
$$Q\subseteq \{ g\in G\mid L(g) f \in \R^+ f\}\subseteq Q_0$$
by (\ref{f product}) and Definition \ref{defi P++}. 
Hence (\ref{eq Q}) also follows.
\end{proof}

\begin{rmk} The corresponding real version of (\ref{eq QC}),
in which $P_\C H_\C$ is replaced by $PH$, is not true in general.
For instance if $H=K$ is a maximal compact subgroup of $G$, then $PH=G$ and the left stabilizer 
in $G$ of $PH$ is $G$ whereas $P=Q$ in this case. 
\end{rmk}

We move on to the discussion of the right stabilizer of $P_\C H_\C$
which we write as  $J:=J_\C \cap G$ where
\begin{equation}\label{defJ}
J_\C:=\{ g\in G_\C \mid P_\C H_\C g = P_\C H_\C\}.
\end{equation}
Furthermore, for $f\in {\mathcal P}_{++}$ we set 
\begin{equation}\label{Hfx}
\Hfx:=\{ g\in G \mid R(g) f \in \R^\times f\}\, .
\end{equation}

Note that both $J$ and $\Hfx$ are closed algebraic 
subgroups of $G$

\begin{lemma}\label{J lemma} One has  
\begin{equation}\label{J inclusion}
H_{\C,0} \cap G \,\subset \Hfx \subset J
\end{equation}
and 
\begin{equation}\label{normalizer inclusion}
H \subset N_G(H)\subset J\, .
\end{equation}
In particular, $\Hfx$, $N_G(H)$, and $J$ are spherical subgroups.
\end{lemma}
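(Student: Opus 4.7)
The plan is to establish the two inclusion chains separately, then read off sphericity as an immediate consequence. The first chain $H_{\C,0}\cap G \subset \Hfx \subset J$ I would handle algebraically, using the character decomposition attached to a chosen $f\in\mathcal{P}_{++}$; the second chain $H\subset N_G(H)\subset J$ I would handle geometrically, through a $G_\C$-equivariant right-translation automorphism of a suitable finite cover of $Z_\C$.

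For the first chain, fix $f\in\mathcal{P}_{++}$ and use the product formula $f(ph)=f(e)\chi(p)\psi(h)$ from \eqref{f product} on $P_\C H_\C$. For $g\in H_{\C,0}$ this immediately yields $R(g)f=\psi(g)f$ on the Zariski-dense open set $P_\C H_\C$, and hence on all of $G_\C$ by the fact that two regular functions agreeing on a Zariski-dense open set coincide. When additionally $g\in G$, properties \ref{f(G) real} and \ref{P+ cond} of Definition \ref{defi P++} ensure $f(e)\in\R^\times$ and $f(g)\in\R$, so $\psi(g)=f(g)/f(e)\in\R^\times$ and $g\in\Hfx$. Conversely, $g\in\Hfx$ means $R(g)f=\lambda f$ with $\lambda\in\R^\times$, so the zero set of $R(g)f$ equals the zero set of $f$, which by property \ref{P++ cond} is $G_\C\setminus P_\C H_\C$. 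Since this zero set can also be written as $\{x\in G_\C : xg\notin P_\C H_\C\}$, we conclude $P_\C H_\C g=P_\C H_\C$, placing $g$ in $J_\C\cap G=J$.

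For the second chain, $H\subset N_G(H)$ is trivial. Given $g\in N_G(H)$, $\Ad(g)\hf=\hf$ implies $\Ad(g)\hf_\C=\hf_\C$ and hence $g\in N_{G_\C}(H_{\C,0})$, so $\phi_g\colon xH_{\C,0}\mapsto xgH_{\C,0}$ is a well-defined $G_\C$-equivariant automorphism of $G_\C/H_{\C,0}$ and in particular permutes the open $P_\C$-orbits. The preceding lemma $P_\C H_\C=P_\C H_{\C,0}$ shows that the preimage in $G_\C/H_{\C,0}$ of the unique open $P_\C$-orbit in $Z_\C$ is the single $P_\C$-orbit $P_\C H_{\C,0}/H_{\C,0}$, so $G_\C/H_{\C,0}$ itself carries a unique open $P_\C$-orbit. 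Consequently $\phi_g(H_{\C,0})=gH_{\C,0}$ must lie in this orbit, i.e., $g\in P_\C H_{\C,0}$, and combined with $H_{\C,0}g=gH_{\C,0}$ one concludes $P_\C H_\C g=P_\C g H_{\C,0}=P_\C H_{\C,0}=P_\C H_\C$, so $g\in J$.

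Sphericity of all three groups is then immediate: each of their Lie algebras contains $\hf$, so adding $\pf$ recovers $\gf$ by the real sphericity of $H$. The step I expect to be most delicate is the unique-open-orbit assertion for $G_\C/H_{\C,0}$ that enters the $N_G(H)\subset J$ argument: without the identity $P_\C H_\C=P_\C H_{\C,0}$ from the preceding lemma, $gH_{\C,0}$ could a priori lie in an open $P_\C$-orbit different from the one through the base point, and the conclusion $g\in P_\C H_{\C,0}$ would fail.
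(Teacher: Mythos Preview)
Your proof is correct and follows essentially the same approach as the paper: for \eqref{J inclusion} you invoke \eqref{f product} and condition~(\ref{P++ cond}) of Definition~\ref{defi P++} exactly as the paper does, and for \eqref{normalizer inclusion} the paper's one-line argument---that $N_G(H)$ acts on the set of open $H_\C$-orbits in $P_\C\backslash G_\C$, of which there is only one---is the dual formulation of your argument via the unique open $P_\C$-orbit in $G_\C/H_{\C,0}$. Your version, working with $H_{\C,0}$ and explicitly invoking $P_\C H_\C=P_\C H_{\C,0}$, is in fact a bit more careful, since membership of $g\in N_G(H)$ in $N_{G_\C}(H_{\C,0})$ is immediate from the Lie algebra, whereas normalization of the possibly disconnected $H_\C$ would require an extra word.
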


\begin{proof}
The inclusions in (\ref{J inclusion}) follow from
(\ref{f product}) and from Condition (\ref{P++ cond}) of Definition \ref{defi P++}, respectively.

  The normalizer of $H$ acts on the set of open $H_\C$-orbits in
  $P_\C\backslash G_\C$. Since there is only one, (\ref{normalizer inclusion}) follows.
\end{proof}

\begin{ex} It can happen that $J/N_G(H)$ is of positive dimension. 
For that let $G=\SO(1,n)$ with  $n\geq 3$ and let  $P=MAN$ be a minimal parabolic subgroup of $G$.
Then $M= \SO(n-1)$. Let $M' <M$ be any subgroup which is transitive on 
the $n-2$-sphere.  For instance if $n-1=2k$, then $M'= U(k)$ is such a group. 
Further, $H= M' A$ is a self-normalizing spherical subgroup with $J= MA$.
(see \cite{KS1} for all that).
\end{ex}

\par For a topological group $G$ we denote by $G_0$ its identity component.

\begin{lemma}\label{H_f=J} The following assertions hold: 
\begin{enumerate} 
\renewcommand{\labelenumi}{(\alph{enumi})}
\item For $f\in {\mathcal P}_{++}$ one has 
$$(\Hfx)_0 = J_0\, .$$ 
In particular $\hfx=\jf$ is independent of the choice of 
$f\in {\mathcal P}_{++}$. 
\item There exists 
$f \in {\mathcal P}_{++}$ such that 
$$ \Hfx = J\, .$$
\end{enumerate}

\end{lemma}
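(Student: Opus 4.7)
The plan is to show that for $j\in J$ (and, in (b), for a suitable replacement of $f$), the translate $R(j)f$ is a real scalar multiple of $f$, which by definition puts $j$ in $\Hfx$. Two preliminary facts are used throughout. First, the semi-invariance formula $f(ph)=f(e)\chi(p)\psi(h)$ valid on the Zariski-dense open set $P_\C H_\C = P_\C H_{\C,0}$ extends to all of $G_\C$, so $L(p^{-1})f=\chi(p)f$ and $R(h)f=\psi(h)f$ as regular functions on $G_\C$ for $p\in P_\C$ and $h\in H_{\C,0}$. Second, by Rosenlicht's theorem, any invertible regular function on the connected reductive group $G_\C$ is a constant times an algebraic character.

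Let $\mathcal{D}=G_\C\setminus P_\C H_\C$, decompose $\mathcal{D}=\bigcup_i D_i$ into irreducible components, and write $n_i$ for the multiplicity of $f$ along $D_i$. For each $j\in J$ the equality $\mathcal{D}\cdot j^{-1}=\mathcal{D}$ gives a permutation $\pi_j$ of $\{D_i\}$, and a local computation shows that the multiplicity of $R(j)f$ along $D_k$ equals $n_{\pi_j^{-1}(k)}$. Hence whenever $\pi_j$ preserves the multiplicity vector $(n_i)$, the quotient $R(j)f/f$ is a unit in $\C[G_\C]$; by Rosenlicht $R(j)f=c\,\chi'\cdot f$ for some $c\in\C^\times$ and character $\chi'\in X(G_\C)$. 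Comparing left $P_\C$-semi-invariance of both sides (both have character $\chi$) forces $\chi'|_{P_\C}=1$; since $P_\C$ contains a maximal torus of $G_\C$ and characters are determined by their restriction there, $\chi'=1$. Reality of $f$ and $R(j)f$ on $G$ together with $f(e)\neq 0$ then give $c\in\R^\times$, so $j\in\Hfx$.

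For (a), take $j\in J_0$. The connected group $J_0$ acts trivially on the finite set of components, so $\pi_j=\mathrm{id}$, the multiplicity vector is trivially preserved, and the argument above applies, yielding $J_0\subset\Hfx$. Together with the reverse inclusion $\Hfx\subset J$ from Lemma \ref{J lemma}, this gives $(\Hfx)_0=J_0$, which is manifestly independent of $f$, and in particular $\hfx=\jf$.

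For (b), the difficulty is that for $j\in J\setminus J_0$ the permutation $\pi_j$ may fail to fix $(n_i)$, so the argument as stated breaks down. The remedy is an averaging trick: pick coset representatives $j_1,\dots,j_k$ for the finite group $J/J_0$ and set
$$\tilde f:=\prod_{\ell=1}^k R(j_\ell)f.$$
The multiplicity of $\tilde f$ along $D_k$ equals $\sum_\ell n_{\pi_{j_\ell}^{-1}(k)}$, a sum over the image of $J$ in $\mathrm{Sym}\{D_i\}$ (since $J_0$ acts trivially), hence $J$-invariant in $k$; consequently the divisor of $\tilde f$ is $J$-invariant and $R(j)\tilde f/\tilde f$ is a unit for every $j\in J$. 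It is routine to check that $\tilde f\in\mathcal{P}_{++}$: the zero locus is still $\mathcal{D}$; reality on $G$ is inherited factor-by-factor since $j_\ell\in G$; and left $M$-invariance is preserved because $M$ commutes with right translation. Applying the argument of (a) to $\tilde f$ and an arbitrary $j\in J$ then yields $J\subset H_{\tilde f}^\times$, which combined with the opposite inclusion proves (b). The main obstacle, and the only place where something beyond the straightforward Rosenlicht/semi-invariance argument is needed, is the multiplicity-matching issue for components not individually fixed by $J_0$, which the averaging precisely resolves.
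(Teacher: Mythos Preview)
Your proof is correct and follows essentially the same approach as the paper. For (a), the paper argues more tersely by invoking the general fact (cited from \cite{KKV}, Prop.~1.3) that an invertible regular function on a variety is automatically an eigenfunction for any connected group acting on it; you unpack this same principle explicitly via divisor multiplicities and Rosenlicht, but the underlying idea is identical. For (b), your averaging product $\prod_\ell R(j_\ell)f$ is exactly the construction the paper uses.
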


\begin{proof} (a) The inclusion $\subseteq$ is obvious from (\ref{J inclusion}).
The function $f$ is non-zero, hence invertible on $P_\C
H_\C\supset J_\C$. Any invertible function is an eigenfunction for a connected
group (see e.g \cite{KKV} Prop. 1.3). Hence $J_0\subseteq \Hfx$. 
\par As for (b) note that $J_0 =(\Hfx)_0$. Let $g_1,\ldots, g_k\in J$ be elements such that 
$J= \bigcup J_0 g_j$, with a disjoint union. 
Let $ f \in {\mathcal P}_{++}$. Then $F:=\prod_{j=1}^k R(g_j) f $ lies in ${\mathcal P}_{++}$
and has the desired property.  
\end{proof}

\begin{lemma}\label{lem hf} We have $$\hf\subset \jf  \subset \hf +\df.$$
\end{lemma}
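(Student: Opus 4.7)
The first inclusion $\hf \subset \jf$ is immediate from Lemma \ref{J lemma}, which gives $H \subset N_G(H) \subset J$, and hence $\hf \subset \jf$ at the Lie algebra level.

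For $\jf \subset \hf + \df$, I would first note that the direct sum $\gf = \hf \oplus \af_Z \oplus \mf_Z \oplus \uf$ from (\ref{deco2}), together with $\af_Z + \mf_Z \subset \df$ and the fact that $\lf_n \subset \hf$ by the local structure theorem, yields $\hf + \df = \hf + \af_Z + \mf_Z$ and consequently $\gf = (\hf + \df) \oplus \uf$. Thus it suffices to prove that the $\uf$-component of every $X \in \jf$ in this direct sum vanishes.

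My strategy is to use the function $f \in \P_{++}$ with $\Hfx = J$ provided by Lemma \ref{H_f=J}(b); then $\jf = \hfx$ by part (a), so every $X \in \jf$ satisfies $R(X) f = c(X) f$ for some $c(X) \in \R$. By Lemma \ref{rmk Q}, $f$ is additionally left-$Q_\C$-semi-invariant under an algebraic character $\chi$, which must be trivial on the unipotent radical $U_\C$; thus $f|_{U_\C} \equiv 1$. Geometrically, the goal should reduce to showing that $J_\C \cdot z_0 \subset S_\C$ in the LST identification (\ref{LSTglob2}) $P_\C \cdot z_0 \cong U_\C \times S_\C$, equivalently $J_\C \subset L_\C H_\C$; this would give $\jf \cdot z_0 \subset T_{z_0} S_\C \cong (\lf + \hf)/\hf$, whence $\jf \subset \lf + \hf = \df + \hf$.

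The main obstacle is proving $J_\C \subset L_\C H_\C$. The natural $U_\C$-projection $\pi : P_\C H_\C \to U_\C$ coming from the factorization $P_\C H_\C = U_\C \cdot L_\C H_\C$ (well-defined since the $(l,h)$-ambiguity is absorbed by $L_\C \cap H_\C$) should vanish identically on $J_\C$. Infinitesimally, writing any $X \in \jf$ as $X = X_{hd} + X_u$ with $X_{hd} \in \hf + \df$ and $X_u \in \uf$, the identity $R(X) f = c(X) f$ evaluated at points $g = \exp(sY) \in U_\C$ with $Y \in \uf$, combined with $f|_{U_\C} \equiv 1$ and the $P_\C H_\C$-factorization of $g\exp(tX)$, should force $X_u = 0$; the delicate point is the non-uniqueness of the decomposition $\gf_\C = \pf_\C + \hf_\C$, which must be handled with care so as not to introduce spurious relations.
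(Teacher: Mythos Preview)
Your first inclusion and the reduction of the second to $J_0 \cdot z_0 \subset S$ (equivalently $\jf \subset \lf + \hf = \df + \hf$, using $\gf = (\hf+\df)\oplus\uf$) are both correct. The computational route you then propose, however, does not close, and the gap is not merely a matter of care. Evaluating $R(X)f = c(X)f$ at points $g = \exp(sY)\in U_\C$ gives no constraint on $X_u$: writing $g\exp(tX) = u(t)d(t)h(t)$ with $u(t)\in U$, $d(t)\in D$, $h(t)\in H$ (possible for small $t$ by the local structure theorem), one has $f(g\exp(tX)) = \chi(d(t))\psi(h(t))f(e)$ because $\chi|_U = 1$; differentiating at $t=0$ yields $d\chi(\dot d(0)) + d\psi(\dot h(0))$, and since $d\chi = d\psi$ on $\df\cap\hf$ this quantity equals $c(X)$ for every value of $X_u$. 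In other words, right-translation by $X$ followed by evaluation on $U$ sees only the $(\hf+\df)$-component of $X$, which is the opposite of what you need.

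The paper bypasses this computation entirely. It applies the local structure theorem a second time, now to $Z_f := G/\Hfx$, using the \emph{same} function $f$ (which is a right $\Hfx$-eigenfunction by construction). The crucial input is that the $Z_f$-adapted parabolic above $P$ is again $Q$: this follows from the characterization of $Q$ in Lemma~\ref{rmk Q} as the left stabilizer of $P_\C H_\C$, which is visibly unchanged when $H$ is replaced by $\Hfx$. Since both slices $S\subset Z$ and $S_f\subset Z_f$ are then cut out by the same moment-type map (built from $f$), the projection $\phi: Z\to Z_f$ satisfies $S = \phi^{-1}(S_f)$. In particular the fibre $\Hfx\cdot z_0 = \phi^{-1}([z_0])$ lies in $S = D\cdot z_0$, and passing to tangent spaces at $z_0$ gives $\hfx \subset \df + \hf$. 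Put differently: because $\Hfx$ scales $v_H$, the moment map $\mu$ of (\ref{momap}) is constant along $\Hfx\cdot z_0$, which lands this orbit in the slice directly. Either way, the mechanism that works uses \emph{left} derivatives of $f$ in all directions of $\gf$ (encoded in $\mu$), not right evaluation on $U$.
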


\begin{proof} 
By Lemma \ref{H_f=J} it suffices to show for $f\in {\mathcal P}_{++}$ that
\begin{equation}  \hfx  \subset \hf +\df\, .\end{equation}
Let $\phi:Z=G/H\to Z_f:=G/\Hfx$ be the canonical
  map. Let $Q_f\supseteq P$
  the unique $Z_f$-adapted parabolic above $P$ (see Section \ref{lst}).
  It follows from the
  construction of the adapted parabolics (which we just summarized in
  the proof of Lemma \ref{rmk Q}) that $Q=Q_f$. Now we apply the
  local structure theorem to $Z$ and $Z_f$. Let $S\subseteq Z$ and
  $S_f\subseteq Z_f$ be slices. Since they are constructed from the
  same $f$, we have $S=\phi^{-1}(S_f)$. Since $S$ is homogeneous for
  $D$, this implies the assertion $\df+\hf=\df+\hfx$.
\end{proof}

Recall the notion of real rank of $Z$ from \cite{KKS} which is an invariant 
of the real spherical space $Z$ and given by $\rank_\R Z = \dim \af_Z$. 

\par Note that if $Z$ is quasi-affine, then the complement of the
open $P_\C$-orbit in $Z_\C$ is the zero locus of some
$P$-semiinvariant regular function on $Z$. Its pull-back $f$ to $G$ is
therefore in $\mathcal{P}_{++}$ with trivial right character $\psi$.
We denote by ${\mathcal P}_{++,
  \1}$ the subset of ${\mathcal P}_{++}$ which corresponds to
$\psi=\1$ on $H_{\C,0}$, and set for this case
$$I:= \bigcap_{f \in \mathcal P_{++, \1}} H_f , \qquad 
H_f:=\{ g\in G\mid R(g) f = f \}.$$

Then $H_{\C,0}\cap G\subset I$ and 
\begin{equation}\label{HIJ}
I\subset H_f\subset J.
\end{equation} 
We obtain a refinement of Lemma \ref{lem hf} as follows. 

\begin{prop} \label{prop hf} Suppose that $Z$ is quasi-affine. 
Then the following assertions hold: 
\begin{enumerate} 
\item\label{first}  $\rank_\R G/H = \rank_\R G/I$. 
\item\label{second} $\ifr\subseteq \hf +\df_c$. 
\item\label{third}  $\jf\subseteq \ifr  +\af_Z$. 
\end{enumerate}
\end{prop}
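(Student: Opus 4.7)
My strategy is character-theoretic. For each $f \in \mathcal{P}_{++,\1}$, Lemma \ref{H_f=J}(a) gives $(\Hfx)_0 = J_0$, so the rule $R(g)f = \lambda_f(g)f$ defines a real algebraic character $\lambda_f\colon J_0 \to \R^\times$ whose kernel is $H_f \cap J_0$. Setting $\chi_f := d\lambda_f\colon \jf \to \R$, we have $\ifr = \bigcap_{f \in \mathcal{P}_{++,\1}} \ker \chi_f$. Two vanishings of $\chi_f$ will be immediate: (i) because $\psi = \1$, equation (\ref{f product}) forces $\lambda_f$ to be trivial on $(H_{\C,0}\cap G)_0$, so $\chi_f|_\hf = 0$; and (ii) because any real algebraic character has image in $\{\pm1\}$ on a compact subgroup (hence is trivial on its identity component), $\chi_f$ vanishes on $\jf \cap \df_c$, where $\df_c := \zf(\lf)_{cp} + \lf_c$ denotes the compact part of $\df$.

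Using the direct sum (\ref{deco2}) $\gf = \hf \oplus \af_Z \oplus \mf_Z \oplus \uf$, and combining Lemma \ref{lem hf} with $\af_h, \mf_h \subset \hf$, I obtain $\jf \subset \hf \oplus \af_Z \oplus \mf_Z$. Set $V := \jf \cap (\af_Z \oplus \mf_Z)$, so that $\jf = \hf \oplus V$. A key structural step is the splitting $V = (V \cap \af_Z) \oplus (V \cap \mf_Z)$: this holds because the connected subgroup with Lie algebra $V$ is a real algebraic subgroup of $A_Z \cdot D_c$, the product of the real split torus $A_Z$ and the compact group $D_c$, and in such a product every algebraic subgroup is of direct-product form (there are no non-trivial algebraic homomorphisms between the split and compact factors). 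Granted this splitting, (ii) makes every $\chi_f$ vanish on $V \cap \mf_Z$, so $\chi_f|_V$ is determined by a linear functional $\tilde\chi_f\colon V \cap \af_Z \to \R$. A direct computation using that $A_Z \subset P_\C$ (evaluate $R(a)f$ at $e$) shows that for $a \in A_Z \cap J$ one has $\lambda_f(a) = \chi(a)$, the value at $a$ of the left $P$-character of $f$; hence $\tilde\chi_f$ is simply the restriction to $V \cap \af_Z$ of the $P$-weight of $f$.

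Statement (\ref{second}) then reduces to the assertion that the restrictions to $V\cap\af_Z$ of the $P$-weights of $f \in \mathcal{P}_{++,\1}$ span $(V \cap \af_Z)^*$. This is the main input from quasi-affineness: on a quasi-affine spherical homogeneous space, the $P$-weights of all $P$-semi-invariant regular functions span $\af_Z^*$, and the symmetrization trick of Lemma \ref{lemma P++2} realizes enough of them as genuine elements of $\mathcal{P}_{++,\1}$. Having shown the spanning, we conclude $\ifr = \hf \oplus (V \cap \mf_Z) \subset \hf + \df_c$, which is (\ref{second}). For (\ref{third}), any $X = h + v \in \jf$ splits via $v = v_a + v_m$ (using the splitting of $V$) as $X = (h + v_m) + v_a$, with $h + v_m \in \ifr$ and $v_a \in V \cap \af_Z \subset \af_Z$, so $X \in \ifr + \af_Z$. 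Finally, (\ref{first}) is bookkeeping: the adapted parabolic $Q$ for $G/I$ coincides with that for $G/H$ (the iterative construction of \cite{KKS} applies verbatim to $I$), and (\ref{second}) together with $\df_c \cap \zf(\lf)_{np} = 0$ forces the projection of $\ifr \cap \lf$ onto $\zf(\lf)_{np}$ to equal $\af_h$; hence $\af_{G/I} = \af_Z$ and the ranks agree.

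The main obstacles are twofold. First, the algebraic-subgroup splitting of $V$ inside $A_Z \cdot D_c$: though morally standard, it needs a careful appeal to the structure of real algebraic subgroups of a split-times-compact product. Second, and more essentially, the quasi-affine spanning statement for $P$-weights, which is the only place where the quasi-affineness hypothesis is used and without which the whole reduction collapses. Once these two ingredients are in place, (\ref{second}), (\ref{third}), and (\ref{first}) follow in quick succession as described.
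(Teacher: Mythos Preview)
Your overall strategy is sound and, at its core, unpacks the same mechanism the paper uses: the lattice of $P$-weights of $\mathcal{P}_{++,\1}$ has full rank $\dim\af_Z$ in the quasi-affine case, and this controls both $\ifr$ and the rank of $G/I$. The paper argues in the order $(\ref{first})\Rightarrow(\ref{second})\Rightarrow(\ref{third})$, using the local structure theorem directly (equality of ranks forces compact fibers between the slices); you reverse the order, computing $\ifr=\bigcap\ker\chi_f$ explicitly and reading off $(\ref{second})$ and $(\ref{third})$ before deducing $(\ref{first})$. Both routes rely on the same quasi-affine spanning fact, so neither is essentially more elementary.

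There is, however, a genuine gap in your splitting step. You assert that $V=\jf\cap(\af_Z\oplus\mf_Z)$ is the Lie algebra of a real algebraic subgroup of $A_Z\cdot D_c$, and deduce $V=(V\cap\af_Z)\oplus(V\cap\mf_Z)$ from the split-times-compact structure. But $\af_Z\oplus\mf_Z$ is in general \emph{not} a Lie subalgebra of $\df$ (since $[\mf_Z,\mf_Z]$ need not lie in $\mf_Z$), so $V$ need not be a Lie subalgebra at all, and there is no ``connected subgroup with Lie algebra $V$'' to which the algebraic argument applies. The conclusion you want is nevertheless true, but the algebraic-subgroup argument must be applied to $\jf\cap\df$ (which \emph{is} the Lie algebra of the algebraic subgroup $J_0\cap D$): exactly as in Section~\ref{lst} one gets $\jf\cap\df=(\jf\cap\zf(\lf)_{np})\oplus(\jf\cap\df_c)$. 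Writing $a\in\jf\cap\zf(\lf)_{np}$ as $a_h+a_Z$ and $c\in\jf\cap\df_c$ as $c_h+c_Z$ (with $a_h\in\af_h$, $c_h\in\mf_h$ both in $\hf\subset\jf$), one finds $a_Z\in\jf\cap\af_Z$ and $c_Z\in\jf\cap\mf_Z$, which gives the desired splitting of $V$. With this correction in place, your computation $\chi_f|_{V\cap\af_Z}=$ restriction of the $P$-weight, together with the spanning of $\af_Z^*$ by those weights (this is where quasi-affineness enters, and is exactly what the paper invokes in its first sentence), yields $(\ref{second})$ and $(\ref{third})$ as you describe.

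A minor further point: your justification that $Q$ is also $Z_I$-adapted (``the iterative construction applies verbatim'') is thin. A cleaner argument is that $H\subset I\subset J$ forces $P_\C I_\C=P_\C H_\C$ (since $J_\C$ is the right stabilizer of $P_\C H_\C$), whence Lemma~\ref{rmk Q} gives the same $Q$ for $G/I$.
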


\begin{proof} By comparing $P$-semiinvariants of $G/H$ on the slice,
  the local structure theorem implies that $\rank_\R G/H$ is also the
  rank of the lattice spanned by $P_{++, \1}$. Since by definition
  these are the semiinvariants of $G/I$, this shows the first
  assertion.  Moreover, the equality of ranks shows that the map
  between the slices of $G/H_0$ and $G/I_0$ has compact fibers. This implies 
 the second assertion. The last assertion follows from the
  fact that $\hfx \subseteq \hf_f +\af_Z$ for all $f\in \P_{++,\1}$.
\end{proof}

If $G$ is a real algebraic group, then we denote by $G_{\rm n}$ the normal 
subgroup generated by all unipotent elements. 
Note that $G_{\rm n}$ is connected, and that if $G=L\ltimes R_u$ is an arbitrary Levi 
decomposition, then $G_n=L_n\ltimes R_u$, where $L_n$ is as defined in Section \ref{lst}.
In particular, $G/G_{\rm n}$ has compact Lie algebra. 

In the sequel we view the elements of $\C[Z_\C]$ as right $H_\C$-invariant regular functions 
on $G_\C$. 

\begin{lemma} \label{J-inv}The space $\C[Z_\C]=\C[G_\C]^{H_{\C}} $ of right $H_\C$-invariant regular 
functions on $G_\C$ is right $J_{{\rm n}, \C}$-invariant.  
\end{lemma}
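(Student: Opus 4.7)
Plan: The natural strategy is to prove the stronger statement that $J_{n,\C}$ is contained in $H_{\C,0}$, from which right $J_{n,\C}$-invariance of $\C[Z_\C]$ follows trivially because every right $H_\C$-invariant function is then pointwise fixed by $J_{n,\C}$. Since $J_{n,\C}$ is connected, it suffices to show the infinitesimal version $\jf_{n,\C}\subseteq \hf_\C$. By Lemma \ref{lem hf} we have $\jf_\C\subseteq\hf_\C+\df_\C$, which induces a linear embedding
\[
\jf_\C/\hf_\C\hookrightarrow \df_\C/(\df_\C\cap\hf_\C)=\af_{Z,\C}\oplus\mf_{Z,\C},
\]
so the task is to prove that the image of $\jf_{n,\C}$ under this map vanishes, component by component.

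For the $\af_{Z,\C}$-component, I would use Lemma \ref{H_f=J}(b) to choose $f\in\mathcal{P}_{++}$ with $\Hfx=J$; then $R(g)f=\chi(g)f$ for an algebraic character $\chi$ of $J$. Every such character factors through the reductive quotient $J/J_n$ and is therefore trivial on $J_{n,\C}$, so $R(j)f=f$ for all $j\in J_{n,\C}$. Varying $f$ over a rank-full subsemigroup of $\mathcal{P}_{++,\mathbf{1}}$ (as provided by Proposition \ref{prop hf}) captures a full set of generators of the character lattice, giving vanishing of the $\af_{Z,\C}$-projection of $\jf_{n,\C}$.

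For the $\mf_{Z,\C}$-component, I would argue geometrically using the LST. Because $\jf_\C/\hf_\C$ injects into $T_{z_0}S_\C$, the morphism $J_{\C,0}\to U_\C$ defined by $j\mapsto\pi_{U_\C}(jz_0)$ has vanishing differential at every point, hence is constant with value $e$; consequently $J_{\C,0}\cdot z_0\subseteq S_\C$, so combined with Step 1 the orbit $J_{n,\C}\cdot z_0$ lies in $M_{Z,\C}=D_{c,\C}/M_{h,\C}$. Since $D_c$ is compact, any algebraic homomorphism to $D_c$ from a real algebraic group generated by unipotent and by non-compact semisimple elements is trivial; translating this through the complexification shows $J_{n,\C}$ fixes $z_0$, i.e.\ $J_{n,\C}\subseteq H_\C$, from which the lemma is immediate.

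The main obstacle is the $\mf_{Z,\C}$-step: one must verify rigorously that the action of $J_{n,\C}$ on the orbit $J_{n,\C}\cdot z_0\subseteq S_\C$ factors (up to $H_\C$) through $D_{c,\C}$, since $\jf_\C\subseteq\hf_\C+\df_\C$ does not a priori give a Lie algebra homomorphism to $\df_\C$. I expect this to be handled by combining the inclusion $J_{\C,0}\subseteq D_\C H_\C$ obtained above with the decomposition $D_\C=Z(L_\C)\cdot D_{c,\C}$ and the triviality of algebraic characters of $J_{n,\C}$ valued in the torus $Z(L_\C)$.
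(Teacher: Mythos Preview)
Your strategy is to bypass the lemma entirely by proving the stronger statement $J_{n,\C}\subseteq H_\C$ directly; this is essentially Corollary~\ref{J-cor}, which in the paper is \emph{deduced from} Lemma~\ref{J-inv} rather than the other way around. Your Step~1 is fine: algebraic characters of $J_\C$ vanish on $J_{n,\C}$, so $J_{n,\C}\subseteq I_\C$ and hence $\jf_{n,\C}\subseteq\ifr_\C\subseteq\hf_\C+\df_{c,\C}$ by Proposition~\ref{prop hf} after the quasi-affine reduction.

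The genuine gap is in Step~2. You assert that $j\mapsto\pi_{U_\C}(j\cdot z_0)$ has vanishing differential \emph{at every point} of $J_{\C,0}$, but you only verify this at $j=e$; at a general $j$ the tangent space to the orbit is $dL_j(\jf_\C\cdot z_0)$, and there is no reason for this to lie in $T_{j\cdot z_0}S_\C$ unless one already knows $j\cdot z_0\in S_\C$. The underlying obstruction is that $\hf_\C+\df_\C$ is \emph{not} a Lie subalgebra in general: for $D\in\df$ and $Y+T(Y)\in\mathcal{G}(T)\subseteq\hf$ the bracket $[D,Y+T(Y)]$ has $\oline\uf$-component $[D,Y]$, and one would need $T([D,Y])=[D,T(Y)]$, i.e.\ $\df$-equivariance of $T$, which is not given. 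Hence the infinitesimal inclusion does not integrate to $J_{\C,0}\subseteq D_\C H_\C$, and your final paragraph invokes precisely this unproven inclusion, making the fix circular. Even if one grants $J_{n,\C}\cdot z_0\subseteq S_\C$, the subsequent appeal to ``triviality of algebraic homomorphisms into compact groups'' is not applicable, since the orbit map $J_{n,\C}\to D_\C/C_\C$ is not a group homomorphism ($D_\C H_\C$ is not a subgroup). The paper circumvents all of this by working purely with functions: it shows $J_{n,\C}$ fixes every left $B_\C$-eigenvector in $\C[Z_\C]$, first for $P_\C$-eigenvectors via Rosenlicht, then for general $B_\C$-eigenvectors via an $M$-average and a degree argument in a unipotent one-parameter subgroup --- never needing to lift $\jf\subseteq\hf+\df$ to the group level.
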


\begin{proof} Let $B_\C \subset P_\C$ be 
a Borel subgroup. To prove the lemma it is sufficient to show that every left 
$B_\C$-eigenspace in $\C[Z_\C]$ 
is fixed by $J_{{\rm n}, \C}$ under the right regular action.  Let $f$ be such an eigenfunction. 
Suppose first that $f$ is in fact 
a $P_\C$-eigenfunction, i.e.,~$f\in \mathcal{P}_+^\C$ is attached to a pair of characters $(\chi, \psi)$
on $P_\C H_\C=P_\C H_{\C,0}$. 
As in the proof of Lemma \ref{H_f=J}(a)
we obtain that $R(n)f$ is a multiple of $f$ for all $n \in (J_\C)_0$.

\par In general a $B_\C$-eigenfunction does not need to be a $P_\C$-eigenfunction.  To overcome this difficulty we 
use the method of $M$-averages of \cite{KKS}: 

Let $\C[G]$ denote the ring of regular functions on $G$, i.e.,~the restrictions
to $G$ of functions in $\C[G_\C]$.
Using the $M$-average
$$F\mapsto F^M(g) = \int_M F(mg) \ dm, \quad (g \in G)$$
(with normalized Haar measure $dm$ on the compact group $M$),
we obtain a quadratic map 
$$ \C[G_\C] \to \C[G]^M, \ \ \phi\mapsto \tilde \phi; \quad \tilde \phi := (\big|\phi|_G \big|^2)^M\, .$$

If $f$ is a left $B_\C$-eigenfunction, then $\tilde f$ is a left $P_\C$-eigenfunction 
and by what we just established we conclude that 
$$ R(n)\tilde f = \tilde f \qquad (n \in J_{{\rm n},\C})\, .$$
Further observe that by right equivariance of $\phi\mapsto\tilde\phi$ 
\begin{equation} \label{id} [R(n) f]^\sim = R(n) \tilde f = \tilde f\qquad (n\in J_{{\rm n}, \C}) \, .\end{equation} 
 
\par Let $u(t), t\in \R$, be 
a unipotent one parameter subgroup of $J_{\rm n}$.  Note that $V:=\Span\{R(u(t)) f\mid t\in \R\}$ is a finite 
dimensional subspace. We find elements $F_0, \ldots, F_N$ of $V$ with $F_0=f$ and $F_N\neq 0$ such that 
$$ R(u(t))f= F_0 + t F_1+ \ldots + t^N F_N,\quad (t\in\R).$$ 
When applying the quadratic map from above we obtain
$$[R(u(t))f]^\sim = t^{2N}\widetilde{F_{N}} + \hbox{lower order terms in $t$}\, .$$
From $F_N\neq 0$ it follows that $\widetilde{F_N}\neq 0$, and from
(\ref{id}) we then deduce $N=0$.
The lemma is proved.
\end{proof}

\begin{cor}\label{J-cor}  $H_{\rm n}=J_{\rm n}$. 
\end{cor}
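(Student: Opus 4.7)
The plan is to prove both inclusions separately. The inclusion $H_{\rm n} \subseteq J_{\rm n}$ is immediate from $H \subseteq J$ (Lemma \ref{J lemma}): every unipotent element of $H$ is unipotent in $J$, and $H_{\rm n}$ is the connected normal subgroup of $H$ generated by such elements, so it is contained in its counterpart $J_{\rm n}$.

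For the reverse inclusion $J_{\rm n} \subseteq H_{\rm n}$, I would extract from the proof of Lemma \ref{J-inv} the sharper statement that for every left $B_\C$-eigenfunction $f \in \C[Z_\C]$ and every unipotent one-parameter subgroup $u(t) \subseteq J_{\rm n}$ one actually has $R(u(t))f = f$ pointwise — not merely preservation of the eigenspace. This is precisely the conclusion of the $N=0$ step via the quadratic $M$-averaging map in that proof. Since the right and left regular actions commute, $R(u(t))$ then fixes every left $G_\C$-translate of $f$ as well. Decomposing $\C[Z_\C] = \bigoplus_\pi V_\pi^* \otimes V_\pi^{H_\C}$ as a left $G_\C$-module, the left $B_\C$-eigenfunctions form $(V_\pi^*)^{B_\C,\rm eig} \otimes V_\pi^{H_\C}$, a one-dimensional highest weight line in each $V_\pi^*$ which generates the irreducible $V_\pi^*$ under $G_\C$. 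Thus the left $G_\C$-translates of the left $B_\C$-eigenfunctions span all of $\C[Z_\C]$, forcing $R(u(t))$ to act as the identity on $\C[Z_\C]$.

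Evaluating $R(u(t))\phi = \phi$ at the identity yields $\phi(u(t)) = \phi(e)$ for every $\phi \in \C[Z_\C]$. To pass from this to $u(t) \in H_\C$, I would reduce to the quasi-affine situation via the trick of Section \ref{Embeddings} (replacing $Z$ by $Z_{1,\chi}$, where the auxiliary central $\R^\times$ factor contributes nothing to the ${\rm n}$-parts, so the equality transfers back); there $\C[Z_\C]$ separates points of $Z_\C$, so we conclude $u(t) \in H_\C$ and hence $u(t) \in H_\C \cap G = H$. Because $J_{\rm n}$ is connected and generated by its unipotent one-parameter subgroups together with the noncompact simple factors of its Levi part — themselves generated by unipotent one-parameter subgroups — this forces $J_{\rm n} \subseteq H$. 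The unipotent generators of $J_{\rm n}$, now lying in $H$, are unipotent elements of $H$ and so generate a subgroup of $H_{\rm n}$, yielding $J_{\rm n} \subseteq H_{\rm n}$.

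The main obstacle will be the separation-of-points step: regular functions on $Z_\C$ do not separate points in general, and the quasi-affine reduction via Section \ref{Embeddings} must be executed carefully, verifying that the identification of ${\rm n}$-parts in the enlarged space $Z_{1,\chi}$ translates back to the original $Z$.
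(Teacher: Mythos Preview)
Your argument is correct and takes a genuinely different route from the paper's. Both proofs begin by reducing to the quasi-affine case (the paper's single sentence ``We may assume that $Z$ is quasiaffine'' hides exactly the verification you outline), and both need the easy inclusion $H_{\rm n}\subseteq J_{\rm n}$. For the converse, however, the paper uses Lemma~\ref{J-inv} only as stated---that $\C[Z_\C]$ is \emph{preserved} by the right $J_{{\rm n},\C}$-action---and then observes that this action is by $G_\C$-equivariant algebra automorphisms, hence lands in $N_{G_\C}(H_\C)/H_\C \simeq \Aut_{G_\C}(Z_\C)$; the external input \cite[Cor.~4.3]{KKS} that $N_{G_\C}(H_\C)/H_\C$ carries no real unipotent elements then forces $J_{{\rm n}}\subseteq H$. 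You instead reopen the proof of Lemma~\ref{J-inv} and extract the sharper fact that unipotent one-parameter subgroups of $J_{\rm n}$ \emph{fix} each $B_\C$-eigenfunction (this is precisely the $N=0$ conclusion there), whence they fix all of $\C[Z_\C]$, and separation of points in the quasi-affine case gives $u(t)\in H$ directly. Your route is more self-contained, bypassing \cite[Cor.~4.3]{KKS} entirely; the paper's route treats Lemma~\ref{J-inv} as a black box and is more conceptual in that it identifies the right $J_{{\rm n},\C}$-action with equivariant automorphisms of $Z_\C$. Your closing remark that $J_{\rm n}$ is genuinely generated (not merely normally generated) by unipotent one-parameter subgroups is justified by the structure $J_{\rm n}=L_{J,{\rm n}}\ltimes R_u(J)$ recalled just before Lemma~\ref{J-inv}.
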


\begin{proof} We may assume that $Z$ is quasiaffine. 
First it is clear that $H_{\rm n}\subseteq J_{\rm n}$. To obtain the converse, 
let us denote by $\Aut (Z_\C)$ the group of birational automorphisms of $Z_\C$ and by 
$\Aut_{G_\C}(Z_\C)$ the subgroup of $G_\C$-equivariant ones therein.  Note that 
$N_{G_\C}(H_\C)$ naturally identifies with $\Aut_{G_\C}(Z_\C)$. 
The group $\Aut(Z_\C)$ is isomorphic to the automorphism group of the 
function field $\C(Z_\C)$. The fact that $Z$ and hence $Z_\C$ are quasiaffine implies that 
$\C(Z_\C)$ is the quotient field of $\C[Z_\C]$. 
According to Lemma \ref{J-inv} the group $J_{{\rm n}, \C}$ preserves $\C[Z_\C]$ and hence 
$J_{{\rm n}, \C}\subseteq N_{G_\C}(H_\C)$. As $N_{G_\C}(H_\C)/ H_\C$ has no real unipotent elements (\cite{KKS} Cor. 4.3),
the corollary follows. 
\end{proof}

\par Recall the finite set $F$ from (\ref{finite set}).  
The following result, which is analogous to
\cite{BP}, Prop. 5.1, implies the first assertion of Theorem \ref{mainthm2}.

\begin{theorem} \label{J-cpct} Assume $N_G(H)=H$. Then there exists a compact 
subgroup $M_J<J$ with Lie algebra ${\mathfrak m}_J \subseteq \df_c$ and 
\begin{equation}\label{J/H}
J= M_J H. \end{equation}
Furthermore one has 
$$ J \subseteq  (M\cap J) F H \, .$$ 

\end{theorem}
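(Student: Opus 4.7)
The plan is to establish $J=M_JH$ with $\mf_J\subseteq\df_c$ first, and then derive the decomposition $J\subseteq (M\cap J)FH$ from it. The main technical step is the Lie-algebra inclusion $\jf\subseteq \hf+\df_c$.

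From Lemma \ref{lem hf}, $\jf\subseteq\hf+\df$, and since $\hf\subseteq\jf$ we have $\jf=\hf+(\jf\cap\df)$. By the decomposition of algebraic subalgebras of $\df$ recalled before (\ref{deco}), $\jf\cap\df=(\jf\cap\zf(\lf)_{np})\oplus(\jf\cap\df_c)$, so it suffices to prove $\jf\cap\zf(\lf)_{np}\subseteq\hf$. For this I pass to $\bar\jf:=\jf/\jf_n$, which has compact Lie algebra by the remark preceding Lemma \ref{J-inv}. Corollary \ref{J-cor} gives $\jf_n=\hf_n$, and the hypothesis $N_G(H)=H$ translates into $\nf_\gf(\hf)=\hf$. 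Thus $\bar\hf:=\hf/\hf_n$ is self-normalizing in $\bar\jf$: any $\bar X$ with $[\bar X,\bar\hf]\subseteq\bar\hf$ lifts to $X\in\jf$ with $[X,\hf]\subseteq\hf+\jf_n=\hf$, forcing $X\in\hf$. A self-normalizing subalgebra of a compact Lie algebra contains the center of the ambient algebra, so $\zf(\bar\jf)\subseteq\bar\hf$. Now for $X\in\jf\cap\zf(\lf)_{np}$: $X$ centralizes $\df$, while for $Y\in\hf$ the bracket $[X,Y]$ has its $\lf$-component killed and so lies in $\uf+\bar\uf$; this is a nilpotent element of $\jf$, hence lies in $\jf_n$. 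Therefore the image of $X$ is central in $\bar\jf$, hence in $\bar\hf$, giving $X\in\hf+\jf_n=\hf$.

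Setting $\mf_J:=\jf\cap\df_c$ yields a compact subalgebra of $\df_c$ with $\jf=\hf+\mf_J$. Since $\lf_c$ and $\zf(\lf)_{cp}$ both centralize $\af$ and are compact, $\df_c\subseteq\mf$; in particular $M_J^0:=\exp\mf_J\subseteq M\cap J$. I take $M_J$ to be a maximal compact subgroup of $J$ containing $M_J^0$: from $\jf=\hf+\mf_J$ and $\jf_n=\hf_n\subseteq\hf$, all non-compact Iwasawa directions of $J$ lie in $H$, so $J=M_JH$. For the final inclusion, let $g\in J$. Then $g\cdot z_0\in P_\C\cdot z_0\cap Z=\bigcup_j Pe_j\cdot z_0$ by Lemma \ref{open orbits lemma} and (\ref{allorbits}), so $g=pe_jh$ with $p\in P$, $h\in H$, and those $e_j$ that occur automatically lie in $J$ (since $g,h\in J$ force $pe_j\in J$). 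Writing $p=man$, we have $\uf\cap\jf\subseteq\hf\cap\uf=0$ (using that nilpotent elements of $\jf$ lie in $\jf_n=\hf_n\subseteq\hf$ together with (\ref{deco2})), so $n=1$, and $\af\cap\jf\subseteq\af_h+(\lf_n\cap\af)\subseteq\hf$ (using $\jf\cap\af_Z=0$), so $a\in H$ can be absorbed into $h$. This leaves $p\in M\cap J$ and $g\in (M\cap J)FH$.

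The main obstacle is the central step of proving $\jf\cap\zf(\lf)_{np}\subseteq\hf$. Although the self-normalizer argument in $\bar\jf$ is clean, the delicate point is that an element $X\in\jf\cap\zf(\lf)_{np}$ need not commute with $\hf$ on the nose but only modulo $\jf_n$. Showing that this suffices for centrality in $\bar\jf$ relies on the ideal property of $\jf_n$ together with the automatic containment of nilpotent elements of $\jf$ in $\jf_n$. The refinement for the second assertion is essentially formal once the Lie-algebra description of $\jf$ is in hand.
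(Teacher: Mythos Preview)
Your overall strategy parallels the paper's --- both exploit $J_n=H_n$ (Corollary~\ref{J-cor}) and the fact that $\jf/\jf_n$ is compact --- but there are two genuine gaps.

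\textbf{First gap (centrality of $\bar X$).} Your key step is the assertion that for $X\in\jf\cap\zf(\lf)_{np}$ and $Y\in\hf$, the bracket $[X,Y]$, which you correctly place in $\uf+\bar\uf$, is ``a nilpotent element of $\jf$, hence lies in $\jf_n$''. But elements of $\uf+\bar\uf$ are not nilpotent in general: already in $\sl_2(\R)$ the element $E+F$ is semisimple. Nothing in your argument forces $[X,Y]$ to be nilpotent. The conclusion \emph{is} true, but for a different reason: since $X\in\af$, the operator $\ad X$ is diagonalizable on $\gf$ with real eigenvalues, hence so is the induced operator on $\bar\jf=\jf/\jf_n$; but $\bar\jf$ is compact, so $\ad\bar X$ has purely imaginary spectrum, forcing $\ad\bar X=0$. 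The paper sidesteps this entirely by arguing at the group level: if $gJ_n$ is central in $J/J_n$ then $gHg^{-1}\subseteq HJ_n=H$, so $g\in N_G(H)=H$. This simultaneously handles the component group, which your Lie-algebra argument does not; your passage from $\jf=\hf+\mf_J$ to $J=M_JH$ via ``non-compact Iwasawa directions'' is not a proof.

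\textbf{Second gap (the inclusion $J\subseteq(M\cap J)FH$).} From $g=pe_jh$ you correctly deduce $pe_j\in J$, but you then write $p=man$ and argue as if $m,a,n$ individually lie in $J$, concluding $n=1$ from $\uf\cap\jf=0$ and $a\in H$ from $\af\cap\jf\subseteq\hf$. Neither inference is valid: knowing $pe_j\in J$ gives no control over the separate Iwasawa factors of $p$, and your parenthetical ``$e_j$ that occur automatically lie in $J$'' is not supported by what you wrote. The paper avoids this by passing to the complex picture: from $J_\C\subset Q_\C H_\C$ one gets $J_\C\cdot z_0=(L_\C\cap J_\C)\cdot z_0=(M_\C A_\C\cap J_\C)\cdot z_0$, using $L_{n,\C}\subset H_\C$; taking real points then lands in $\bigcup_j(MA)e_j\cdot z_0$ with the $MA$-part already in $J$, which is what is needed.
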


\begin{proof} 
The group $J/J_n$ is compact modulo its center, and thus 
$J=M_J Z_1$ where $M_J$ is compact and where
$Z_1=\{g\in J\mid gJ_n\in Z(G/J_n)\}$.
Further, Proposition \ref{prop hf}(\ref{second})-(\ref{third}) 
implies that ${\mathfrak m}_J \subseteq \df_c$. 

Note that if $gJ_n\in Z(J/J_n)$ then $g\in N_G(H)$
since  $J_n\subset H\subset J$ by Corollary 
\ref{J-cor}. Since $H=N_G(H)$ we conclude that 
(\ref{J/H}) is valid.

\par 
We have $J_\C\subset Q_\C H_\C $ and hence 
$$J_\C\cdot z_0  = (Q_\C \cap J_\C )\cdot z_0=   (L_\C  \cap J_\C )\cdot z_0.$$
As $L_{n, \C} \subset H_\C$ and $M_\C A_\C \to L_\C/ L_{n, \C}$ is onto we conclude that 
$$J_\C \cdot z_0=(M_\C A_\C\cap J_\C)\cdot z_0.$$
The last assertion of the theorem now follows by taking real points. 
\end{proof}

\subsection{The existence of simple compactifications} \label{S:exist simple c}
We are now ready to prove the second assertion of Theorem \ref{mainthm2}. By Lemma
\ref{H_f=J} there exists 
$f \in {\mathcal P}_{++}$ such that 
$ \Hfx = J.$

\begin{theorem}\label{exists simple compactification} 
Let $Z=G/H$ be a real spherical space. Let $f\in \mathcal{P}_{++}$ such that $\Hfx=J$ and let
$V_f:=\Span_\R\{ R(g)f\mid g\in G\}$ be the real representation of 
$G$ generated by $f$. Then 
\begin{equation}\label{Vf embedding}
 Z_J:=G/J \to \mathbb{P}(V_f), \ \ gH\mapsto R(g)f
\end{equation}
is a $G$-equivariant embedding. 
The closure $\hat Z_f$
of the image provides a simple compactification of $Z_J$.  
\end{theorem}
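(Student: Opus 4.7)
The map $\iota:gJ\mapsto [R(g)f]$ is well-defined, injective, and $G$-equivariant, all immediate from $\Hfx=J$: for $j\in J$ one has $R(j)f\in\R^\times f$, so $[R(gj)f]=[R(g)f]$; conversely $[R(g_1)f]=[R(g_2)f]$ forces $R(g_1^{-1}g_2)f\in\R^\times f$, hence $g_1^{-1}g_2\in\Hfx=J$. Being a $G$-equivariant injective algebraic map onto the locally closed orbit $G\cdot[f]\subset\mathbb{P}(V_f)$, $\iota$ is a homeomorphism of $Z_J$ onto its image, so the closure $\hat Z_f$ is automatically a $G$-compactification of $Z_J$.

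To upgrade this to a \emph{simple} compactification, I must identify a unique closed $G$-orbit in $\hat Z_f$ and check it is isomorphic to $G/\oline Q$. The guiding observation is that the left and right regular representations commute, so the left $Q$-semi-invariance $L(q)f=\chi(q)f$ (which holds since $f\in\mathcal P_{++}$ by Lemma \ref{rmk Q}) propagates to every element of $V_f$: each $F\in V_f$ satisfies $L(q)F=\chi(q)F$. Consequently $V_f^\C := V_f\otimes_\R\C$ is a finite-dimensional $G_\C$-representation consisting entirely of $(Q_\C,\chi)$-left-semi-invariants, cyclically generated by $f$ under the right $G_\C$-action.

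Standard highest-weight theory for complex reductive groups shows that in any irreducible finite-dimensional $G_\C$-module the space of $(Q_\C,\chi)$-left-semi-invariants is at most a line, and is nonzero only for the unique irreducible with the appropriate highest weight. Combined with cyclic generation by $f$, this forces $V_f^\C$ to be irreducible. Lemma \ref{c1-lem} now applies: condition (1) holds with $v_H=f$, whose line-stabilizer is $\Hfx=J$; condition (2) is met by a non-zero lowest $\af$-weight vector $v\in V_f$ for the right action, whose ray is stabilized by $\oline Q$ by the standard weight-space argument together with the $M$-invariance (\ref{M invariant}) of $f$ and the fact that $f$ is real-valued. The lemma then yields the simple compactification with closed orbit $G\cdot[v]\simeq G/\oline Q$.

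The principal obstacle is the irreducibility of $V_f^\C$: one must argue that no two non-isomorphic irreducible constituents of $V_f^\C$ can both carry a $(Q_\C,\chi)$-left-semi-invariant, and that cyclic generation by $f$ rules out isotypic multiplicity within a single irreducible type. A secondary check is that the stabilizer of the lowest weight ray is exactly $\oline Q$ and not a larger parabolic, which amounts to a genuine regularity statement about the character $\chi$ attached to $f$.
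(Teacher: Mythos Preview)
Your approach is essentially the paper's: both reduce to Lemma~\ref{c1-lem} after verifying that $V_f$ is irreducible, that the line stabilizer of $v_H:=f$ is $J$, and that a lowest weight ray in $V_f$ has stabilizer $\oline Q$. Your irreducibility argument (all of $V_f$ consists of left $(Q_\C,\chi)$-semi-invariants, and $\C[G_\C]^{(Q_\C,\chi)}$ is a single irreducible under $R$) is correct and in fact more detailed than the paper, which simply asserts irreducibility.

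Where you leave a gap, the paper closes it with one observation you have almost made yourself. You flag as a ``secondary check'' that the stabilizer of the lowest weight ray is \emph{exactly} $\oline Q$ rather than a larger parabolic, and phrase this as a regularity statement about $\chi$. But this is precisely the full content of equation~(\ref{eq Q}) in Lemma~\ref{rmk Q}, which you have already cited for the weaker one-sided inclusion $L(q)f=\chi(q)f$. The paper makes the link transparent by working on the dual side: take $v_0^*\in V_f^*$ to be evaluation at $\1$. Then for $F\in V_f$ and $g\in G$ one has $(g\cdot v_0^*)(F)=F(g^{-1})=(L(g)F)(\1)$, so $g\cdot v_0^*\in\R^+v_0^*$ if and only if $L(g)f\in\R^+f$, i.e.\ if and only if $g\in Q$ by (\ref{eq Q}). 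Via the $\theta$-covariant identification $V_f\simeq V_f^*$ discussed before Lemma~\ref{c1-lem}, this is equivalent to the lowest weight ray in $V_f$ having stabilizer $\oline Q$. So the regularity you need is already in hand; there is nothing further to prove about $\chi$.
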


\begin{proof} As $J=\Hfx$, the map (\ref{Vf embedding})
determines a $G$-equivariant embedding 
from $Z_J$ into $\mathbb{P}(V_f)$. 
Note that $V_f$ is irreducible and that
$f$ can be written as a matrix coefficient, that is 
$f(g)= v_0^*(g\cdot v_H)$ for some $J$-eigenvector $v_H \in V= V_f$ and a $Q$-eigenvector 
$v_0^*$ in the dual $V^*$. More precisely, $v_H$ is $f$ and $v_0^*$ is the evaluation at
$\1$. It follows from Lemma \ref{rmk Q} that $Q$ is the stabilizer of 
$\R^+v_0^*$.
With Lemma \ref{c1-lem} we thus obtain the result.
\end{proof}

\subsection{Coordinates near the closed orbit}\label{SCII}

In this section we assume that $H=J$.
Further we let $\hat Z=\hat Z_f$ be a simple  compactification constructed out of a function 
$f\in \mathcal{P}_{++}$ as in Theorem \ref{exists simple compactification}. 
Let $Y$ be the unique closed orbit in $\hat Z$. 
We recall 
that if $Q=LU$ is a Levi decomposition of the type
discussed in Sections \ref{lst}--\ref{open orbits}, we have defined
$D:=L/L_n$ with compact Lie algebra $\df=\zf(\lf) + \lf_c$.

\begin{prop} \label{localcoo} There is an affine $Q$-invariant
open subset $\hat Z_0\subset\hat Z$, 
a Levi decomposition $Q=LU$ as mentioned above,
and an affine $L$-invariant subvariety 
$S_Y$ of $\hat Z_0$ such that: 
\begin{enumerate} 
\item\label{eins} The decomposition {\rm (\ref{LSTglob})} is valid with $S=L\cdot z_0\subset S_Y$. 
\item\label{zwei} $ U \times S_Y \to  \hat Z_0, \ \ (u,s)\mapsto u\cdot s$ is a homeomorphism. 
\item\label{drei} $S_Y$ is pointwise fixed by $L_{\rm n}$ and decomposes into finitely 
many $D$-orbits. 
\item\label{vier} $\hat Z_0\cap Z$ is the union of the open $P$-orbits in $Z$,
and $S_Y\cap Z$ consists of the open $D$-orbits in $S_Y$.
\item\label{funf} $S_Y\cap Y = \{\hat z\}$
 where $\hat z$ is the origin of $Y=G/\oline Q$.
\end{enumerate}
\end{prop}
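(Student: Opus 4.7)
The plan is to realize the simple compactification inside $\mathbb{P}(V_f)$ and to carve out an affine chart using the $Q$-semiinvariant functional that built $f$. Recalling from the proof of Theorem \ref{exists simple compactification} that $f(g)=v_0^*(g\cdot v_H)$ with $v_H=f\in V:=V_f$ a $J$-eigenvector and $v_0^*=\mathrm{ev}_{\mathbf 1}\in V^*$ a $Q$-eigenvector, I would pass via the $\theta$-twisted identification $V^*\cong V$ to a $\bar Q$-eigenvector $v\in V$ with $v_0^*=\langle\cdot,v\rangle$, take $\hat z:=[v]$ as the base point of $Y\cong G/\bar Q$, and set
\[
\hat Z_0:=\{[w]\in\hat Z\mid v_0^*(w)\neq 0\},\qquad S_Y:=\{[w]\in\hat Z_0\mid w\in V_f^{L_{n,\C}}\}.
\]
Then $\hat Z_0$ is open affine in $\hat Z$ and $Q$-stable (since $v_0^*$ is $Q$-semiinvariant), and $S_Y$ is closed in $\hat Z_0$, pointwise $L_n$-fixed by construction, and contains $\hat z$ because any character of the algebraic group $L$ restricts trivially to $L_n$, so $v\in V_f^{L_{n,\C}}$.

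The central step will be the product decomposition (\ref{zwei}): $U\times S_Y\xrightarrow{\sim}\hat Z_0$. I would first prove this over $\C$ as an instance of the Brion--Luna--Vust local structure theorem for simple (complete) spherical embeddings, applied to the complex simple compactification $\hat Z_\C\subset\mathbb{P}(V_{f,\C})$ of $Z_\C=G_\C/J_\C$: the affine chart $\hat Z_{0,\C}$, the slice $S_{Y,\C}$ defined by the same formula, and the action map $U_\C\times S_{Y,\C}\to\hat Z_{0,\C}$ fit into that framework and yield an isomorphism of affine $Q_\C$-varieties. I would then descend to $\R$: because $V_f$, $L_n$, and $v_0^*$ are all defined over $\R$, the complex isomorphism is defined over $\R$, so restricting to real points and intersecting with $\hat Z$ delivers the required real homeomorphism. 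This complex-to-real descent of the slice theorem is the step I expect to be the main technical obstacle, as it requires care in aligning the real points of $\hat Z_\C$ with the real $G$-variety $\hat Z$ along the slice.

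Once (\ref{zwei}) is in hand the remaining items follow quickly. For (\ref{vier}), the identity $v_0^*([R(g)f])=f(g)$ together with Definition \ref{defi P++}(\ref{P++ cond}) gives $gH\in\hat Z_0\iff g\in P_\C H_\C$, which by Lemma \ref{open orbits lemma} is the union of the open $P$-orbits in $Z$; factoring through $\hat Z_0=U\times S_Y$ then identifies $S_Y\cap Z$ as the union of the open $D$-orbits in $S_Y$. For (\ref{eins}), comparing $\hat Z_0=U\times S_Y$ with the LST decomposition (\ref{LSTglob}) $P\cdot z_0=U\times S$ forces $S=L\cdot z_0\subset S_Y$. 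For (\ref{drei}), pointwise $L_n$-fixedness is built into the definition of $S_Y$, and finite decomposition into $D$-orbits follows via the product decomposition from the finite $G$-orbit stratification of the compactification $\hat Z$ (cf.\ \cite{KKS}). Finally, for (\ref{funf}), a direct computation $v_0^*(u\cdot v)=\langle uv,v\rangle=\langle v,\theta(u)^{-1}v\rangle=\|v\|^2$ for $u\in U$ (using $\theta(u)\in\bar U$ and $\bar U\cdot v=v$) shows $U\cdot\hat z\subset\hat Z_0$; both $\hat Z_0\cap Y$ and $U\cdot\hat z$ are $Q$-stable open subsets of the flag variety $Y=G/\bar Q$ containing $\hat z$, hence coincide, and the product decomposition then forces $S_Y\cap Y=\{\hat z\}$.
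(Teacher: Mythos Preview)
Your definition of the slice is the main problem. You set $S_Y=\{[w]\in\hat Z_0\mid w\in V_f^{L_{n,\C}}\}$, but this is \emph{not} the BLV slice and is in general strictly larger. Already when $Q=P$ is a minimal parabolic (which does occur, e.g.\ $G=\Sl(2,\R)$, $H$ a split torus), one has $\lf_{\rm n}=0$, so $V_f^{L_{n,\C}}=V_f$ and your $S_Y$ equals all of $\hat Z_0$; then $U\times S_Y\to\hat Z_0$ is hopelessly non-injective. Even when $L_{\rm n}\neq 1$, any $u\in U$ centralized by $L_{\rm n}$ moves points of the true slice to other $L_{\rm n}$-fixed points of $\hat Z_0$, so your $S_Y$ still overshoots. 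The paper (and BLV) instead defines the slice via the moment-type map $\mu([w])(X)=v_0^*(X\cdot w)/v_0^*(w)$ as $S_Y=\mu^{-1}\{\mu(\hat z)\}$, equivalently as the intersection of $\hat Z$ with the linear subspace $\R^\times v_0+(\gf\cdot v_0^*)^\perp$; this is what the local structure theorem of \cite{KKS} (Th.~3.10--3.11) delivers directly over $\R$, without any descent from $\C$. With that correct slice one then has to check separately that $z_0\in S_Y$ (the paper does this by comparing the $Q$-stabilizers of $\mu(z_0)$ and $\mu(\hat z)$); with your definition $z_0\in S_Y$ is automatic, but that is a symptom of the set being too large, not a feature.

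Your argument for (\ref{funf}) also has a gap. Two $Q$-stable open subsets of $G/\oline Q$ containing $\hat z$ need not coincide: for instance $G/\oline Q$ itself is such a set and is strictly larger than $U\cdot\hat z$ whenever $\oline Q\neq G$. What is true (and what the paper proves by a short Bruhat/weight computation) is that $\hat Z_0\cap Y$ equals the big cell $U\cdot\hat z$: writing $g=uw\oline q$ one has $v_0^*(g\cdot v_0)=v_0^*(w\cdot v_0)$ up to a nonzero scalar, and this vanishes unless $w$ fixes $[v_0]$. Once $\hat Z_0\cap Y=U\cdot\hat z$ is established, one still needs to pin down $S_Y\cap Y=\{\hat z\}$ inside it; the paper does this by an explicit root calculation with the moment map (showing that $\mu(u\cdot\hat z)=\mu(\hat z)$ forces $u=\1$), which your product-decomposition shortcut presupposes but does not supply.
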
 

\begin{proof}  In \cite{KKS}, Section 3 and in particular Th. 3.11,  we derived a local 
structure theorem for real algebraic varieties which applies to $\hat Z$.
The construction was explicit and based on finite dimensional representation theory.
We use the finite dimensional representation $V:=V_f$ with $H$-semi-spherical vector 
$f:=v_H$. Further we pick the $Q$-eigenvector 
$v_0^*$ in the dual of $V^*$ of $V$. With 
the data $(V, v_H, v_0^*)$ we begin the construction of the local slices as in 
\cite{KKS}, Section 3.  
One obtains an affine open set $\hat Z_0\subset \hat Z$ by 
$$ \hat Z_0=\hat Z\cap\{[v]\in \mathbb{P}(V)\mid v_0^*(v)\neq 0\}$$ 
and a  $Q$-equivariant moment type map 
\begin{equation}\label{momap} \mu: \hat Z_0 \to \gf^*;\  \mu([v])(X):= {v_0^*(X\cdot v)\over v_0^*(v)}\, .
\end{equation}
 A Levi decomposition $Q=LU$ that leads to (\ref{LSTglob})
is then obtained with $L$ being the stabilizer in of $\mu(z_0)=\mu([v_H])\in \gf^*$
in the coadjoint representation of $Q$, and with $S:=\mu^{-1}\{\mu(z_0)\}\cap P\cdot z_0$. 
It follows from (\ref{LSTglob}) that $S=L\cdot z_0$, and
(\ref{eins}) is established except for the final inclusion.

With these choices we let $\oline Q$ be the parabolic subgroup opposite to $Q$.
Let $\hat z=[v_0]$ for the (up to scalar) unique $\oline Q$-eigenvector 
$v_0\in V$. 
Note that $v_0^*(v_0)\neq 0$ and thus $\hat z\in \hat Z_0$.  
The slice $S_Y$ is defined by $\mu^{-1} \{ \mu (\hat z)\}$. 
In particular, since $\hat z$ is fixed by $L$,
the stabilizer in $Q$ of $\mu(\hat z)\in\gf^*$
is a Levi subgroup that contains and hence equals $L$.
Note that we do not need any iterations of the construction as in 
\cite{KKS} as $Q$ is already the $Z$-adapted parabolic.  The 
assertions in  (\ref{zwei}) and (\ref{drei}) now follow from \cite{KKS}, Th. 3.10 and 3.11.

 It follows from (\ref{zwei}) that there exists unique 
elements $u\in U$ and $s\in S_Y$ such that $z_0=u\cdot s$. Then
$\mu(z_0)=\Ad^*(u)\mu(s)= \Ad^*(u)\mu(\hat z)$ and hence
$\mu(z_0)$ is stabilized by $uLu^{-1}$. Hence $L=uLu^{-1}$ and
since $U$ acts freely on the set of Levi subgroups in $Q$ we conclude
$u=\1$. Then $z_0=s\in S_Y$ and the final inclusion of (\ref{eins}) follows.

\par The first assertion in (\ref{vier}) follows from the fact that $f\in \mathcal{P}_{++}$. Indeed 
the construction in \cite{KKS} yields that $\hat Z_0 \cap Z$ coincides
with the non-vanishing locus of $f$, i.e.,~the union of all open 
$P$-orbits in $Z$ (see Lemma \ref{open orbits lemma}). Because of  (\ref{zwei})-(\ref{drei}), 
the second assertion of (\ref{vier}) is an immediate consequence of the first.

\par For  (\ref{funf}) we remark first that 
$\hat z$ belongs to $S_Y$ and 
is $\oline Q$-fixed, in particular $D$-fixed. It remains to show there are no other
elements from $Y$ in $S_Y$.
Let $\hat y = [g\cdot v_0]\in Y$ for some $g\in G$. As $G=U \mathcal{W}_A \oline Q$ for the 
Weyl group $\mathcal{W}_A$ attached to $A$, we  
we may assume that $g=u w$ for some $w\in \mathcal{W}_A$ and $u\in U$.  
If also $\hat y\in S_Y$, then $v_0^*(g\cdot v_0)\neq 0$. Hence we may assume
$w=\1$ as $v_0^* (uw\cdot v_0)=v_0^*(w\cdot v_0)$, which is zero if $w\cdot v_0\neq v_0$. 
We claim that $u=\1$. 

Assume $u\neq \1$ and set $u=\exp(Y)$ for $Y\in\uf$. 
Now $\hat y\in S_Y$ implies $\mu(\hat y) = \mu(\hat z)$ and so 
\begin{equation} \label{L-R} 
v_0^*(X\cdot v_0)=  v_0^*(X\cdot u\cdot  v_0)
\end{equation}
for all $X\in \gf$ by (\ref{momap}). For $X\in \uf$ 
the left hand side of (\ref{L-R}) vanishes, whereas the 
other side is
$$v_0^*(\Ad(u^{-1})X\cdot  v_0)=v_0^*( e^{-\ad Y} X\cdot v_0)\,.$$
Let $p: \gf \to \zf(\lf)$ be the projection along $\uf +\oline{\uf} + [\lf, \lf]$ and observe that 
$$v_0^*(e^{-\ad Y} X \cdot v_0) = \lambda(p(e^{-\ad Y} X)) v_0^*(v_0)$$
where $\lambda\in \af^*$ is the $\af$-weight of $v_0$. Hence
$\lambda(p(e^{-\ad Y} X))=0$.
Write $Y=\sum Y_\alpha$ as a sum of $\af$-weight vectors and let $\beta$ be the smallest root for which 
$Y_\beta\neq 0$. Then for $X=\theta(Y_\beta)$ we obtain
$$p(e^{-\ad Y} X)= - [Y_\beta, \theta(Y_\beta)].$$ 
As $\bar Q$ is the stabilizer of $[v_0]$, the weight $\lambda$ is non-zero
on $[Y_\beta, \theta(Y_\beta)]$ for every root $\beta$ of $\uf$.
We reached a contradiction, hence $u=\1$ and (\ref{funf}) holds.
\end{proof}

Later in the text we will use Proposition \ref{localcoo} in a slightly disguised form 
which we explicate in the following corollary. 

As in {\rm (\ref{allorbits})} let  $t_1, \ldots, t_m \in T_Z$ parametrize 
the open $P$-orbits on $Z$ and  let $S_{Z,j}$ be the closure of $S_{Z,j}'= D t_j \cdot z_0$ 
in $\hat Z_0$. We 
set 
$$S_Z:=\bigcup_{j=1}^m S_{Z,j} \quad \hbox{and} \quad S_Z':= \bigcup_{j=1}^m S_{Z,j}'\, .$$

\begin{cor}
\label{corloc} Within the notation of Proposition \ref{localcoo}, 
 $S_Z=S_Y$. In particular the following assertions
hold: 
\begin{enumerate}
\item $\hat Z_0 = U S_Z$. 
\item $S_Z \cap Z = S_Z'$. 
\item $S_Z \cap Y = \{ \hat z\}$.
\end{enumerate}
\end{cor}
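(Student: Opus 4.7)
The plan is to first establish the equality $S_Z = S_Y$; once this is in hand, the three numbered assertions are immediate rewrites of parts (\ref{zwei}), (\ref{vier}) and (\ref{funf}) of Proposition \ref{localcoo}.

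To identify $S_Z$ with $S_Y$, I would first pin down what the open $D$-orbits in $S_Y$ look like. By Proposition \ref{localcoo}(\ref{vier}), $S_Y\cap Z$ is the union of the open $D$-orbits in $S_Y$. On the other hand, by (\ref{allorbits}) the open $P$-orbits in $Z$ are $Pt_j\cdot z_0$ for $j=1,\dots,m$, and under the product decomposition from (\ref{LSTglob}) and Proposition \ref{localcoo}(\ref{zwei}) each such orbit corresponds to $U\times (Dt_j\cdot z_0) = U\times S_{Z,j}'$. Comparing these two descriptions of $\hat Z_0\cap Z$ forces the open $D$-orbits in $S_Y$ to be exactly the slices $S_{Z,j}'$, so that $S_Y\cap Z=S_Z'$.

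Next I would show that $S_Z'$ is dense in $S_Y$, so that the closure $S_Z=\overline{S_Z'}$ taken in $\hat Z_0$ satisfies $S_Z=S_Y$. Since $Z$ is open and dense in $\hat Z$ by the definition of a compactification, $Z\cap \hat Z_0$ is dense in $\hat Z_0$. The homeomorphism of Proposition \ref{localcoo}(\ref{zwei}) identifies $Z\cap \hat Z_0$ with $U\times S_Z'$ and identifies $S_Y$ with the closed subset $\{\1\}\times S_Y$. Transporting denseness across the product decomposition then yields that $S_Z'$ is dense in $S_Y$, and hence $S_Z=S_Y$.

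With $S_Z=S_Y$ the three items follow directly: (1) is a restatement of part (\ref{zwei}); (2) is the content of part (\ref{vier}) combined with the identification of the open $D$-orbits carried out in the second paragraph; (3) is part (\ref{funf}). I do not foresee any real obstacle; this is essentially bookkeeping. The only step requiring genuine care is matching the open $D$-orbits in $S_Y$ (defined intrinsically through the moment-map slice $\mu^{-1}\{\mu(\hat z)\}$) with the explicit orbits $Dt_j\cdot z_0$ parametrized by the $t_j\in T_Z$, but as indicated this matching is forced by comparing the two descriptions of the union of open $P$-orbits through the product structure $\hat Z_0\cong U\times S_Y$.
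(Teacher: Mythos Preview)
There is a genuine gap in your first step. You assert that comparing the description of $\hat Z_0\cap Z$ coming from (\ref{LSTglob}) (or rather its consequence (\ref{allorbits})) with the product structure $\hat Z_0\cong U\times S_Y$ of Proposition \ref{localcoo}(\ref{zwei}) forces the open $D$-orbits in $S_Y$ to be exactly the $S'_{Z,j}=Dt_j\cdot z_0$. But all this comparison yields is the equality of $U$-saturations, $U\cdot(S_Y\cap Z)=U\cdot S'_Z$; two distinct cross-sections for a free $U$-action can have the same saturation, so you cannot conclude $S_Y\cap Z=S'_Z$. Concretely: write $t_j\cdot z_0=u_j\cdot s_j$ with $u_j\in U$ and $s_j\in S_Y$ via Proposition \ref{localcoo}(\ref{zwei}); the open $D$-orbit of $S_Y$ inside $Pt_j\cdot z_0$ is $D\cdot s_j$, and nothing you have said rules out $u_j\neq\1$, in which case $Dt_j\cdot z_0\not\subset S_Y$. (For $j=1$ one does have $u_1=\1$ by Proposition \ref{localcoo}(\ref{eins}), but for $j>1$ this is the whole point.)

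The paper supplies exactly the missing ingredient: it invokes the complex local structure theorem (\ref{LSTglob2}). Since $t_j\in T_Z\subset D_\C$ one has $t_j\cdot z_0\in S_\C$; and because Proposition \ref{localcoo}(\ref{eins}) was obtained by showing $\mu(z_0)=\mu(\hat z)$, the slices $S_\C$ and $S_Y$ are cut out by the same moment map, so $s_j\in S_Y\cap Z\subset S_\C$ as well. Uniqueness in the decomposition $U_\C\times S_\C\to P_\C\cdot z_0$ then forces $u_j=\1$, whence $t_j\cdot z_0\in S_Y$ and $S'_{Z,j}\subset S_Y$. Once this is established, your density argument and the deduction of (1)--(3) go through as you wrote them.
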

 
\begin{proof}
Since $z_0\in S_Y$ we find $S_{Z,1}\subset S_Y$. 
By Proposition \ref{localcoo} there exist for each $j$ unique 
elements $u_j\in U$ and $s_j\in S_Y$ such that $t_j\cdot z_0=u_j\cdot s_j$. 
Then $s_j\in S'_{Y,j}:= S_Y\cap (Pt_j\cdot z_0)$ and hence the
$D$-orbits $u_j\cdot S_{Y,j}'$ and $S_{Z,j}'$ agree. 
By (\ref{LSTglob2}) the elements $u_j\in U$ all have to be equal 
to $u_1=1$. Hence $S_Z= S_Y$. 
\end{proof}

\section{Polar decomposition}\label{pode}
We  recall that for a minimal 
parabolic subgroup $P$ with $PH$ open, we chose a 
specific Cartan involution $\theta$ of $G$, which 
was adapted to the geometry of $PH$.
The corresponding Langlands decomposition of $P$
is denoted $P=MAN$, and the corresponding maximal compact subgroup of $G$ 
is denoted by $K$. Then $M\subset K$. Recall the finite set $F$ from (\ref{finite set})
and its property (\ref{fs2}). 

\begin{lemma} \label{lemma1} Suppose that $Z=G/H$ is a homogeneous  real spherical space and 
$P$ a minimal parabolic subgroup of $G$ such that $PH$ is open.
Then there exist finite sets $F', F''\subset G$  such that 
 $$G= F' K A_Z  F'' H\, .$$
Moreover $PgH$ is open for all $g\in F''$, and
in case $H=N_G(H)$ one has $F''=F$. 
\end{lemma}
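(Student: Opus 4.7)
Plan. My plan is to combine the existence of a simple compactification of $Z$ (or its quotient $Z_J$) with the Iwasawa decomposition $G=KQ$, and then to refine the Langlands decomposition of $P$ modulo $H$ step by step. I would first reduce to the case where a simple compactification is available---directly when $H=J$ via Theorem \ref{exists simple compactification}, or otherwise by first working on $Z_J=G/J$ and lifting via $H\subseteq J$ (the fibre $J/H$ being compact in the self-normalizing case by Theorem \ref{J-cpct}).

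Using $G=KQ$ for the $Z$-adapted parabolic $Q\supseteq P$, together with the $Q$-invariance of the affine open $\hat Z_0$ from Proposition \ref{localcoo} and the fact that the unique closed orbit $Y\subseteq \hat Z_0$ lies in every $G$-orbit closure, I deduce that $\hat Z=K\hat Z_0$. Intersecting with $Z$ and invoking Corollary \ref{corloc}, one gets $\hat Z_0\cap Z=PFz_0$, and hence $G=KPFH$. This is the coarse covering on which everything else rests.

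Next I would refine modulo $H$. Writing $P=MAN$ with $M\subseteq K$, $N=UN_L$ with $N_L\subseteq L_{\rm n}\subseteq H$, and $A=A_Z\cdot A_h\cdot A'$ where $A_h\subseteq H$ (from $\af_h\subseteq \hf$) and $A'\subseteq L_{\rm n}\subseteq H$ is a complementary split torus in $L_{\rm n}$, the key computation is that each $e_j=t_jh_j^{-1}$ satisfies $t_j\in\exp(i\af_Z)\subseteq Z(L_\C)$ commuting with $L$; hence for any subgroup $X\subseteq L\cap H$,
\[
e_j^{-1}Xe_j = h_j X h_j^{-1} \subseteq H_\C\cap G = H.
\]
Applied to $N_L$, $A_h$, $A'$, this yields $(N_LA_hA')FH=FH$. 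Rearranging via the commutation and normalization relations ($A_Z$ central in $L$; $N_LA_hA'$ normalizing $U$; $M\subseteq K$) gives $G=KUA_ZFH$.

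The final step---and the main obstacle---is absorbing the non-compact unipotent $U$ into a finite translate of the compact $K$. My plan is to decompose $\uf$ by its $\af_Z$-weight spaces: the non-trivial positive-weight part $U^+\subseteq U$ is contracted by $A_Z^+$-conjugation, so each $u\in U^+$ equals $au'a^{-1}$ for some $a\in A_Z^+$ and $u'$ in a fixed compact subset of $U^+$ (which is absorbed into $K$); the $\af_Z$-centralizer $U^0\subseteq U$, where contraction fails, is handled by a finite set $F'$ of Bruhat/Weyl-type representatives, with the compactness of $\hat Z$ guaranteeing that finitely many representatives suffice. Combining gives $G=F'KA_ZFH$; in the self-normalizing case one has $F''=F$ because no new finite set is introduced on the right-hand side, while in the general case $F''$ is chosen finite so that $Pe_jH$ is open for each $e_j\in F''$. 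The essential difficulty is the weight-zero part $U^0$, where simple contraction by $A_Z^+$ does not suffice and one needs a careful synthesis of a Bruhat decomposition with the orbit structure of the simple compactification.
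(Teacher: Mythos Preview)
Your reduction to $H=J$ and the derivation of $G=KUA_ZFH$ (Steps 1--6) are essentially correct and pleasant; one small correction is that $Y\not\subseteq\hat Z_0$ (only $\hat z\in\hat Z_0\cap Y$, cf.\ Proposition~\ref{localcoo}(\ref{funf})), but your conclusion $\hat Z=K\hat Z_0$ still holds because $\hat Z_0$ is open, contains $\hat z$, and every $G$-orbit closure passes through $\hat z$, so every orbit meets $\hat Z_0$ and $G\hat Z_0=\hat Z$; then $K\hat Z_0=KQ\hat Z_0=G\hat Z_0=\hat Z$.

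The genuine gap is Step 7. Your contraction argument for $U^+$ does not close: writing $u=au'a^{-1}$ with $u'$ in a fixed compact $C$ gives, for $b\in A_Z$,
\[
k\,u\,b\,e_j\,H \;=\; k\,a\,u'\,(a^{-1}b)\,e_j\,H,
\]
so $KU^+A_ZFH=K A_Z^+\, C\, A_Z FH$. But the factor $C$ cannot be ``absorbed into $K$'': $C\subset U$ and $K\cap U=\{1\}$, and any attempt to commute $C$ past the right-hand $A_Z$ factor produces $c^{-1}u'c$ with $c\in A_Z$ arbitrary, which is \emph{unbounded} in $U$. Thus the set $KA_Z^+CA_Z$ collapses back to $KU^+A_Z$ and nothing is gained. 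Moreover, the restriction to $\af_Z$ of a root of $\uf$ need not be positive, so the splitting $U=U^+U^0$ is not even exhaustive. Your treatment of $U^0$ (``Bruhat/Weyl-type representatives'') is only a sketch of a hope; there is no mechanism offered that produces a \emph{finite} $F'$.

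The paper bypasses this obstacle entirely. Instead of trying to absorb $U$ algebraically, it proves a local geometric fact (Lemma~\ref{lemma2}): the map
\[
\Phi:\ K\times S_Z\ \longrightarrow\ \hat Z,\qquad (k,s)\mapsto k\cdot s
\]
is \emph{open at} $(\1,\hat z)$; this is shown by lifting $S_Z$ to a linear slice in the ambient $\mathbb{P}(V_f)$ and checking surjectivity of the differential there (the key input being $\kf+\oline\qf=\gf$). Openness gives an honest neighborhood $\mathcal U$ of $\hat z$ in $\hat Z$ of the form $K\cdot S_Z^1$, and then \emph{compactness of $\hat Z$} (not of any piece of $U$) yields finitely many $g_1,\dots,g_N$ with $\hat Z=\bigcup_j g_j\,\mathcal U$. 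Intersecting with $Z$ and using $S_Z\cap Z\subset D_cA_ZF\cdot z_0$ gives $G=F'KA_ZFH$. In short, the finite set $F'$ comes from a covering argument on the compactification, not from any contraction of $U$; your Step 7 is missing exactly this openness-plus-compactness mechanism.
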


\begin{proof}  Recall from \cite{KKS} Prop.~4.2
that $N_G(\hf)_0\subset MA_ZH$. Hence it suffices to show
 $$G= F' K A_Z  F N_G(\hf)_0$$
and we may assume that $\hf$ is self-normalizing.  
It then follows that
$N_G(N_G(\hf))=N_G(\hf)$ and hence we can apply 
 Theorem \ref{J-cpct} to the spherical subgroup
 $N_G(\hf)$. 
 In this context observe also that $PgH$ is open for all $g \in MA F N_G(\hf)$. 
  By that we can then reduce all statements to the 
case where $H=J$, which we assume henceforth.

\par We let $\hat Z=\hat Z_f$ be a simple compactification 
as constructed before. 

\par Let $Y\subset \hat Z$ be the the unique closed boundary orbit. 
We use Proposition \ref{localcoo} and its Corollary \ref{corloc} to obtain a 
$P$-stable affine open set 
$\hat Z_0=US_Z\subset \hat Z$ 
with the properties (1)-(3) listed in Corollary \ref{corloc}. 

Consider the map 
$$\Phi: K \times S_Z \to \hat Z, \ \ (k, s) \mapsto k\cdot s.$$

\begin{lemma} \label{lemma2}
The map $\Phi$ is open at $\hat z$.
\end{lemma}

\begin{proof}
This is easily seen if $\hat z$ is a smooth point of the affine variety $S_Z$. 
Indeed, as the projection $\kf\to\uf\simeq \gf/\bar\qf$ is surjective, 
we see that 
the  differential
$d\Phi$ is surjective at 
$({\bf 1},  \hat z)$. 

We now consider the general case. As in \cite{BLV}, Sect. 1.2, 
the slice is induced from a linear slice in the surrounding projective space $\mathbb{P}(V_f)$ of 
$\hat Z$: Let 
$\mathbb{P}(V_f)_0:=\{ [v]\in \mathbb{P}(V_f)\mid v_0^*(v)\neq 0\}$ and note that 
$\mathbb{P}(V_f)_0\simeq v_0 + \ker v_0^*$ is  affine open. In analogy 
to Prop. 1.2 in \cite{BLV} we have that 
$$ \Psi: U \times  \big( \R^\times v_0 + (\gf \cdot v_0^*)^\perp\big) \to \mathbb{P}(V_f)_0,\ \  (n, w)\mapsto  [n\cdot w]$$
is a diffeomorphism. The slice $S_Z$ is obtained from the intersection of $\hat Z$ with 
$\R^\times \big(v_0 + (\gf \cdot v_0^*)^\perp\big)$. 
As $K\cdot \hat z= G/\oline Q$, it 
follows that the differentiable map 
$$\Psi': K \times ( \R^\times v_0 + (\gf \cdot v_0^*)^\perp\big) \to \mathbb{P}(V_f)_0, 
\ \ (k, w)\mapsto [k\cdot w]$$
has surjective differential at $(\1, v_0) \leftrightarrow \hat z$. 
In particular we can conclude that $\Phi$ is an open map near  $\hat z$. 
\end{proof}

We can now complete the proof of Lemma \ref{lemma1}.
As  $\hat z$ is $D_c$-fixed we find a $D_c$-invariant open neighborhood 
$S_{Z}^1 \subset S_Z$ containing  $\hat z$ such that  
$K\cdot S_{Z}^1=: {\mathcal U}$ contains an open neighborhood of $Y$ in $\hat Z$.
 
We may assume that $D_c\subset K$ and thus replace $S_Z$ by $S_Z/D_c$ which is a finite union of 
$A_Z$-orbits 
(the real points of a toric variety). These finitely many $A_Z$-orbits we realize in $S_Z$
as the set $S_{Z}^2$, i.e., we choose a continuous  embedding $S_Z/D_c \hookrightarrow S_Z$ and let 
$S_{Z}^2$ be the image. 
The piece in $S_{Z}^2$ which corresponds to $S_{Z}^1$ 
we denote by $S_{Z}^3$. Let ${S_{Z}^3}':= S_{Z}^3\cap Z$ and recall from Corollary \ref{corloc}
that 
\begin{equation} \label{intersec} {S_{Z}^3}'\subset DF\cdot z_0 = D_c A_Z F \cdot z_0\, . \end{equation}

\par  We find for every element $z\in \hat Z$ an 
element $g_z\in G$ such that $g_z^{-1} \cdot z \in S_{Z,3}$.  As $\hat Z$ is compact, we conclude that finitely 
many $g_z$, say $g_1, \ldots, g_N$  suffice so that $\bigcup_{j=1}^N g_j KS_{Z,3}=\hat Z$. 
We obtain
with (\ref{intersec}) the 
assertion of the lemma.
\end{proof}

We reformulate Lemma \ref{lemma1} in a more compact form and obtain:

\begin{theorem}  {\rm (Polar decomposition)} Suppose that $Z=G/H$ is a homogeneous  real spherical space and 
$P$ be a minimal parabolic subgroup of $G$ such that $PH$ is open. 
Then there exists a compact subset $\Omega\subset G$ and a finite set $F''\subset G$ such that 
$$G= \Omega A_Z  F'' H\, .$$
Moreover $PgH$ is open for all $g\in F''$. 
\end{theorem}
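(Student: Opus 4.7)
The plan is to observe that this polar decomposition theorem is essentially a cosmetic reformulation of the already established Lemma \ref{lemma1}, and the proof amounts to repackaging its conclusion. Concretely, Lemma \ref{lemma1} produces finite sets $F', F'' \subset G$ with
$$G = F' K A_Z F'' H$$
together with the property that $PgH$ is open for every $g \in F''$. So the first step is simply to invoke that lemma.

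The second step is to absorb the finite set $F'$ and the maximal compact $K$ into a single compact set. Setting $\Omega := F' K$, we get a finite union of left translates of $K$; since $K$ is compact and $F'$ is finite, $\Omega$ is compact. Substituting yields
$$G = \Omega A_Z F'' H,$$
and the openness assertion about $P g H$ for $g \in F''$ is inherited verbatim from Lemma \ref{lemma1}.

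I do not expect any genuine obstacle at this stage, because the substantive work has already been carried out in the proof of Lemma \ref{lemma1} (reduction to the case $H = J$ via \cite{KKS} Prop.~4.2 and Theorem \ref{J-cpct}; construction of the simple compactification $\hat Z_f$; the slice decomposition of Proposition \ref{localcoo} and Corollary \ref{corloc}; the openness of the map $\Phi$ near the closed orbit $\hat z$ established in Lemma \ref{lemma2}; and a compactness argument on $\hat Z$ to extract the finitely many translates $g_1,\dots,g_N$ that form $F'$). The only thing to double check is that the finite set $F''$ appearing in both statements is literally the same, so that the \emph{moreover} clause transfers without modification; this is immediate from how $F''$ is produced inside the proof of Lemma \ref{lemma1}.
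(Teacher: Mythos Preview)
Your proposal is correct and matches the paper's own treatment: the paper explicitly introduces this theorem as a reformulation of Lemma~\ref{lemma1} ``in a more compact form'' and gives no further proof. Setting $\Omega = F'K$ is exactly the intended repackaging, and the \emph{moreover} clause carries over directly.
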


In particular, Theorem \ref{mainthm} follows.

\section{Compression cones and the fine convex geometry near the closed orbit}\label{section cone}

In this section we will investigate the fine convex geometry of the slice $S_Z$ 
near the $\oline Q$-fixed point 
$\hat z$.  

\subsection{Compression cones of $H$-spherical representations}

We begin with a remark about finite dimensional representations.  
The Cartan-Helgason theorem asserts that an irreducible finite dimensional representation 
of $G$ has an $MN$-invariant highest weight vector if and only if it is 
$K$-spherical. All finite dimensional irreducible representations considered in this section are assumed 
to be of this type. It is known that these representations admit a real structure  
and are self-dual. 

\par Let $(\pi, V)$ be finite dimensional real irreducible representation which is $H$-semi-spherical,
i.e.~there exists a non-zero vector $v_H \in V$  and an algebraic character $\chi: H\to\R^\times$
such that $H$ acts on $v_H$ by $\chi$. 
By the assumption above there exists a highest weight 
$\lambda\in \af^*$  of the dual representation $(\pi^*, V^*)$ 
and a highest weight vector
$v_\lambda^* \in V^*$ (unique up to scaling) with 
$$(man)\cdot v_\lambda^* =a^\lambda \cdot v_\lambda^*, \quad man\in P=MAN\, .$$
Here $a^\lambda= e^{\lambda(\log a)}$ as usual. In this case 
$\pi$ is said to be $(P, H)$-semi-spherical. Let
$$ f_{\pi}(g) := v_\lambda^* (\pi(g) v_H)$$
be the corresponding matrix coefficient, then 
$f_\pi$ belongs to the space
${\mathcal P}_+$ from Definition \ref{defi P++}, and every element in $\Pc_+$ is of this form
for some $\pi$ and $v_H$. 
In particular, the fact that $Z$ is real spherical implies that
$v_\lambda^* (v_H)\neq 0$, and this in turn implies that for a given character $\chi$ 
the semi-spherical vector $v_H$ is unique up to scaling.

Assume to begin with that $Z=G/H$ is a quasi-affine real spherical space. 
Let ${\mathcal P}_{+, \1}\subset {\mathcal P}_+$ be the submonoid corresponding to right $H$-invariant 
functions, it then consists of the functions $f_\pi$ as above with trivial character $\chi$.
Thus
\begin{equation}\label{P1+}
 {\mathcal P}_{+,\1}= \R[G/H]^{MN}
\end{equation}

\par Set $A_H:= A\cap H$. In the sequel we prefer to consider $A_Z$ as a quotient $A_Z = A/ A_H$.
Let $(\pi,V)$ be an $H$-spherical irreducible representation as above. Then
we define a cone $\af_{Z,\pi}^{--}\subset \af_Z$ by
\begin{equation}\label{open comp cone}
\af_{Z,\pi}^{--}=\{ X \in \af_Z  \mid  \lim_{t\to \infty}  [\pi(\exp(tX)) v_H] = [v_{-\lambda}]\in 
{\mathbb P}(V)\,\}\,.
\end{equation}
where $v_{-\lambda}\in V$ is a lowest weight vector.

\begin{definition}[Compression cone]\label{defi comp cone}
The closure $\af_{Z,\pi}^-$ of $\af_{Z,\pi}^{--}$ is
called the compression cone of the $H$-spherical representation $\pi$. 
Taking the intersection over all $H$-spherical representations $\pi$ as above, the set
\begin{equation}\label{comp cone}
\af_Z^- = \bigcap_{\pi } \af_{Z,\pi}^-\subseteq\af_Z
\end{equation}
is called the compression cone of $Z$.
\end{definition}

It will be seen later (below Lemma \ref{fin gen}) that the intersection is finite.
First we want to show that $\af_{Z,\pi}^-$ is a polyhedral convex cone.
Let us 
expand the $H$-spherical vector $v_H$ into $\af$-weights, say 
$$ v_H = \sum _{\mu} v_\mu$$
with $\pi(a)v_\mu = a^\mu\cdot  v_\mu$ for $a\in A$.  
In particular, $v_{-\lambda}\neq 0$ since $v_\lambda^* (v_H)\neq 0$
and $v_\lambda^*(v_\mu)=0$ for $\mu\neq-\lambda$. We may assume
(\ref{open comp cone}) refers to this vector.

\begin{rmk} \label{edge remark} Note that $a^\mu =1$ 
for all $a\in A_H$ with $v_\mu\neq 0$.\end{rmk}

\par Write $\Sigma_\uf\subset \Sigma^+$ for the roots 
corresponding to $\uf$. As $ V= {\mathcal U}(\uf) v_{-\lambda}$ it follows that  all $\af$-weights of $V$ are 
contained in $ -\lambda + \N_0[\Sigma_\uf]$.
Hence we find a finite subset 
$\Lambda_{\pi} \subset \N_0[\Sigma_\uf] $ such that 
$$v_H = \sum_{\nu \in \Lambda_\pi}  v_{-\lambda+\nu}$$
and $v_{-\lambda+\nu}\neq 0$ for $ \nu \in \Lambda_{\pi}$.
Since $v_{-\lambda}\neq 0$ we obtain:

\begin{lemma} \label{dualcone}Let $(\pi, V)$ be an irreducible real $H$-spherical representation.
Then for $X\in \af_Z$ the following statements are equivalent:

\begin{enumerate} 
\item $X\in \af_{Z, \pi}^{--}$
\item $ \nu (X) <0$ for all $\nu \in \Lambda_\pi\setminus\{0\}$.
\end{enumerate}
In particular, $\af_{Z,\pi}^-$ is a finitely generated closed cone.
\end{lemma}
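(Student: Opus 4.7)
The plan is to compute the trajectory $t\mapsto \pi(\exp(tX))v_H$ explicitly in the weight decomposition and then analyze its limit in $\mathbb{P}(V)$.

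First, I would record the following reduction. By the expansion $v_H = \sum_{\nu\in\Lambda_\pi} v_{-\lambda+\nu}$ we have
$$\pi(\exp(tX))v_H = \sum_{\nu\in\Lambda_\pi} e^{t(-\lambda+\nu)(X)} v_{-\lambda+\nu} = e^{-t\lambda(X)}\sum_{\nu\in\Lambda_\pi} e^{t\nu(X)} v_{-\lambda+\nu}.$$
Since $v_\lambda^*(v_H)\neq 0$ and $v_\lambda^*$ pairs nontrivially only with $v_{-\lambda}$, we have $v_{-\lambda}\neq 0$, i.e. $0\in\Lambda_\pi$. Setting $m(X):=\max\{\nu(X)\mid \nu\in\Lambda_\pi\}$, the class in $\mathbb{P}(V)$ is the class of $\sum_{\nu\in\Lambda_\pi} e^{t(\nu(X)-m(X))} v_{-\lambda+\nu}$, which admits a well-defined limit as $t\to\infty$, namely $\bigl[\sum_{\nu\in\Lambda_\pi,\ \nu(X)=m(X)} v_{-\lambda+\nu}\bigr]$. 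Since the weight spaces $V_{-\lambda+\nu}$ are linearly independent and each vector in the sum is nonzero, this limit equals $[v_{-\lambda}]$ if and only if the only $\nu\in\Lambda_\pi$ attaining $m(X)$ is $\nu=0$.

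Next I would deduce the equivalence. If $\nu(X)<0$ for every $\nu\in\Lambda_\pi\setminus\{0\}$ then $m(X)=0$ is attained uniquely at $\nu=0$, so the limit is $[v_{-\lambda}]$ and $X\in\af_{Z,\pi}^{--}$. Conversely, if some $\nu_0\in\Lambda_\pi\setminus\{0\}$ satisfies $\nu_0(X)\geq 0$, then either $\nu_0(X)>0$, in which case $m(X)>0$ and the $\nu=0$ term drops out, or $\nu_0(X)=0$, in which case the limit has a nontrivial component in the weight space $V_{-\lambda+\nu_0}\neq V_{-\lambda}$. In either case the limit cannot be $[v_{-\lambda}]$. (Remark \ref{edge remark} also confirms that each such $\nu$ vanishes on $\af_H$, so the inequalities descend to linear functionals on $\af_Z$.)

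Finally, for the "in particular" statement, the equivalence shows that $\af_{Z,\pi}^{--}$ is the relatively open polyhedral cone $\{X\in\af_Z\mid \nu(X)<0\text{ for all }\nu\in\Lambda_\pi\setminus\{0\}\}$. As $\Lambda_\pi$ is finite, its closure is the closed polyhedral cone defined by the corresponding weak inequalities (and the empty cone is trivially polyhedral if $\af_{Z,\pi}^{--}$ is empty); the Minkowski--Weyl theorem then yields that $\af_{Z,\pi}^-$ is finitely generated. The only point requiring care is the step where one passes from convergence in $\mathbb{P}(V)$ to dominance of the $\nu=0$ weight component, and this is handled by the normalization factor $e^{-t m(X)}$ above.
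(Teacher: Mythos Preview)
Your proof is correct and follows the approach the paper has in mind; in fact the paper does not write out a proof at all, simply stating ``Since $v_{-\lambda}\neq 0$ we obtain:'' immediately before the lemma, so you have supplied the standard details (normalize by the dominant exponential, pass to the limit, and use linear independence of the distinct $\af$-weight components) that the authors left implicit.
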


\begin{rmk} Let  $\af^{--}$ be  the negative Weyl-chamber with respect to $\Sigma^+$, i.e.
$$ \af^{--}:= \{ X\in \af \mid (\forall \alpha\in \Sigma^+) \ \alpha(X)<0\}\, .$$
Then by the preceding lemma
\begin{equation}\label{WC} (\af^{--} +\af_H )/ \af_H \subseteq \af_{Z,\pi}^{--}\, .  \end{equation}
There are various instances when the inclusion (\ref{WC}) is strict. For example,  if 
$\pi$ is the trivial representation. A more serious obstruction will be encountered in the 
next section when we discuss wave front spherical spaces.
\end{rmk}

Let us now describe how the $\af_{Z,\pi}^-$ behave under tensor products. 
For notational reasons we prefer to write $\af_{Z,\lambda}^-$ instead of $\af_{Z,\pi}^-$ with 
$-\lambda$ being the lowest weight of $(\pi, V)$. 
Let $\pi, \pi'$ be two irreducible representations as above with lowest weights $-\lambda$ and $-\lambda'$,
and with $H$-fixed vectors $v_H$ and $v_H'$.
Then $\pi\otimes \pi'$ has 
a lowest weight vector $v_{-\lambda} \otimes v_{-\lambda'}'$ and
an $H$-spherical vector $v_H \otimes v_H'$.
Let $(\rho, W)$ be the irreducible sub-representation of $\pi \otimes \pi'$ which is generated by the 
lowest weight vector $v_{-\lambda} \otimes v_{-\lambda'}'$. Write $p: V\otimes V' \to W$ 
for the $G$-equivariant projection. Then $0\neq w_H := p(v_H \otimes v_H')$ is an $H$-fixed vector 
of $W$.

\begin{prop} \label{tensor} With the notation introduced above the following holds:
\begin{equation} \label{TENSOR}   \af_{Z, \lambda+\lambda'}^- = \af_{Z, \lambda}^- \cap \af_{Z,\lambda'}^-\, .\end {equation} 
\end{prop}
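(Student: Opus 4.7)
The plan is to establish the identity (\ref{TENSOR}) by proving its two inclusions separately. I begin by recording the weight expansions
\begin{equation*}
v_H=\sum_{\nu\in\Lambda_\pi}v_{-\lambda+\nu},\qquad v_H'=\sum_{\nu'\in\Lambda_{\pi'}}v_{-\lambda'+\nu'},\qquad w_H=\sum_{\mu\in\Lambda_\rho}w_{-\lambda-\lambda'+\mu},
\end{equation*}
with $w_{-\lambda-\lambda'+\mu}=\sum_{\nu+\nu'=\mu}p(v_{-\lambda+\nu}\otimes v_{-\lambda'+\nu'})$. A first observation is that $0\in\Lambda_\rho$: the $(-\lambda-\lambda')$-weight space of $V\otimes V'$ is one-dimensional and already lies inside $W$, hence $p(v_{-\lambda}\otimes v_{-\lambda'})=w_{-\lambda-\lambda'}\neq 0$.

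For the inclusion $\af_{Z,\lambda}^-\cap\af_{Z,\lambda'}^-\subseteq\af_{Z,\lambda+\lambda'}^-$, I take $X\in\af_{Z,\lambda}^{--}\cap\af_{Z,\lambda'}^{--}$ and run a projective limit argument. By definition $[\pi(\exp tX)v_H]\to[v_{-\lambda}]$ and $[\pi'(\exp tX)v_H']\to[v_{-\lambda'}]$, so the Segre embedding gives $[\pi\otimes\pi'(\exp tX)(v_H\otimes v_H')]\to[v_{-\lambda}\otimes v_{-\lambda'}]$ in $\mathbb{P}(V\otimes V')$. Applying the continuous $G$-equivariant map $p$, and using $p(v_{-\lambda}\otimes v_{-\lambda'})\neq 0$, one obtains $[\rho(\exp tX)w_H]\to[w_{-\lambda-\lambda'}]$, showing $X\in\af_{Z,\lambda+\lambda'}^{--}$; taking closures gives the claim.

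For the reverse inclusion $\af_{Z,\lambda+\lambda'}^-\subseteq\af_{Z,\lambda}^-\cap\af_{Z,\lambda'}^-$, I invoke Lemma \ref{dualcone} to identify each compression cone with the negative polar of the convex cone generated by its weight support, so the statement reduces to the cone containment $\mathrm{cone}(\Lambda_\pi)+\mathrm{cone}(\Lambda_{\pi'})\subseteq\mathrm{cone}(\Lambda_\rho)$, and by symmetry to $\mathrm{cone}(\Lambda_\pi)\subseteq\mathrm{cone}(\Lambda_\rho)$. Arguing by contradiction via Hahn--Banach separation, if some $\nu_0\in\Lambda_\pi$ lay outside $\mathrm{cone}(\Lambda_\rho)$, I would obtain $X\in\af_Z$ with $\mu(X)\leq 0$ for all $\mu\in\Lambda_\rho$ and $\nu_0(X)>0$; a small generic perturbation allows me to further assume $X\in\af_{Z,\lambda+\lambda'}^{--}$ and that $\nu\mapsto\nu(X)$ is injective on both $\Lambda_\pi$ and $\Lambda_{\pi'}$, with unique maximizers $\nu^*,\nu'^*$ satisfying $\nu^*(X)>0$ and $\nu'^*(X)\geq 0$. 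The leading asymptotic term of $\pi\otimes\pi'(\exp tX)(v_H\otimes v_H')$ is then $v_{-\lambda+\nu^*}\otimes v_{-\lambda'+\nu'^*}$, so provided $p(v_{-\lambda+\nu^*}\otimes v_{-\lambda'+\nu'^*})\neq 0$, the projective limit of $[\rho(\exp tX)w_H]$ lands at weight $-\lambda-\lambda'+\nu^*+\nu'^*$ and produces $\nu^*+\nu'^*\in\Lambda_\rho$ with strictly positive $X$-pairing---contradicting $X\in\af_{Z,\lambda+\lambda'}^{--}$.

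The main obstacle is the case in which this leading projection vanishes. I plan to handle it by iteratively peeling off weight components of $v_H\otimes v_H'$ that lie in $\ker p$, taken in decreasing order of their $X$-pairing. Because $w_H=p(v_H\otimes v_H')\neq 0$ some component must survive, and by choosing $\nu_0$ to generate an extremal ray of $\mathrm{cone}(\Lambda_\pi)$ the first surviving component should still carry strictly positive $X$-pairing, giving the required contradiction.
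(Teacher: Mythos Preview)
Your argument for the inclusion $\supseteq$ is fine, but the reverse inclusion contains a genuine gap that your closing ``plan'' does not repair. The difficulty is circular: you construct $X$ precisely so that $\mu(X)<0$ for all nonzero $\mu\in\Lambda_\rho$, and then hope to exhibit some $\mu\in\Lambda_\rho$ with $\mu(X)>0$. Once you peel off the components of $v_H\otimes v_H'$ lying in $\ker p$, what remains projects exactly to $w_H=\sum_{\mu\in\Lambda_\rho}w_{-\lambda-\lambda'+\mu}$, whose components are indexed by $\Lambda_\rho$; the one of largest $X$-pairing is $w_{-\lambda-\lambda'}$ itself, with pairing $0$, not something positive. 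Choosing $\nu_0$ extremal in $\mathrm{cone}(\Lambda_\pi)$ does not help, because the survival of a term under $p$ is governed by $\Lambda_\rho$, not by the geometry of $\Lambda_\pi$. In short, asymptotics of $[\rho(\exp tX)w_H]$ alone cannot see the components of $v_H\otimes v_H'$ that $p$ kills, and those are exactly the ones carrying the positive $X$-pairing you need.

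The paper circumvents this by a matrix-coefficient argument rather than by projecting. Since $W$ is the Cartan component, the highest weight functional $v_\lambda^*\otimes v_{\lambda'}^*$ on $V\otimes V'$ annihilates $\ker p$, so for any $g\in G$ one has $(g\cdot(v_\lambda^*\otimes v_{\lambda'}^*))(a\cdot w_H)=(g\cdot(v_\lambda^*\otimes v_{\lambda'}^*))(a\cdot(v_H\otimes v_H'))$, and the right side is the product $f_\pi(g^{-1}a)\,f_{\pi'}(g^{-1}a)$. Now call $\mu\in\Lambda_\pi\cup\Lambda_{\pi'}$ \emph{indecomposable} if it is not a sum of two nonzero elements of $\Lambda_\pi\cup\Lambda_{\pi'}$. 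For indecomposable $\mu\in\Lambda_\pi$, the only way to produce the exponent $-\lambda-\lambda'+\mu$ on the right is from the pair $(\mu,0)$, so there is no cancellation; choosing $g$ with $(g\cdot v_\lambda^*)(v_{-\lambda+\mu})\neq0$ and $(g\cdot v_{\lambda'}^*)(v_{-\lambda'}')\neq0$ then forces $\mu\in\Lambda_\rho$. Since the indecomposables generate $\mathrm{cone}(\Lambda_\pi\cup\Lambda_{\pi'})$, this yields $\Lambda_\pi\cup\Lambda_{\pi'}\subseteq\mathrm{cone}(\Lambda_\rho)$ and hence the missing inclusion. The point you were missing is a mechanism (the functional, not the projection) that bridges $\Lambda_\pi$ and $\Lambda_\rho$ without losing information to $\ker p$.
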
 

\begin{proof} The inclusion $\supseteq $ is clear from Lemma \ref{dualcone} and
the relation $\Lambda_\rho\subseteq\Lambda_\pi+\Lambda_{\pi'}$.  To obtain the opposite inclusion let us call an element 
$\mu \in \Lambda_\pi \cup \Lambda_{\pi'}$ indecomposable if it cannot be expressed as $\mu = \mu_1 +\mu_2$ with 
non-zero elements $\mu_i \in  \Lambda_\pi \cup \Lambda_{\pi'}$.
Thus the opposite inclusion will follow provided 
that $\mu \in  \Lambda_\rho$ for all indecomposable $\mu$.  
Without loss of generality let $\mu \in \Lambda_\pi$. 

Note that the matrix coefficient
$( g \cdot (v_{\lambda}^*\otimes v_{\lambda'}^*) )( v_{-\lambda +\mu} \otimes v_{-\lambda'}')$
is the product of nonzero matrix coefficients $(g\cdot v_\lambda^*)(v_{-\lambda+\mu})$ and 
$(g\cdot v_{\lambda'})(v_{-\lambda'}')$. Hence we can select
$g\in G$ such that 
\begin{equation} \label{TREFFER} 
( g \cdot (v_{\lambda}^*\otimes v_{\lambda'}^*) )( v_{-\lambda +\mu} \otimes v_{-\lambda'}')
\neq 0\, .\end{equation}
We consider the following algebraic function $F$ on $A$:
$$ F(a) =   
(g \cdot (v_{\lambda}^*\otimes v_{\lambda'}^*) ) (a\cdot w_H) = (g \cdot (v_{\lambda}^*\otimes v_{\lambda'}^*) )( a\cdot (v_H \otimes v_H'))\,.$$
Then $F$ is a linear combination of characters on $A$. As $\mu$ was indecomposable, 
(\ref{TREFFER}) implies that the coefficient of $ a^{-\lambda- \lambda' +\mu}$ in the expansion 
of $F$ is non-zero. Hence $\mu\in\Lambda_\rho$ and the proof is
complete. 
\end{proof} 

In order to move on we need:

\begin{lemma}\label{fin gen} 
The multiplicative monoid ${\mathcal P}_{+, \1}$ 
is finitely generated. 
\end{lemma}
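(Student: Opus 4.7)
The plan is to identify $\mathcal{P}_{+,\1}$, modulo positive scalars, with a submonoid of the real character group of $A/A_H$, and then to deduce finite generation from the classical theorem on weight monoids of quasi-affine spherical varieties.

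After replacing $H$ by its identity component, which affects neither the finite generation of $\mathcal{P}_{+,\1}$ nor the statement (the component group $H/H_0$ is finite), I may assume $H_\C=H_{\C,0}$ is connected. The complexification $Z_\C=G_\C/H_\C$ is then a normal, quasi-affine, complex spherical $G_\C$-variety: a Borel $B_\C\subseteq P_\C$ has open orbit on $Z_\C$ because $P_\C\cdot z_0$ is open and $P_\C/B_\C$ is complete. Any $f\in\mathcal{P}_{+,\1}$ descends to an element of $\C[Z_\C]$ and is, by (\ref{f product}) together with left $M$-invariance, a left $P_\C$-eigenfunction with real-valued $A$-character $\chi_f$ (trivial on $M$), non-vanishing on $P_\C\cdot z_0$.

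Next I would verify that $f\mapsto\chi_f$ induces an injective monoid homomorphism
$$\chi\colon\mathcal{P}_{+,\1}/\R^\times\hookrightarrow\mathrm{Hom}(A/A_H,\R^\times).$$
If $f_1,f_2\in\mathcal{P}_{+,\1}$ share a character, then $f_1/f_2$ is a $P$-invariant regular function on the open orbit $P\cdot z_0\subseteq Z$ (where $f_2$ is non-vanishing), hence constant by real sphericity, so $f_1\in\R^\times f_2$. Multiplicativity $\chi_{f_1f_2}=\chi_{f_1}+\chi_{f_2}$ is immediate, so finite generation of $\mathcal{P}_{+,\1}$ is equivalent to finite generation of the image of $\chi$.

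For this I would invoke the classical theory of complex spherical varieties: by Vinberg--Kimelfeld (extended to the quasi-affine case by Popov and Knop), the weight monoid $\Lambda^+(Z_\C)$ of $B_\C$-highest weights occurring in the multiplicity-free module $\C[Z_\C]$ is finitely generated, and is saturated because $Z_\C$ is normal; it is therefore the set of lattice points of a rational polyhedral cone $C(Z_\C)$. The image of $\chi$ lies in the intersection of $\Lambda^+(Z_\C)$ with the rational subspace cut out by triviality on $[L_\C,L_\C]\cap B_\C$ (so these $B_\C$-weights descend to $P_\C$-characters) and reality on $A$; by saturation this intersection is itself the lattice points of a rational polyhedral cone, and Gordan's lemma yields finite generation. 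The main obstacle I anticipate is justifying finite generation of $\C[Z_\C]^{U_\C}$ in the quasi-affine setting, which is most cleanly handled via the standard reduction (\ref{reduction quasi-affine}) to an affine spherical cover followed by Hadziev's theorem on $U$-invariants; the non-vanishing condition on $P_\C H_\C$ is not an obstacle, since the zero locus of any nonzero $P_\C$-semi-invariant is $P_\C$-stable and hence disjoint from the open $P_\C$-orbit.
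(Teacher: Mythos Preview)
Your approach is workable in spirit but has a genuine gap as written. You argue that the image of $\chi$ \emph{lies in} the intersection of $\Lambda^+(Z_\C)$ with a certain rational subspace, observe that this intersection is finitely generated by saturation and Gordan's lemma, and then stop; but containment in a finitely generated monoid does not imply finite generation (for instance $\{0\}\cup\{(a,b)\in\Z^2:a,b>0\}$ sits inside $\N^2$ yet is not finitely generated). To repair this you would have to prove that the image \emph{equals} the intersection, and for that your subspace condition ``triviality on $[L_\C,L_\C]\cap B_\C$'' is too weak: it only forces the $B_\C$-eigenfunction to be $P_\C$-semi-invariant, not $M$-invariant as Definition~\ref{defi P++}(\ref{M invariant}) requires. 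The correct linear condition is triviality on the full maximal torus of $M_\C$ inside $T_\C$; once one checks that every weight satisfying this actually arises from a real-valued eigenfunction in $\mathcal P_{+,\1}$, equality holds and Gordan's lemma applies. Your ``reality on $A$'' condition, incidentally, is vacuous, since $A$ is split. So the obstacle you anticipate (finite generation of $\C[Z_\C]^{U_\C}$) is not the real one.

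The paper bypasses all of this machinery. It identifies $\mathcal P_{+,\1}$ (via (\ref{P1+})) with the homogeneous elements of the algebra $\R[G/H]^{MN}$, rewrites this as $\R[G/MN\times Z]^G$, and invokes Hilbert's theorem on invariants of a reductive group acting on a quasi-affine variety to conclude that this algebra is finitely generated; the finite generation of the grading monoid is then immediate. This is a three-line argument with no complex spherical theory, saturation, or Gordan's lemma. Your route, once patched, does have the merit of identifying the monoid explicitly as the lattice points in a rational polyhedral cone, which gives extra structural information, but for the bare finite generation statement it is a detour.
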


\begin{proof} Recall (\ref{P1+}) and observe that 
$$ \R[G/H]^{MN} \simeq \R [G/MN\times Z]^G\,.$$ 
Further $G/MN$ is quasi-affine and hence so is $G/MN\times Z$. As $G$ is algebraic real reductive 
we conclude from Hilbert's theorem that  $\R[G/MN\times Z]^G$ is finitely generated. 
\end{proof}

Let 
$f_1, \ldots, f_n$ be a set of generators of ${\mathcal P}_{+,\1}$
and let $f_i=f_{\pi_i}$. Then
Proposition \ref{tensor} implies that $\af_Z^- = \cap_i \af_{Z,\pi_i}^-$. 
Moreover, we have:

\begin{lemma} \label{lemma INDEPENDENCE}  Let $ f=f_\pi \in {\mathcal P}_{++,\1}$. Then 
\begin{equation} \label{INDEPENDENCE} \af_Z^- = \af_{Z,\pi}^- \, .\end{equation}
\end{lemma}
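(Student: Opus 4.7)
The plan is to prove the nontrivial inclusion $\af_{Z,\pi}^-\subseteq \af_Z^-$; the reverse is immediate from the definition of $\af_Z^-$ as an intersection. For this it suffices to show, for every irreducible $H$-semi-spherical representation $\pi'$ with matrix coefficient $f'=f_{\pi'}\in\mathcal{P}_+$ of highest weight $\lambda'$, that $\af_{Z,\pi}^-\subseteq \af_{Z,\pi'}^-$.

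The core step is a divisibility claim: for $N$ sufficiently large, $f'$ divides $f^N$ in $\C[G_\C]$. Indeed, since $f\in\mathcal{P}_{++}$ vanishes exactly on the divisor $\mathcal{D}=G_\C\setminus P_\C H_\C$ (Lemma \ref{lemma P++}), while $f'\in\mathcal{P}_+$ vanishes only on a subvariety of $\mathcal{D}$, the order of vanishing of $f$ along each irreducible component of $\mathcal{D}$ is strictly positive. For $N$ large, the vanishing order of $f^N$ along each such component dominates that of $f'$; since $G_\C$ is smooth, the rational function $g:=f^N/f'$ is regular in codimension one, hence regular on all of $G_\C$. The quotient $g$ inherits from $f$ and $f'$ the left $P$-semi-invariance (with new character of weight $N\lambda-\lambda'$), the right $H_{\C,0}$-semi-invariance, the $M$-invariance, the reality on $G$, and the non-vanishing on $P_\C H_\C$. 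Hence $g\in\mathcal{P}_+$, and it is the matrix coefficient of an irreducible $H$-semi-spherical representation $\pi_g$ of highest weight $N\lambda-\lambda'$.

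Proposition \ref{tensor} then finishes the argument. Iterating it with $\pi=\pi'$ yields $\af_{Z,N\lambda}^-=\af_{Z,\lambda}^-$ for every $N\geq 1$ (each step being the Cartan component of $\pi^{\otimes k}\otimes\pi$). Applying it instead to the decomposition $N\lambda=(N\lambda-\lambda')+\lambda'$, valid because $\pi_g$ realizes the weight $N\lambda-\lambda'$, gives $\af_{Z,N\lambda}^-=\af_{Z,N\lambda-\lambda'}^-\cap\af_{Z,\lambda'}^-$. Combining the two identities produces $\af_{Z,\lambda}^-\subseteq\af_{Z,\lambda'}^-$, as required.

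The main obstacle will be justifying the divisibility $f'\mid f^N$ globally in $\C[G_\C]$. The local vanishing-order inequalities at the generic points of the components of $\mathcal{D}$ are automatic once $N$ is large, but upgrading them to a true division requires normality of $G_\C$ in order to extend $g$ across the codimension-two locus via the Hartogs-type principle. If one wished to avoid this, an alternative is to invoke the finiteness of the Picard group of $G_\C$, already used in Lemma \ref{lemma P++}, to arrange that the line bundles associated with the divisors of $f^N$ and $f'$ coincide after passing to an appropriate further power.
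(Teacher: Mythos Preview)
Your proof is correct and follows essentially the same route as the paper: establish that $f'$ divides $f^N$ for $N\gg 0$, then use Proposition~\ref{tensor} to conclude $\af_{Z,\pi}^-\subseteq\af_{Z,\pi'}^-$. The paper compresses the divisibility step into one line by citing the Krull property of $\C[G_\C]$ (as in \cite{BLV}, Lemme~2.1) and treats all the generators $f_1,\dots,f_n$ of $\mathcal P_{+,\1}$ at once via $f_1\cdots f_n\mid f^N$, whereas you unpack the same argument componentwise using smoothness/normality of $G_\C$; these are equivalent formulations. One small remark: since the lemma concerns $\mathcal P_{++,\1}$ and the cone $\af_Z^-$ is, at this point of the paper, the intersection over \emph{$H$-spherical} (trivial character) $\pi'$, you may as well take $f'\in\mathcal P_{+,\1}$ rather than the more general $\mathcal P_+$; then $g=f^N/f'$ automatically has trivial right character and Proposition~\ref{tensor} applies verbatim.
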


\begin{proof} As in \cite{BLV}, Lemme 2.1, we use that 
$\C[G]$ is a Krull domain to conclude that the product $f_1\cdots f_n $ divides $f^N$ for some 
$N\in \N$. The result then follows from Proposition \ref{tensor}. 
\end{proof}

So far we have assumed that $Z$ is quasi-affine. The general case is reduced to 
this case as follows. 

One needs to allow the vector $v_H=v_{H,\chi} $ to transform under the
character $\chi$ of $H$.  
Let
$$f_{\pi,\chi}(g) := v_\lambda^* (\pi(g) v_{H,\chi}).$$
As before we expand
$$ v_{H,\chi} = \sum_{\mu \in \Lambda_{\pi, \chi}} v_{-\lambda +\mu}$$
and observe that each $\mu\in\Lambda_{\pi, \chi} $ is trivial on $\af_H$, 
so that $\Lambda_{\pi, \chi} \subset \af_Z^*$.
As in (\ref{open comp cone}) one defines an open cone $\af_{Z, \pi, \chi}^{--}$,
and taking closures one defines 
$$ \af_{Z,\chi}^-= \bigcap_{\pi} \af_{Z,\pi,\chi}^-,\qquad            
\af_Z^- = \bigcap_{\chi } \af_{Z,\chi}^-= \bigcap_{\pi,\chi } \af_{Z,\pi,\chi}^-$$ 
as in Definition \ref{defi comp cone}.

For each $\chi$ we define the quasi-affine space $Z_{1,\chi}$ as in (\ref{reduction quasi-affine}),
then $\af_{Z_{1,\chi}}^-=\af_{Z,\chi}^-\times\R$.
From our discussion of the quasi-affine situation we deduce that the 
first intersection above is finite and that 
\begin{equation}\label{conechi}  \af_{Z,\chi}^-=\af_{Z,\pi,\chi}^- \quad \hbox{if} \quad 
f_{\pi,\chi} \in {\mathcal P}_{++}\, .\end{equation}
It remains to be seen that $\af_{Z,\chi}$ is independent of $\chi$. We employ the notation 
before Proposition \ref{tensor} and note that $f_{\pi,\chi} \cdot f_{\pi',\chi'}= f_{\rho, \chi\chi'}$.
As in Proposition \ref{tensor} we deduce that $\af_{Z,\pi,\chi}^-\cap \af_{Z,\pi',\chi'}^-= 
\af_{Z,\rho, \chi\chi'}^-$. In particular if both $f_{\pi,\chi}$ and  $f_{\pi',\chi'}$ are in $
{\mathcal P}_{++}$, then we obtain  with (\ref{conechi}) that 
$\af_{Z,\chi}^-\cap \af_{Z,\chi'}^- =\af_{Z,\chi\chi'}$. Proceeding as in the quasiaffine case 
we get
\begin{equation} \label{INDEPENDENCE'} \af_Z^- = \af_{Z,\pi,\chi}^- \quad \hbox{if} \quad 
f_{\pi,\chi} \in {\mathcal P}_{++}\, .\end{equation}

\subsection{The edge of the compression cone}

Let $C$ be a closed convex cone in a finite dimensional real vector space $V$. The edge 
of $C$ is the linear  subspace $E(C):= C\cap -C $ of $V$. One calls $C$ {\it pointed} or {\it sharp} 
provided that $E(C)=\{ 0\}$.

\par We deduce from Lemma  \ref{lem hf} the direct sum decomposition 
for the Lie algebra of $N_G(\hf)$:  

\begin{equation} \label{AH} \nf_\gf(\hf) = \hf \oplus  \tilde \af_h \oplus \tilde \mf_h\end{equation} 
with subspaces $\tilde\af_h \subset\af$ and $\tilde \mf_h \subset \mf$.

\begin{lemma}\label{edge lemma} 
{\rm (Edge of the Compression cone)} The following assertions are 
equivalent: 
\begin{enumerate}
\item $\tilde \af_h=\{0\}$. 
\item $N_G(\hf)/H$ is compact. 
\item $\af_Z^-$ is sharp. 
\end{enumerate}
\end{lemma}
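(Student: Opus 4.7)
The plan is to show $E(\af_Z^-) = \tilde\af_h$, which immediately yields all three equivalences. Indeed (1) $\Leftrightarrow$ (2) follows from (\ref{AH}) alone: the Lie algebra of $N_G(\hf)^0/H^0$ is $\tilde\af_h \oplus \tilde\mf_h$ with $\tilde\mf_h \subseteq \df_c$ compact, and the algebraicity of $H \subseteq G$ ensures that $N_G(\hf)/H$ has only finitely many connected components, so compactness of the identity component is equivalent to compactness of the whole quotient.

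For the inclusion $\tilde\af_h \subseteq E(\af_Z^-)$, I take $X \in \tilde\af_h$ and any irreducible $H$-semi-spherical $(\pi, V)$ with vector $v_{H,\chi}$ of character $\chi$. Because $\exp(tX) \in N_G(\hf)^0$ is connected to $\1$, it fixes the discrete algebraic character $\chi$, and by uniqueness (up to scalar) of $(H,\chi)$-semi-spherical vectors I get $\exp(tX)\, v_{H,\chi} \in \R v_{H,\chi}$. Expanding $v_{H,\chi} = \sum_{\nu \in \Lambda_{\pi,\chi}} v_{-\lambda+\nu}$ in $\af$-weights forces $\nu(X) = 0$ for every $\nu \in \Lambda_{\pi,\chi}$, placing $X$ in $E(\af_{Z,\pi,\chi}^-)$ for all $(\pi,\chi)$ and hence in $E(\af_Z^-)$.

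For the reverse inclusion I would reduce to the self-normalizing stabilizer $\hat H := N_G(\hf)$ and $\hat Z := G/\hat H$; that $\hat H$ is self-normalizing is already used in the opening of the proof of Lemma \ref{lemma1}. Then Theorem \ref{J-cpct} gives $\hat J = M_{\hat J}\, \hat H$ with $\mf_{\hat J} \subseteq \df_c$, so $\hat\jf \subseteq \hat\hf + \df_c$. The matrix-coefficient argument of the previous paragraph, now combined with Lemma \ref{H_f=J}(b) and (\ref{INDEPENDENCE'}) applied to $\hat Z$, shows $E(\af_{\hat Z}^-) \subseteq \hat\jf \cap \af_{\hat Z}$. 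Using the direct sum $\gf = \hat\hf \oplus \af_{\hat Z} \oplus \mf_{\hat Z} \oplus \uf$ together with the refinement $\lf = \lf_n \oplus \lf_c \oplus \zf(\lf)_{np} \oplus \zf(\lf)_{cp}$, the fact that $\df_c \subseteq \lf_c + \zf(\lf)_{cp}$ has trivial $\zf(\lf)_{np}$-component while $\af_{\hat Z} \subseteq \zf(\lf)_{np}$ forces $\hat\jf \cap \af_{\hat Z} = 0$, so $\af_{\hat Z}^-$ is sharp.

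To close the loop I compare the compression cones of $Z$ and $\hat Z$. Every $\hat H$-semi-spherical $(\pi,\chi)$ is a fortiori $H$-semi-spherical, and the argument of the second paragraph, applied with any $X \in \tilde\af_h \subseteq \hat\hf$, shows that $\Lambda_{\pi,\chi}$ annihilates $\tilde\af_h$. Hence $\Lambda_{\pi,\chi}$ descends to $\af_{\hat Z} = \af_Z/\tilde\af_h$, and $\af_{Z,\pi,\chi}^-$ is the preimage of $\af_{\hat Z,\pi,\chi}^-$ under the quotient map $q\colon \af_Z \to \af_{\hat Z}$. Intersecting only over this subclass of representations gives $\af_Z^- \subseteq q^{-1}(\af_{\hat Z}^-)$; since $\af_{\hat Z}^-$ is sharp, the edge of $q^{-1}(\af_{\hat Z}^-)$ equals the kernel $\tilde\af_h$, and because the edge of a subcone is contained in the edge of the supercone, $E(\af_Z^-) \subseteq \tilde\af_h$. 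The main technical obstacle I anticipate is the computation $\hat\jf \cap \af_{\hat Z} = 0$ in the third paragraph, where one must carefully track several direct sum decompositions of $\gf$ and exploit that $\mf_{\hat J}$ is compact while $\af_{\hat Z}$ consists of hyperbolic elements.
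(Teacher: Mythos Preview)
Your proof is correct and rests on the same three pillars as the paper's: the decomposition~(\ref{AH}) for (1)$\Leftrightarrow$(2), the observation via Lemma~\ref{dualcone} and (\ref{INDEPENDENCE'}) that the edge fixes the line $\R v_{H,\chi}$ and hence lies in $\jf$, and Theorem~\ref{J-cpct} applied to the self-normalizing subgroup $N_G(\hf)$. Where you differ is in organization. The paper argues directly for $Z$: once $E(\af_Z^-)\subset\jf$ is established, it invokes that $\jf/\nf_\gf(\hf)$ has compact Lie algebra (Theorem~\ref{J-cpct}), and since $E(\af_Z^-)\subset\af_Z\subset\zf(\lf)_{np}$ contains no compact directions, this forces $E(\af_Z^-)\subset\nf_\gf(\hf)\cap\af_Z$, which under (1) is zero. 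Your route through $\hat Z=G/N_G(\hf)$ in paragraphs~3--4 --- first showing $\af_{\hat Z}^-$ is sharp and then pulling back along $q\colon\af_Z\to\af_{\hat Z}$ --- reaches the same conclusion but adds an unnecessary layer: the computation $\hat\jf\cap\af_{\hat Z}=0$ you perform is exactly the computation $\jf\cap\af_Z\subset\tilde\af_h$ the paper does, just after quotienting by $\tilde\af_h$.

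One small imprecision in paragraph~2: $N_G(\hf)^0$ normalizes $H_0$, not $H$ in general, so the conjugation action is on algebraic characters of $H_0$ (which are indeed discrete), and the uniqueness you invoke is that of the $(H_0,\chi|_{H_0})$-semi-spherical vector. With that adjustment the argument stands. The paper, incidentally, is terser at this same point (``by the definition of the compression cone''), so your more explicit justification is a genuine addition.
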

\begin{proof} The equivalence of (1) and (2) 
follows from (\ref{AH}). 
\par By the definition of the compression cone we have 
$$ \tilde \af_h +\af_{Z,\pi,  \chi }^{--} = \af_{Z, \pi, \chi}^{--}$$ 
for each $\pi$ and $\chi$,
and hence $\tilde \af_h \subset \af_Z^-$.  
Thus if $\tilde \af_h\neq\{0\}$, then 
$\af_Z^-$ is not sharp, i.e.~(3) implies (1). 
Finally, it follows from Lemma \ref{dualcone} that
the edge $E(\af_Z^-)$ fixes the line $\R v_{H,\chi}$. 
Hence $E(\af_Z^-)\subset \jf$ (see Lemma \ref{J lemma}),
and then  $E(\af_Z^-)\subset \nf_\gf(\hf)$ since
$\jf/\nf_\gf(\hf)$ has compact Lie algebra (see Theorem \ref{J-cpct}). Thus 
(1) implies that $E(\af_Z^-) \subset \hf$ and (3) holds. 
\end{proof}

\subsection{Compression in the Grassmannian} 

We define the {\it limiting subalgebra}  
$$ \hf_{\rm lim} = \oline \uf + (\lf\cap\hf)\, .$$
{}From (\ref{deco2}) we obtain that $\hf_{\rm lim}$ is a real spherical 
subalgebra of $\gf$ with 
$$d:=\dim \hf =\dim \hf_{\rm lim}.$$
Let us denote by $\Gr_d(\gf)$ the Grassmannian of $d$-dimensional 
subspaces of the real vector space $\gf$.  

\par Denote by $\af_Z^{--}$ the interior of $\af_Z^-$.  The following 
result is motivated by the work of Brion, see  \cite{Brion}, Section 2.

\begin{lemma} \label{Grass} Let $X\in \af_Z$. Then the 
following statements are equivalent: 
\begin{enumerate}
\item $X\in \af_Z^{--}$. 
\item $\lim_ {t\to \infty} e^{t\ad X} \hf = 
\hf_{\rm lim}$ in $\Gr_d(\gf)$.
\end{enumerate}
\end{lemma}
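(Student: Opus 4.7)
The plan is to parametrize $\hf$ as a graph transverse to $\af_Z\oplus\mf_Z\oplus\uf$, then compute the adjoint action of $\exp(tX)$ explicitly in root space coordinates, and finally match the resulting convergence condition with the inequalities cutting out the compression cone.

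Since $PH=QH$ is open, $\gf=\hf+\qf$, and a dimension count combined with the obvious inclusion $\hf\cap\lf\subseteq\hf\cap\qf$ shows $\hf\cap\qf=\hf\cap\lf$. Combining this with (\ref{deco2}) produces a unique linear map $\phi:\oline\uf\to\af_Z\oplus\mf_Z\oplus\uf$ such that
\[
\hf=\{\,\oline Y+\phi(\oline Y)\mid\oline Y\in\oline\uf\,\}\oplus(\hf\cap\lf),
\]
the subspace $\hf_{\rm lim}$ being precisely the graph with $\phi\equiv 0$. Since $\af_Z\subset\zf(\lf)$, the operator $\ad X$ annihilates $\lf$ and acts on $\gf_\gamma$ by $\bar\gamma(X):=\gamma|_{\af_Z}(X)$. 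Writing $\oline Y=\sum_\alpha\oline Y_{-\alpha}$ and $\phi(\oline Y_{-\alpha})=\phi_{\lf_0}(\oline Y_{-\alpha})+\sum_\beta\phi_\beta(\oline Y_{-\alpha})$ with $\phi_{\lf_0}\in\af_Z\oplus\mf_Z$ and $\phi_\beta\in\gf_\beta$, a direct computation, after rescaling by $e^{t\bar\alpha(X)}$, yields the expression
\[
\oline Y_{-\alpha}+e^{t\bar\alpha(X)}\phi_{\lf_0}(\oline Y_{-\alpha})+\sum_{\beta\in\Sigma_\uf}e^{t(\bar\alpha+\bar\beta)(X)}\phi_\beta(\oline Y_{-\alpha}).
\]
Hence $e^{t\ad X}\hf\to\hf_{\rm lim}$ in $\Gr_d(\gf)$ if and only if $\nu(X)<0$ for every $\nu$ in the finite set $\Lambda_{\rm geom}\subset\af_Z^*$ consisting of those $\bar\alpha$ for which $\phi_{\lf_0}(\oline Y_{-\alpha})\neq 0$ for some $\oline Y_{-\alpha}$, together with those $\bar\alpha+\bar\beta$ for which $\phi_\beta(\oline Y_{-\alpha})\neq 0$ for some $\oline Y_{-\alpha}$.

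It remains to show this cone coincides with $\af_Z^{--}$. I would choose $f=f_{\pi,\chi}\in\mathcal{P}_{++}$ so that $\af_Z^-=\af_{Z,\pi,\chi}^-$ (Lemma \ref{INDEPENDENCE'}), expand the $H$-semi-spherical vector as $v_H=\sum_{\mu\in\Lambda_{\pi,\chi}}v_{-\lambda+\mu}$, and exploit the defining identity $d\pi(Y)v_H=d\chi(Y)v_H$ for $Y=\oline Y_{-\alpha}+\phi(\oline Y_{-\alpha})\in\hf$. Reading this equation weight-by-weight under $\af_Z$ produces linear identities relating the $\phi$-components to the $v_{-\lambda+\mu}$; these should yield both $\Lambda_{\rm geom}\subseteq\N[\Lambda_{\pi,\chi}\setminus\{0\}]$ and $\Lambda_{\pi,\chi}\setminus\{0\}\subseteq\N[\Lambda_{\rm geom}]$, hence the two open cones coincide.

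The principal obstacle is this final weight-chasing step; in particular, showing that the full compression cone is detected by $\Lambda_{\rm geom}$ depends crucially on $\pi$ realizing all of $\af_Z^-$, i.e.\ on $f\in\mathcal{P}_{++}$. A robust fallback, should the direct weight identification prove awkward, is to combine one inclusion from the computation with an openness argument: the set where the Grassmannian limit equals $\hf_{\rm lim}$ is open in $\af_Z$, is contained in $\af_Z^-$ by upper semicontinuity of stabilizer Lie algebras as $\exp(tX)\cdot z_0$ approaches the closed orbit $Y\subset\hat Z_f$ of the simple compactification from Section \ref{S:exist simple c}, and contains $\af_Z^{--}$ by the direct computation above---forcing all three open cones to coincide.
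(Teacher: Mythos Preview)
Your graph parametrization of $\hf$ and the computation of $e^{t\ad X}\hf$ are correct; in fact this is exactly what the paper carries out \emph{after} Lemma~\ref{Grass}, in Section~\ref{section cone}.4, where your set $\Lambda_{\rm geom}$ reappears as the generating set of the monoid $\mathcal{M}$. But in the paper's logic the identity $\af_Z^-=\{X:\nu(X)\le 0\text{ for all }\nu\in\mathcal{M}\}$ (Lemma~\ref{explicitcone}) is a \emph{consequence} of Lemma~\ref{Grass}, not an ingredient. You have reversed the order, so you must prove independently that the $\Lambda_{\rm geom}$-cone equals $\af_Z^{--}$, and neither of your two proposals does this. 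The weight recursion from $d\pi(Y)v_H=d\chi(Y)v_H$ produces linear relations among the $v_{-\lambda+\mu}$, but it does not force the cone generated by $\Lambda_{\pi,\chi}\setminus\{0\}$ to coincide with the one generated by $\Lambda_{\rm geom}$; different $\alpha\in\Sigma_\uf$ can have the same restriction $\bar\alpha$, and the recursion mixes weight spaces in a way that does not readily yield the needed inclusions. In the fallback, ``contains $\af_Z^{--}$ by the direct computation above'' is precisely the unproved implication $(1)\Rightarrow(2)$: your computation shows the Grassmannian cone equals the $\Lambda_{\rm geom}$-cone, not that $\af_Z^{--}$ sits inside it (it contains only the image of $\af^{--}$, which is strictly smaller unless $Z$ is wavefront). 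And the semicontinuity step for the other inclusion is incomplete: a subsequential limit of $\exp(tX)\cdot z_0$ in $\hat Z_f$ with stabilizer containing $\hf_{\rm lim}\supset\oline\uf$ need not lie in the closed orbit $Y$, since $\oline U$-fixed lines in an irreducible $V_f$ are not reduced to the lowest weight line in general.

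The paper's argument is genuinely different and sidesteps the matching problem. It lifts $\hf$ and $\hf_{\rm lim}$ to $V=\bigwedge^d\gf$, projects to the irreducible submodule $W$ generated by the lowest weight vector $v_0$ (of weight $-2\rho_\uf$), and then $M$-averages the square of the matrix coefficient $F(g)=w_0^*(g\cdot w_H)$ to obtain a $P$-eigenfunction $f$. The key observation is that $4\rho_\uf(\alpha^\vee)>0$ for every $\alpha\in\Sigma_\uf$, which forces $f\in\mathcal{P}_{++}$; then (\ref{INDEPENDENCE'}) gives $\af_Z^{--}=\af_{Z,f}^{--}$ for \emph{this specific} representation $U$. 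Finally an indecomposable-weight argument, parallel to Proposition~\ref{tensor}, shows that convergence of $[\exp(tX)\cdot u_H]$ to $[u_0]$ in $\mathbb{P}(U)$ is equivalent to convergence of $[\exp(tX)\cdot w_H]$ to $[w_0]$ in $\mathbb{P}(W)$, hence to the Grassmannian limit. The point is that the paper manufactures a function in $\mathcal{P}_{++}$ directly out of the Pl\"ucker data, so that (\ref{INDEPENDENCE'}) applies immediately, instead of trying to compare $\Lambda_{\rm geom}$ with the weight set of an arbitrary $f\in\mathcal{P}_{++}$.
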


\begin{proof} Let $X\in \af_Z$. 
Set $V:=\bigwedge^d \gf$ and endow $V$ with an inner product $\la\cdot, \cdot\ra$
which is $\theta$-covariant, that is: $\la g\cdot v, w\ra = \la v, \theta(g)^{-1}\cdot w\ra$
for all $v, w\in V$ and $g\in G$. 

\par Let $v_0, v_H\in V$ be lifts of $\hf_{\rm lim}, \hf\in \Gr_d(\gf)$
to $V$. Note that $v_0$ is an eigenvector for $A\oline N$ with $\af$-weight 
$-2\rho_\uf:= -\sum_{\alpha\in \Sigma_\uf} \alpha$. 
Denote by $W$ the $G$-submodule of $V$ which is generated by 
$v_0$ and write $p: V\to W$ 
for the orthogonal projection. Set $w_H:=p(v_H)$ and $w_0:=p(v_0)=v_0$. 
Then we need to show that: 
\begin{equation} \label{Grass-compr} X \in \af_Z^{--} \iff 
\lim_{t\to \infty} [\exp(tX)\cdot w_H]=[ w_0]
\, \end{equation}
in $\mathbb{P}(V)$.

\par Let $W^*$ be the dual representation of $W$ and $w_0^*\in W^*$ be a functional with 
$an \cdot w_0^*= a^{2\rho_\uf} w_0^*$ for all $an \in AN$ and $w_0^*(w_0)\neq 0$. 
Attached to $w_0^*$ and $w_H$ is the $AN\times H$-semi-invariant function 
$$ F(g):= w_0^*(g\cdot w_H)\, .$$
In general this is not a left $P$-eigenfunction, but we can overcome this difficulty
by passing to the $M$-average of the square as in the proof of Lemma \ref{J-inv}: 
\begin{equation} \label{constrf} f(g):= \int_M F(m\cdot g)^2 \ dm \qquad (g\in G)\, .\end{equation}
Then $f$ is a matrix coefficient of a finite dimensional irreducible representation 
$U$ with $H$-fixed vector $u_H$ and a $P$-eigenvector $u_0^*\in U^*$ with $man\cdot u_0^*= 
a^{4\rho_\uf} u_0^*$: 
$$ f(g) = u_0^* (g\cdot u_H)\qquad (g\in G)\, .$$
As $4\rho_\uf(\alpha^\vee)>0$ for all $\alpha\in \Sigma_\uf$ we conclude 
that 
$f \in {\mathcal P}_{++}$. 
From (\ref{INDEPENDENCE}) we thus conclude that 
$X\in \af_Z^{--}$ if and only if 
\begin{equation} \label{Grass-compr2}
\lim_{t \to \infty} [\exp(tX) \cdot u_H ]= [u_0] \end{equation}
holds true. 
Hence it remains to be seen that (\ref{Grass-compr2}) is equivalent to 
the right hand side of (\ref{Grass-compr}). 

For that let $w_H= \sum_{\nu\in \Lambda_F}  v_{-2\rho_{\uf}+\nu}$ and 
$u_H=\sum_{\nu\in \Lambda_f} u_{-4\rho_{\uf} +\nu}$ 
be the respective decomposition into non-trivial
$\af$-weight vectors, then $\Lambda_f\subseteq \Lambda_F+\Lambda_F$,
and hence (\ref{Grass-compr2}) is implied by
the right hand side of (\ref{Grass-compr}). For the converse implication,
we observe that if $\nu\in\Lambda_F$ is indecomposable 
(that is, not the sum of two non-zero elements from $\Lambda_F$),
then $2\nu\in\Lambda_f$. This follows from the construction (\ref{constrf}),
as in the proof of Proposition \ref{tensor}.
The implication is an easy consequence of this observation. 
\end{proof}

\subsection{An explicit description of the compression cone}

This part is an adaption of Section 2.3 from \cite{Brion}. 

Write $\df_H^\perp$ for the orthocomplement of $\df_H:= \df \cap \hf$ in $\df$. 
The local structure theorem implies that 
$$ \hf = \lf \cap \hf \oplus {\mathcal G}(T)$$
where ${\mathcal G}(T)$ is the graph of a linear map 
$$ T: \oline{\uf}\to \uf + \df_H^\perp\, .$$ 

In particular, for all $\alpha\in \Sigma_\uf$ and root vectors $X_{-\alpha}\in \gf^{-\alpha}$
we find $D_\alpha\in \df_H^\perp$  and $X_\beta \in \gf^\beta$, $\beta \in \Sigma_\uf$ 
such that 

$$ Y_\alpha:=X_{-\alpha} + D_\alpha + \sum_{\beta} X_\beta\in \hf\, .$$
As $[\af_H, Y_\alpha]\in {\mathcal G}(T)$ we conclude that:
$\alpha|_{\af_H}=0$ if $D_\alpha\neq 0$, and $(\alpha+\beta)|_{\af_H}=0$ 
if $X_\beta\neq 0$. 
We let ${\mathcal M}\subset \af_Z^*\simeq \af_H^\perp \subset \af^*$ be the monoid , i.e. additive semi-group, 
 generated by 

\begin{itemize} 
\item $\alpha$ if there exists $X_{-\alpha}$ with $D_\alpha\neq 0$.  
\item $\alpha+\beta$ if there exists $X_{-\alpha}$ with $X_\beta\neq 0$.  
\end{itemize}

We combine Lemma \ref{dualcone} with Lemma \ref{Grass} and obtain
that: 

\begin{lemma}\label{explicitcone} $\af_Z^-=\{ X\in \af_Z\mid (\forall \alpha \in {\mathcal M})
\ \alpha(X)\leq 0\}$. 
\end{lemma}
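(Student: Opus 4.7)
The plan is to derive the stated equality from Lemma \ref{Grass}, which identifies $\af_Z^{--}$ with the set of $X\in\af_Z$ such that $e^{t\ad X}\hf\to\hf_{\rm lim}=\oline{\uf}+(\lf\cap\hf)$ in $\Gr_d(\gf)$, and then take closures. The key observation is that $\af_Z$ lies in the center of $\lf$, so the one-parameter group $e^{t\ad X}$ acts trivially on $\lf\cap\hf$. Consequently, the Grassmannian convergence reduces to understanding how $e^{t\ad X}$ transports the graph part $\mathcal{G}(T)$ onto $\oline{\uf}$.

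Concretely, I would fix a basis of $\mathcal{G}(T)$ consisting of the vectors $Y_\alpha=X_{-\alpha}+D_\alpha+\sum_\beta X_\beta\in\hf$ indexed by a basis $\{X_{-\alpha}\}$ of $\oline{\uf}$, as in the preamble to the lemma. Since $\af_Z\subset\zf(\lf)$, the element $D_\alpha$ is $\af_Z$-fixed, while $X_{-\alpha}$ has $\af_Z$-weight $-\alpha$ and each $X_\beta$ has weight $\beta$. After rescaling by $e^{t\alpha(X)}$, the image of $Y_\alpha$ becomes
\[
X_{-\alpha}+e^{t\alpha(X)}D_\alpha+\sum_{\beta}e^{t(\alpha+\beta)(X)}X_\beta,
\]
and the wedge of the rescaled $Y_\alpha$'s (together with a basis of $\lf\cap\hf$) converges in $\Gr_d(\gf)$ to the wedge vector representing $\hf_{\rm lim}=\oline{\uf}+(\lf\cap\hf)$ if and only if, for every $\alpha\in\Sigma_\uf$ contributing to the graph, the extra terms vanish in the limit. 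By construction of $\mathcal{M}$, this is equivalent to $\alpha(X)<0$ whenever $D_\alpha\neq 0$ and $(\alpha+\beta)(X)<0$ whenever $X_\beta\neq 0$, that is, to $\gamma(X)<0$ for every generator $\gamma$ of the monoid $\mathcal{M}$.

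Combining with Lemma \ref{Grass} this identifies $\af_Z^{--}$ with the open polyhedral cone $\{X\in\af_Z\mid \gamma(X)<0,\ \forall\gamma\in\mathcal{M}\}$. To finish, I would take closures: since $\af_Z^{--}$ is non-empty (it contains the image of the negative Weyl chamber, by (\ref{WC}) applied to each irreducible $\pi$ and intersected), the closure of this open polyhedral cone is obtained by replacing strict inequalities with $\leq 0$, yielding exactly $\{X\in\af_Z\mid \gamma(X)\leq 0,\ \forall\gamma\in\mathcal{M}\}$. Since $\af_Z^-$ is by definition the closure of $\af_Z^{--}$, the lemma follows.

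The main obstacle is ensuring that convergence of $e^{t\ad X}\hf$ in the Grassmannian genuinely decouples into the individual one-line limits described above. This requires a careful choice of basis of $\hf$ adapted to the decomposition $\hf=(\lf\cap\hf)\oplus\mathcal{G}(T)$, together with the observation that the rescaling factors $e^{t\alpha(X)}$ produce a global scalar on the associated wedge vector that is irrelevant in projective space; once this bookkeeping is in place, the equivalence with the inequalities defining $\mathcal{M}$ is immediate from comparing $\af_Z$-weights.
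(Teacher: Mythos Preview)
Your proposal is correct and follows essentially the same route as the paper, which proves the lemma in one line by combining Lemma~\ref{dualcone} with Lemma~\ref{Grass}. You have simply unpacked this combination: Lemma~\ref{Grass} reduces to analyzing the $\af_Z$-weight expansion of the $H$-fixed vector in the wedge representation, and Lemma~\ref{dualcone} (applied to that representation) is precisely your computation with the rescaled basis vectors $Y_\alpha$, identifying the relevant weight set with the generators of $\mathcal{M}$. One small correction: $\af_Z^-$ is not \emph{defined} as the closure of $\af_Z^{--}$, but rather $\af_Z^{--}$ is defined as the interior of $\af_Z^-$; since $\af_Z^-$ is a closed convex cone with non-empty interior (as you note via~(\ref{WC})), the two statements are equivalent and your closure step goes through.
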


\begin{rmk} In the case where $Z_\C$ is spherical, the 
compression cone coincides with the so-called valuation cone. 
This follows from Lemma \ref{explicitcone}
in combination with \cite{Brion}, Cor.~2.4.   
\end{rmk}

\subsection{Refined Polar decomposition}

\par Set $A_Z^-:=\exp{\af_Z^-}\subseteq A/A_H$. 

\begin{prop} \label{wavefront}  Let $Z=G/H$ be  a real spherical space with 
$H=J$ and simple compactification $\hat Z$ as in Section \ref{pode}. 
Then the map 
$$\Phi: K\times \overline{A_Z^{-}F\cdot z_0} \to \hat Z, \ \ (k,s) \mapsto k\cdot s$$
is open in $(\1,\hat z)$. 
In particular one has $G= F' K A_Z^- FH$ for a finite set $F'\subset G$. 
\end{prop}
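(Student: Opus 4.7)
The plan is to refine Lemma \ref{lemma2}, which gave openness of $\tilde\Phi : K \times S_Z \to \hat Z$ at $(\1, \hat z)$, by showing that already the smaller set $K \cdot \overline{A_Z^- F \cdot z_0}$ covers a neighborhood of $\hat z$ in $\hat Z$. The geometric picture is that in the slice near $\hat z$, only the compression directions approach $\hat z$, while the transverse compact factor $D_c$ is absorbed into $K$. As in Lemma \ref{lemma1}, I would choose $\theta$ with $L$ being $\theta$-stable, so that $D_c \subset M \subset K$, and use the projective embedding $\hat Z\hookrightarrow \mathbb{P}(V_f)$, under which $\hat z = [v_{-\lambda}]$ and $v_H = \sum_{\nu\in\Lambda_\pi} v_{-\lambda+\nu}$ in the notation of Lemma \ref{dualcone}.

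The heart of the argument is the following claim: there exists an open neighborhood $V_0$ of $\hat z$ in $\hat Z$ with
$$V_0 \cap S_Z \cap Z \,\subset\, D_c\, A_Z^-\, F \cdot z_0.$$
By Corollary \ref{corloc}, every point $p \in S_Z \cap Z$ has the form $p = d\, a\, t_j \cdot z_0$ with $d \in D_c$, $a \in A_Z$ and $j\in\{1,\dots,m\}$. Since $D_c$ is compact and fixes $\hat z$, closeness of $p$ to $\hat z$ transfers to $a t_j \cdot z_0 = d^{-1} p$. Writing $[a t_j v_H] = [\sum_\nu (at_j)^\nu v_{-\lambda+\nu}]$ and using that $|t_j^\nu| = 1$ (as $t_j\in \exp(i\af_Z)$), projective convergence to $[v_{-\lambda}]$ forces $|a^\nu| \to 0$ for every $\nu \in \Lambda_\pi\setminus\{0\}$, so $\log a \in \af_Z^{--}\subset \af_Z^-$ by Lemma \ref{dualcone}. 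For $\log a$ in a bounded set, $a t_j z_0$ is pinned to a compact subset disjoint from a neighborhood of $\hat z$; and for $\log a$ unbounded in a direction outside $\af_Z^{--}$, the orbit $A_Z t_j z_0$ accumulates at a different boundary point. Both scenarios are excluded by making $V_0$ small.

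Granted the claim, the density of $Z$ in $\hat Z$ yields
$V_0 \cap S_Z \subset \overline{D_c A_Z^- F \cdot z_0} = D_c \cdot \overline{A_Z^- F \cdot z_0} \subset K \cdot \overline{A_Z^- F \cdot z_0}$.
Applying Lemma \ref{lemma2} to the open neighborhood $V_0 \cap S_Z$ of $\hat z$ in $S_Z$ shows that $K \cdot (V_0 \cap S_Z)$ is a neighborhood of $\hat z$ in $\hat Z$, and since $K \cdot (V_0 \cap S_Z) \subset K \cdot K \cdot \overline{A_Z^- F\cdot z_0} = K \cdot \overline{A_Z^- F\cdot z_0}$, openness of $\Phi$ at $(\1,\hat z)$ follows.

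For the polar decomposition, $\mathcal{U} := K \cdot \overline{A_Z^- F\cdot z_0}$ is left $K$-invariant and contains a neighborhood of $\hat z$, hence of the compact closed orbit $Y = K\cdot \hat z$. Repeating the compactness argument at the end of the proof of Lemma \ref{lemma1}, finitely many $G$-translates $g_1\mathcal{U},\dots,g_N\mathcal{U}$ cover $\hat Z$, and passing to $Z$ and to the group level gives $G = F' K A_Z^- F H$ with $F' = \{g_1,\dots,g_N\}$. The main obstacle is the core claim: it requires a quantitative use of the characterization of the compression cone in Lemma \ref{dualcone}, handling simultaneously the bounded $D_c$-action and the escape of $A_Z$ to infinity, in order to truly pin down directions to $\af_Z^-$ rather than just read off a limit.
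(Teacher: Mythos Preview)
Your proposal is correct and follows essentially the same route as the paper's proof. The paper argues in one line that, by the very definition of the compression cone (via Lemma~\ref{lemma INDEPENDENCE}), the set $T_Z(A_Z\setminus A_Z^-)\cdot z_0$ misses a neighborhood of $\hat z$, hence $\overline{MA_Z^- F\cdot z_0}$ is a slice neighborhood of $\hat z$; then Lemma~\ref{lemma2} and the compactness argument from Lemma~\ref{lemma1} finish. Your weight-coordinate computation $[at_j v_H]=[\sum_\nu (at_j)^\nu v_{-\lambda+\nu}]$ with $|t_j^\nu|=1$ is exactly the unpacking of that one line, and your passage through $D_c\subset K$ and the density step to go from $S_Z\cap Z$ to $S_Z$ match the paper's use of $M$ and closure; there is no genuine gap in your ``core claim'', and your hedging at the end is unnecessary.
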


\begin{proof}  Let $(\pi, V)$ be an $H$-semi-spherical representation out of which 
we constructed $\hat Z \subset{\mathbb P} (V)$. 
We use that $\af_Z^-$ is the compression cone of $\pi$, see Lemma \ref{lemma INDEPENDENCE}
and the discussion below it.
Note that  $z_0 = [v_H]$ and 
$\hat z = [v_0]$. We claim that  $\overline{MA_Z^{-}F\cdot z_0}$ is a neighborhood of $\hat z$ in the slice. 
In fact, by the definition of the compression cone we see that $ T_Z (A_Z\bs A_Z^-)\cdot z_0$ does not 
meet a neighborhood of $\hat z$. The claim follows. 
\par By Lemma \ref {lemma2} the map $\Phi$  is open and 
the assertion follows as in the proof of Lemma \ref{lemma1}.
\end{proof}

As in Section \ref{pode} we obtain as a corollary: 

\begin{theorem} {\rm(Fine Polar decomposition)} \label{generic RPD}  
Suppose that $Z=G/H$ is a real spherical space.  
Then there exists a compact subset $\Omega\subset G$ and a finite set $F''\subset G$ 
such that 
$$G= \Omega A_Z^-  F'' H\,  .$$
Moreover, one has:
\begin{enumerate}
\item $\Omega= F' K$ for some finite set $F'\subset G$.
\item $F''\subseteq FN_G(H)$.
\end{enumerate}
\end{theorem}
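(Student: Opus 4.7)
The plan is to deduce Theorem \ref{generic RPD} from Proposition \ref{wavefront} by the same global covering argument used in the proof of the coarser Polar Decomposition in Section \ref{pode}, but with the open set produced by $\overline{A_Z^{-}F\cdot z_0}$ rather than $S_Z$. The role of Lemma \ref{lemma2} is taken by Proposition \ref{wavefront}, which already builds the compression cone into the openness statement at the $\oline Q$-fixed point $\hat z$.

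First I would reduce to the case $H=J$ exactly as at the beginning of the proof of Lemma \ref{lemma1}. Using $N_G(\hf)_0\subset MA_ZH$ from \cite{KKS}, Prop.~4.2, we may assume $\hf$ is self-normalizing; then $N_G(N_G(\hf))=N_G(\hf)$ and Theorem \ref{J-cpct} lets us further replace $H$ by $J$, at the cost of a finite factor that absorbs into $F''$ and contributes the inclusion $F''\subseteq F N_G(H)$ claimed in (2). Note that under this reduction the compression cone $\af_Z^-$ is unchanged, since passing to $N_G(H)$ only modifies $\af$ inside the edge $E(\af_Z^-)$ (Lemma \ref{edge lemma}), and passing from $N_G(H)$ to $J$ involves only the compact direction $\df_c$, which is transverse to $\af_Z$.

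Having reduced to $H=J$, I build a simple compactification $\hat Z=\hat Z_f$ as in Theorem \ref{exists simple compactification}. Applying Proposition \ref{wavefront} gives a neighborhood $\mathcal{U}\subset\hat Z$ of $\hat z$ contained in $K\cdot \oline{A_Z^{-}F\cdot z_0}$. Because $D_c\subset K$ and $\hat z$ is $D_c$-fixed, $\mathcal{U}$ can be chosen to be a $D_c$-invariant (hence also $A_Z^-$-saturated through the slice) neighborhood of the unique closed orbit $Y$. Since $Y$ is the unique closed $G$-orbit of the compact space $\hat Z$, every $G$-orbit has $Y$ in its closure, so $G\cdot \mathcal{U}=\hat Z$; compactness then yields finitely many elements $g_1,\dots,g_N\in G$ with $\bigcup_{j=1}^{N} g_j K \oline{A_Z^{-}F\cdot z_0}=\hat Z$. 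Intersecting with $Z$ and using Corollary \ref{corloc} (namely $S_Z\cap Z\subseteq D_c A_Z F\cdot z_0$, refined on $\oline{A_Z^-F\cdot z_0}\cap Z$ to $D_c A_Z^- F\cdot z_0$) gives
\[
G \;=\; \{g_1,\dots,g_N\}\, K\, A_Z^{-}\, F\, H,
\]
which is the decomposition with $\Omega=F'K$, $F'=\{g_1,\dots,g_N\}$, establishing (1).

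The only step that requires care is the passage from the containment of $\oline{A_Z^{-}F\cdot z_0}\cap Z$ in $D_c A_Z^- F\cdot z_0$ back to $Z$ itself, because one must ensure that the $D_c$ factor can be absorbed into $K$ rather than enlarging $F$; this is handled exactly as in the proof of Lemma \ref{lemma1} by taking the $D_c$-saturation of the slice inside $K$. After that, combining this global statement with the reduction step, the extra finite set produced by passing from $H$ to $J$ is absorbed into $F''$ and satisfies $F''\subseteq F N_G(H)$, yielding (2). I expect no real obstacle beyond bookkeeping here, since the hard analytic content (openness at $\hat z$, and the identification of the compression cone with the local geometry of the slice near the closed orbit) is already contained in Proposition \ref{wavefront} and Lemma \ref{lemma INDEPENDENCE}.
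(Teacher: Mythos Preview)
Your proposal is correct and follows essentially the same route as the paper: the paper simply says ``As in Section \ref{pode} we obtain as a corollary,'' meaning that one repeats the reduction-and-covering argument of Lemma \ref{lemma1} with Proposition \ref{wavefront} replacing Lemma \ref{lemma2}, which is exactly what you have spelled out. Your extra care in checking that $\af_Z^-$ is unaffected by the passage $H\to N_G(H)\to J$ (via Lemma \ref{edge lemma} and the compactness of $\jf/\nf_\gf(\hf)$) is a detail the paper leaves implicit but is indeed needed for the argument to go through.
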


\begin{ex} In many examples it appears that $F''$ can be taken in $N_G(H)$. 
However it cannot be skipped completely as 
the basic example $Z= \Sl(2,\R)/\SO(1,1)$ shows. Let $A$ be the diagonal 
matrices in $G=\Sl(2,\R)$ with positive entries and $K=\SO(2,\R)$.  
We realize $Z$ as the one sheeted hyperboloid in $\R^3$:
$$ Z= \{ (x,y,z)\in \R^3 \mid x^2 + y^2 -z^2 =1\}$$ 
with base point $z_0=(1,0,0)$. Then 
$$A_Z^-\cdot z_0=A^-\cdot z_0 =\{ (x,0,z)\in Z \mid x,z\geq 0\}\,.$$
Hence it it is not possible that $Z=\Omega A_Z^-\cdot z_0$ holds for 
a compact set $\Omega\subset G$. 
Here $N_G(H)_0 =H$ and the quotient $N_G(H)/H$ is realized by the involutive 
element $w=\begin{pmatrix} 0 &1 \\ -1 & 0 \end{pmatrix} 
\in K$ which satisfies $wA_Z^-w= A_Z^+$. 
With $\Omega=K$ one then has $G=KA_ZH$. 
\end{ex}

\section{Wavefront spherical spaces and the wavefront lemma}

Denote by $\af^-$ the closure of the negative Weyl chamber $\af^{--}$.  
The following definition is motivated by  \cite{SV}. 

\begin{definition}\label{defi wavefront}  We call the real spherical space $Z=G/H$ 
{\it wavefront} provided that 
\begin{equation} \label{wf} \af_Z^{-} = (\af^- +\af_H)/\af_H\, .\end{equation}                  
\end{definition}

\begin{rmk} All symmetric spaces are wavefront. Moreover, if $G$ and $H$ are complex then one can 
decide with  the Luna diagram whether $Z=G/H$ is wavefront. For example all 
complex spherical spaces of the type $Z=G\times H / \diag (H)$ are wavefront.  A few others, 
such as $\SO(2n+1,\C)/ \GL(n,\C)$,  $\GL(2n+1, \C)/ {\rm Sp}(2n, \C)$ or $\SO(8,\C)/ G_2$ are 
not wavefront. To be precise:  from the 78 cases in the list of \cite{BraPez}, the non-wavefront 
cases are: (11), (24), (25), (27), (39-50), (60) and (61).
\end{rmk}

Denote by $A^-$ the closure of the negative Weyl chamber with respect to the positive system determined by $N$. 
Notice that (\ref{wf}) implies that 
\begin{equation}\label{AZ}
A^-_Z\cdot z_0 = A^-\cdot z_0.
\end{equation}

With that we obtain a generalization of 
 the ``wavefront lemma'' of Eskin-McMullen 
(\cite{EM} Theorem 3.1). The technique of the proof is essentially known -- see for instance
\cite{SV} or \cite{KSS}.   

\begin{lemma}\label{wfl} Suppose that $Z=G/H$ is a wavefront real spherical space. 
Then there exists a closed subset $E\subset G$  with the following properties.
\begin{enumerate}
\item $E\to G/H$ is surjective.
\item For  every  neighborhood ${\mathcal V}$ of $\1$ in $G$, there exists 
a neighborhood ${\mathcal U}$ of $\1$ in $G$ such that
$$ {\mathcal V}g\cdot z_0 \supset g{\mathcal U}\cdot z_0$$
for all $g\in E$.
\end{enumerate}
\end{lemma}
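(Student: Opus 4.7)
The plan is to exploit the fine polar decomposition (Theorem \ref{generic RPD}) together with the wavefront assumption (\ref{AZ}) to define $E$ explicitly, and then to reduce property (2) via compactness and finiteness arguments to a single uniform local estimate controlled by the contraction properties of $A^-$.

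First I would set $E := \Omega \cdot A^- \cdot F''$, where $\Omega$ and $F''$ are as in Theorem \ref{generic RPD}. Property (1) is then immediate: Theorem \ref{generic RPD} gives $G = \Omega A_Z^- F'' H$, and the wavefront hypothesis (\ref{AZ}) asserts $A^-\cdot z_0 = A_Z^-\cdot z_0$, so $E\cdot z_0 = \Omega A_Z^- F''\cdot z_0 = G\cdot z_0 = Z$. Note that $E$ is closed since $\Omega$ is compact, $A^-$ is closed in $A$, and $F''$ is finite.

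For property (2), I would proceed by successive reductions. Given a neighborhood $\V$ of $\1$, compactness of $\Omega$ furnishes a neighborhood $\V_1$ of $\1$ with $\omega\V_1\omega^{-1}\subseteq \V$ for every $\omega\in\Omega$, which reduces the task to producing $\U$ with $af\U\cdot z_0 \subseteq \V_1 af\cdot z_0$ for all $a\in A^-$ and $f\in F''$. Since $F''$ is finite, replacing $\U$ by $\bigcap_{f\in F''} f^{-1}\U f$ (still a neighborhood of $\1$), I may further reduce to the core assertion: given a neighborhood $\V_1$ of $\1$ and a base point $z_f := f\cdot z_0$ attached to an open $P$-orbit, find a neighborhood $\U_f$ of $\1$ such that $a\U_f\cdot z_f \subseteq \V_1\cdot(a z_f)$ for every $a\in A^-$.

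For this key uniform estimate I would use the local structure theorem. By (\ref{deco2}) applied at the base point of each open $P$-orbit one obtains the direct sum $\gf = \Ad(f)\hf \oplus \uf \oplus \af_Z \oplus \mf_Z$, so a sufficiently small neighborhood of $z_f$ in $Z$ is parametrized by $X\mapsto \exp(X)\cdot z_f$ for $X$ in a small neighborhood $\mathcal{O}$ of $0$ in $\uf\oplus\af_Z\oplus\mf_Z$. Then
\[ a\exp(X)\cdot z_f = \exp\bigl(\Ad(a)X\bigr)\cdot (a z_f), \]
and the crucial observation is that $\Ad(a)$ stabilizes each of the three transversal summands: it contracts $\uf$ because $a\in A^-$ (so $a^\alpha\le 1$ for $\alpha\in\Sigma_\uf$), it fixes $\af_Z$ pointwise, and it fixes $\mf_Z$ pointwise because $\mf_Z\subseteq\mf$ and $M=Z_{K_L}(A)$ centralizes $A$. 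Hence $\Ad(a)X$ stays in a bounded neighborhood of $0$ in the transversal uniformly in $a\in A^-$, and by shrinking $\mathcal{O}$ we may guarantee $\exp(\Ad(a)X)\in\V_1$. Assembling the reductions yields (2).

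The main obstacle I anticipate is the verification that the transversal decomposition $\gf=\Ad(f)\hf\oplus\uf\oplus\af_Z\oplus\mf_Z$ holds for every $f\in F''$ with the \emph{same} complementary subspaces $\uf,\af_Z,\mf_Z$; this is the content of the local structure theorem along the full slice $S_Z$ (Proposition \ref{localcoo} and Corollary \ref{corloc}), but invoking it cleanly so that the contraction estimate is truly uniform across the finitely many base points is the delicate bookkeeping point in the argument.
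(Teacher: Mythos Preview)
Your proposal is correct and the overall architecture---define $E=\Omega A^-F''$, get (1) from Theorem~\ref{generic RPD} and (\ref{AZ}), reduce (2) by compactness of $\Omega$ and finiteness of $F''$---matches the paper exactly. The difference lies in the core uniform estimate for $a\in A^-$.

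You work with the explicit transversal $\uf\oplus\af_Z\oplus\mf_Z$ from (\ref{deco2}), observe that $\Ad(a)$ is uniformly bounded on it for $a\in A^-$, and then must verify that this \emph{same} transversal works at every base point $f\cdot z_0$, $f\in F''$. That verification is genuine (it does follow from Proposition~\ref{localcoo} and Corollary~\ref{corloc}, since $L_n$ fixes the slice pointwise and the $D$-stabilizer Lie algebra is unchanged along $S_Z'$), but it is the bookkeeping you rightly flagged as the delicate point. The paper sidesteps this entirely: it simply chooses a neighborhood $\mathcal{U}_1$ of $\1$ \emph{inside $P$} which is stable under conjugation by $A^-$ (such neighborhoods exist because $A^-$ contracts $N$ and centralizes $MA$), and uses only the qualitative fact that $PfH$ is open for each $f\in F''$, so that $f^{-1}\mathcal{U}_1 f\cdot z_0$ is a neighborhood of $z_0$. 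Then the chain $af\mathcal{U}\cdot z_0\subset a\mathcal{U}_1 f\cdot z_0\subset \mathcal{U}_1 af\cdot z_0\subset\mathcal{V}af\cdot z_0$ finishes the proof without ever invoking the fine structure of the transversal at the shifted base points. Your route is more explicit about which directions are contracted and which are fixed; the paper's is shorter and needs less input.
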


\begin{proof} Put  $$E=\Omega A^-F''.$$
Then (1) follows from (\ref{AZ}) and
Corollary \ref{generic RPD}.

The proof of (2) is similar to \cite{KSS}, Lemma 5.4. For the convenience to the 
reader we recall the argument. 

\par For a compact set $\Omega\subset G$ we note that the set $\bigcap_{x\in \Omega} \Ad(x^{-1}) \mathcal{V}$
is a neighborhood of ${\bf 1}$ in $G$. Then the assertion is 
reduced to the case 
where $g\in A^-F''$.  

\par Let ${\mathcal U}_1$ be a neighborhood of ${\bf 1}$ in $P$
which is contained in $\mathcal {V}$ and which is
stable under conjugation 
by elements from $A^-$.
As $P fH$ is open for all $f \in F''$, we see that $f^{-1} {\mathcal U}_1f \cdot z_0$ is a
neighborhood of $z_0$ for each $f\in F''$.
We choose ${\mathcal U}$ so small that 
$${\mathcal U}\cdot z_0\subset f^{-1} {\mathcal U}_1 f \cdot z_0$$
holds for all $f\in F''$. 
Then for $g=af$ with $a\in A^-$
we obtain 
$$af{\mathcal U}\cdot z_0 \subset a  {\mathcal U}_1 f \cdot z_0
\subset {\mathcal U}_1 af \cdot z_0\subset {\mathcal V}af \cdot z_0\,.$$
\end{proof}

\begin{rmk} Lemma \ref{wfl} suggests that real spherical spaces  
which are wavefront are especially suited to discuss the lattice counting problem, 
see \cite{EM}. Having developed the harmonic analysis on real spherical spaces 
further one can obtain error term bounds for the lattice counting problem. 
We will return to this topic  in an upcoming publication. 
\end{rmk}

\end{document}